\renewcommand\ge\geqslant
\renewcommand\geq\geqslant
\renewcommand\le\leqslant
\renewcommand\leq\leqslant
\renewcommand\propto\varpropto
\newcommand{\defeq}{:=}
\newtheorem{theorem}{\bf Theorem}[section]
\newtheorem{corollary}{\bf Corollary}[section]
\theoremstyle{definition}
\newtheorem*{example*}{Example}
\newtheorem*{definition*}{Definition}
\theoremstyle{remark}
\newtheorem*{remark*}{Remark}
\newcommand*\xbar[1]{%
  \hbox{%
    \kern0.1em
    \vbox{%
      \hrule height 0.3pt 
      \kern0.35ex
      \hbox{%
        \kern-0.1em
        \ensuremath{#1}%
        \kern-0.1em
      }%
    }%
    \kern0.1em
  }%
} 
\begin{document}

\title{\LARGE\bf Legendre functions of fractional degree: transformations and evaluations}
\author{Robert S. Maier\footnote{Departments of Mathematics and Physics,
University of Arizona, Tucson, Arizona 85721;  
email: rsm@math.arizona.edu; phone: +1 520 621 2617; fax: +1 520 621 8322.}}
\date{}

\maketitle

\abstract{
Associated Legendre functions of fractional degree appear in the solution
of boundary value problems in wedges or in toroidal geometries, and
elsewhere in applied mathematics.  In the classical case when the degree is
half an odd integer, they can be expressed using complete elliptic
integrals.  In this study, many transformations are derived, which reduce
the case when the degree differs from an integer by one-third, one-fourth
or one-sixth to the classical case.  These transformations, or identities,
facilitate the symbolic manipulation and evaluation of Legendre and Ferrers
functions.  They generalize both Ramanujan's transformations of elliptic
integrals and Whipple's formula, which relates Legendre functions of the
first and second kinds.  The proofs employ algebraic coordinate
transformations, specified by algebraic curves.
}



\section{Introduction}
\label{sec:1}
In applied mathematics and theoretical physics, Legendre or associated
Legendre functions occur widely.  Their properties are summarized in many
places~\cite{Olver2010,Erdelyi53}.  The most familiar are ${\rm
  P}_\nu^\mu(z),\allowbreak{\rm Q}_\nu^\mu(z)$, which are defined if
$z\in(-1,1)$, or more generally on the complex $z$-plane with cuts
$(-\infty,-1]$ and~$[1,\infty)$.  Mathematicians call these `Ferrers
    functions' and reserve the term `Legendre functions' for the
    typographically distinct ${P}_\nu^\mu(z),\allowbreak{Q}_\nu^\mu(z)$
    that are defined if $z\in(1,\infty)$, or more generally on the
    $z$-plane with cut~$(-\infty,1]$.  All four functions, said to be of
  degree~$\nu$ and order~$\mu$, satisfy the same second-order ordinary
  differential equation with parameters $\nu$ and~$\mu$, and in the absence
  of cuts may be multi-valued.


The so-called Legendre polynomials ${\rm P}_n^m(z),{\rm Q}_n^m(z)$, which
are really Ferrers functions of integral degree~$n$ and integral order~$m$,
are the most familiar.  For ${\rm P}_n^0(z)$ in~particular, usually written
${\rm P}_n(z)$, no~cuts are required for single-valuedness, and in~fact
${\rm P}_n$ equals~${P}_n$.  Spherical harmonics
$Y_n^m(\theta,\phi)\propto\allowbreak {\rm P}_n^m(\cos\theta){\rm e}^{{\rm
    i}m\phi}$ arise naturally when separating variables in the Laplace and
Helmholtz equations, and are widely used for expansion purposes.
(Of~course `polynomial' is a misnomer; if $m$~is odd, ${\rm P}_n^m(z)$
includes a $\sqrt{1-z^2}$ factor, which appears in~${P}_n^m(z)$
as~$\sqrt{z^2-1}$.)  The use of full ranges for the coordinates, i.e.,
$0\le\theta\le\pi$ and $0\le\phi\le2\pi$, is what causes the quantization
of the degree and order to integers.  It should be noted that Ferrers
functions of half-odd-integer degree and order find application in quantum
mechanics~\cite{Hunter99}.  Like the Legendre polynomials, they are
elementary functions of~$z$.

Ferrers or Legendre functions in which the degree~$\nu$ is a
half-odd-integer and the order is an integer are not elementary but can be
expressed in~terms of the complete elliptic integrals $K=K({\rm m})$,
$E=E({\rm m})$, where ${\rm m}$ (often denoted~$k^2$) is the elliptic
modular parameter.  (For instance, ${\rm P}_{-1/2}(z)$ equals
$(2/\pi)K((1-z)/2)$.)  Ferrers or Legendre functions of \emph{unrestricted}
degree $\nu\notin\mathbb{Z}$ appear in many contexts, such as two Fourier
expansions in the azimuthal coordinate~$\phi$:
\begin{subequations}
    \begin{align}
\left[\cos\theta +{\rm i} \sin\theta\sin\phi\right]^\nu &= 
\sum_{m=-\infty}^\infty \frac{\Gamma(\nu+1)}{\Gamma(\nu+m+1)}\,
{{\rm P}_\nu^m}(\cos\theta)\, {\rm e}^{{\rm i}m\phi},
\label{eq:1a}
\\
\left[\cosh\xi + \sinh\xi\cos\phi\right]^\nu &= 
\sum_{m=-\infty}^\infty \frac{\Gamma(\nu+1)}{\Gamma(\nu+m+1)}\,
{P_\nu^m}(\cosh\xi)\, {\rm e}^{{\rm i}m\phi}.
\label{eq:1b}
    \end{align}
\end{subequations}
In~(\ref{eq:1a}), the left side is a generalization to arbitrary~$\nu$ of a
standard generating function for spherical harmonics \cite[Chap.~VII,
  \S\,7.3]{Courant53}.  Equation~(\ref{eq:1b}) is a `generalized Heine
identity' which has recently attracted attention~\cite{Cohl2011}, and can
be viewed as an analytic continuation of~(\ref{eq:1a}).  In the case
$\nu=-1/2$, it leads to an alternative to the usual multipole expansion of
the $1/\left|{\bf x}-{\bf x}'\right|$ potential~\cite{Cohl2000,Cohl2001}.
Equation~(\ref{eq:1b}) also appears in celestial mechanics, in the analysis
(originally) of planetary perturbations~\cite{Murray99}.  The coefficient
of the mode $\cos(m\phi)$ in the Fourier development of a `disturbing
function' $(1+\alpha^2 - 2\alpha\cos\phi)^{-s}$, where $s>0$ is a
half-odd-integer, is denoted $b_s^{(m)}(\alpha)$ and called a Laplace
coefficient.  By~(\ref{eq:1b}), it can be expressed in~terms of the
Legendre functions~${P}_{-s}^{\pm m}$, and thus in~terms of complete
elliptic integrals.

Legendre (rather than Ferrers) functions with $\nu$ a half-odd-integer and
$\mu$ an integer are commonly called toroidal or `anchor ring' functions,
since harmonics including factors of the form
$P_{n-1/2}^m(\cosh\xi),\allowbreak Q_{n-1/2}^m(\cosh\xi)$ appear when
solving boundary value problems in toroidal domains, upon separating
variables in toroidal coordinates~\cite{Selvaggi2007,Beleggia2009}.  The
efficient calculation of values of toroidal functions, employing
recurrences or other numerical schemes, is well
understood~\cite{Fettis70,Gil2000}.  There is also a literature focusing on
Laplace coefficients, both classical~\cite{Zeipel12} and recent, which
makes contact with hypergeometric expansions.

\smallskip
\emph{Overview of results.}---It is shown that Legendre and Ferrers
functions of any degree~$\nu$ differing from an integer by $\pm1/r$, for
$r=3,4,6$, can be expressed in~terms of like functions of half-odd-integer
degree.  (The order here must be an integer.)  This statement, which leads
to unexpected closed-form expressions in~terms of complete elliptic
integrals, is one consequence of the main results, the large collection of
Legendre identities in \S\,\ref{sec:mainresults}, which facilitate the
rewriting and evaluation of ${P}_{-1/r}^{-\alpha}(\cosh\xi),\allowbreak
{Q}_{-1/r}^{-\alpha}(\cosh\xi)$ and ${\rm
  P}_{-1/r}^{-\alpha}(\cos\theta),\allowbreak {\rm
  Q}_{-1/r}^{-\alpha}(\cos\theta)$, with $\alpha\in\mathbb{C}$ arbitrary.
(The case when $\nu$~differs from $-1/r$ or~$+1/r$ by a non-zero integer is
handled by applying well-known differential recurrences, to shift the
degree.)  The most striking identities may be
\begin{subequations}
\begin{align}
P_{-1/4}^{-\alpha}(\cosh\xi) &= 2^\alpha\sqrt{\mathrm{sech}(\xi/2)}\:{\rm P}_{\alpha-1/2}^{-\alpha}(\mathrm{sech}(\xi/2)),\label{eq:2a}\\
P_{-1/6}^{-\alpha}(\cosh\xi) &= 3^{3\alpha/2}\,\sqrt[4]{\frac{3\sinh(\xi/3)}{\sinh\xi}}\:
{\rm P}_{2\alpha-1/2}^{-\alpha}\left(
\sqrt{\frac{3\sinh(\xi/3)}{\sinh\xi}}\,\cosh(\xi/3)
\right)
,
\label{eq:2b}
\end{align}
\end{subequations}
which hold if $\alpha\in\mathbb{C}$ and $\xi\in(0,\infty)$.  As in all
Legendre identities derived below, the function arguments on the left and
right sides (here trigonometrically parametrized) are algebraically
related.  These two are of special importance because (\ref{eq:1b})~can be
rewritten as
\begin{equation}
  (1+x\cos\phi)^\nu = 
(1-x^2)^{-\nu/2}\!
\sum_{m=-\infty}^\infty \frac{\Gamma(\nu+1)}{\Gamma(\nu+m+1)}\,
{P_\nu^m}(1/\sqrt{1-x^2})\, {\rm e}^{{\rm i}m\phi}
,
\end{equation}
with $x=\tanh\xi$ satisfying $x\in(0,1)$.  This is the little-known Fourier
expansion of $(1+x\cos\phi)^\nu$ (cf.~\cite{Cohl2011,Conway2007}).  It
follows that if $\nu=-1/4$ or $\nu=-1/6$, or more generally if
$\nu$~differs from an integer by $\pm1/r$ with $r=3,4,6$ (as~well as the
classical case $r=2$), the Fourier coefficients of $(1+x\cos\phi)^\nu$ can
be expressed in~terms of complete elliptic integrals.  This too is
unexpected.

Identities such as (\ref{eq:2a}),(\ref{eq:2b}) and the full collection
in~\S\,\ref{sec:mainresults} are closely related to certain function
transformations of Ramanujan.  In his famous notebooks
\cite[Ch.~33]{BerndtV}, he developed a theory of elliptic integrals with
non-classical `signature' $r=3,4,6$, and related them to the classical
integrals, which have signature~$2$.  His theory yields formulas for the
Ferrers functions ${\rm P}_{-1/r}$ in~terms of ${\rm P}_{-1/2}$, or
equivalently the classical complete elliptic integral~$K$, with an
algebraically transformed argument.  (For a compact list of Ramanujan's
transformation formulas that can be written in this way, see
\cite[Lemma~2.1]{Zhou2014}.)  The identities derived here include several
of Ramanujan's rationally parametrized formulas, but such identities as
(\ref{eq:2a}),(\ref{eq:2b}) are more general, in~that they are formulas for
Ferrers or Legendre functions of arbitrary (i.e., non-zero)
order~$-\alpha$.  The parametrization by trigonometric functions is another
novel feature.

\smallskip
\emph{Methods.}---The technique used below for deriving Legendre identities
was developed by considering Whipple's well-known $Q\leftrightarrow P$
transformation formula~\cite[3.3(13)]{Erdelyi53},
\begin{multline}
\label{eq:whipple0}
  Q_\nu^\mu\left((p^2+1)/(p^2-1)\right) = {\rm e}^{\mu\pi{\rm
      i}}\sqrt{\pi/2}\:\Gamma(\nu+\mu+1)\,
  \sqrt{(p^2-1)/2p}\:
  P_{-\mu-1/2}^{-\nu-1/2}\left((p^2+1)/2p\right),
\end{multline}
which holds if $p\in(1,\infty)$.  Whipple's proof of~(\ref{eq:whipple0}),
given in~\cite{Whipple16}, contains the germ of a broadly applicable
method.  It relies on the arguments on left and right,
$L=(p^2+\nobreak1)/\allowbreak(p^2-\nobreak1)$ and $R=(p^2+\nobreak1)/2p$,
being algebraically related by
$(L^2-\nobreak1)\allowbreak(R^2-\nobreak1)=1$.  This relation defines
an algebraic curve, which is parametrized by~$p$, though it could also be
parametrized as $L=z$ and $R=z/\sqrt{z^2-\nobreak1}$, or trigonometrically
as $L=\coth\xi$ and $R=\cosh\xi$, as is common in the literature.

The point is that the correspondence $L\leftrightarrow R$ is an algebraic
change of the independent variable, which leaves Legendre's differential
equation invariant.  What this means is that if $\mathcal{E}_L$~denotes the
second-order differential equation satisfied by the left side as a function
of~$p$, and as~well by the left side with $Q_\nu^\mu$ replaced
by~$P_\nu^\mu$, and if $\mathcal{E}_R$~denotes its counterpart coming from
the right side; then, $\mathcal{E}_L,\mathcal{E}_R$ will be the same.  Once
one has verified this, to prove~(\ref{eq:whipple0}) one needs only to check
that the left and right sides are the \emph{same} element of the
(two-dimensional) solution space of $\mathcal{E}_L=\mathcal{E}_R$.  This
can be confirmed by examining their asymptotic behavior near singular
points.  The many Legendre identities appearing
in~\S\,\ref{sec:mainresults}, relating Legendre and Ferrers functions of
degree $\nu=-1/r$, $r=3,4,6$, to those of other degrees, are all derived in
a similar way, from algebraic curves.

\smallskip
\emph{Applications.}---Legendre functions of fractional degree occur in
many areas of applied mathematics.  One lies in mathematical physics: the
representation theory of certain Lie algebras~\cite{Durand2003b}.  Another
is geometric-analytic: the spectral analysis of Laplacian-like operators on
spaces of negative curvature, which is of interest because of its
connection to quantum chaos~\cite{Gutzwiller85}.  If $\Delta_{\rm LB}$
denotes the Laplace--Beltrami operator on the real hyperbolic
space~$\mathbb{H}^n$, the associated Green's function $(-\Delta_{\rm
  LB}+\nobreak\kappa^2)^{-1}({\bf x}, {\bf x}')$ will be proportional to
$(\sinh d)^{1-n/2}Q_\nu^{n/2-1}(\cosh d)$, where $d$~is the hyperbolic
distance between~${\bf x},{\bf x}'$ and the degree~$\nu$ depends
on~$\kappa^2$.  If~${\kappa^2=0}$, then $\nu=n/2-\nobreak1$; and more
generally, $\nu$~equals $\sqrt{(n-1)^2/4+\kappa^2}-\frac12$.
(See~\cite{Grosche92,Mastumoto2001}, and \cite{Cohl2012} for the
$\kappa^2=0$ case.)  It follows that this Green's function
on~$\mathbb{H}^n$ can be written in~terms of complete elliptic integrals
for an infinite, discrete set of values of the `energy'
parameter~$-\kappa^2<0$.

Another notable application area is the Tricomi problem, which occurs in
two-dimensional transonic potential flow~\cite{Bers58,Rassias90}.  The
Tricomi differential equation on the $\theta$--$\eta$ (i.e.\ hodograph)
plane, $\mathcal{L}u=0$ with $\mathcal{L}=\eta D_\theta^2+\nobreak
D_\eta^2$, has many particular solutions expressible in hypergeometric
functions~\cite[Ch.~XII]{LandauLifschitz6}.  It is not widely appreciated
that the latter are of the special type expressible in terms of Legendre
functions, though it is been observed that many solutions can be obtained
from a fundamental solution (Green's function) of~$\mathcal{L}$ that is
based on~${P}_{-1/6}$ \cite{Lanckau62,Kracht86b}.  In~fact, the so-called
Gellerstedt generalization ${\mathcal{L}}_k=\eta|\eta|^{k-1} D_\theta^2
+\nobreak D_\eta^2$ has a fundamental solution based
on~${P}_{-1/r},{Q}_{-1/r}$, where $r=3,4,6$ for $k=4,2,1$
\cite{BarrosNeto2009}.  Applying such identities as
(\ref{eq:2a}),(\ref{eq:2b}) will express such fundamental solutions
in~terms of complete elliptic integrals.

An additional application area is classical: the separation of variables in
boundary value problems, posed on wedge-shaped domains.  Along this line,
V.~A. Fock~\cite{Fock43} used toroidal coordinates in solving a problem on
a wedge of opening angle~$3\pi/2$, and was led to the fractional-degree
functions $P_{-1/6},Q_{-1/6}$.  More recently, a magnetostatic potential
has been expanded near a cubic corner in modes of the form ${\rm
  P}_\nu^m(\cos\theta){\rm e}^{{\rm i}m\phi} $, where the (irrational)
degree~$\nu$ is corner-specific and known only
numerically~\cite{McKirdy2009}.  If one opening angle of the corner is
increased to~$\pi$, it becomes a right-angled wedge, and the
appropriate~$\nu$ becomes fractional.  In such situations, closed-form
representations like (\ref{eq:2a}),(\ref{eq:2b}) can serve as a check on
numerical work.

In fluid problems on wedges, fractional-degree Ferrers functions ${\rm
  P}_{-1/r},{\rm Q}_{-1/r}$ typically appear in the analysis when the wedge
angle equals $(1-\nobreak \frac1r)\pi$.  This includes problems dealing
with viscous film coating~\cite{Craster94}, solidification~\cite{Hoang98}
and vortex layers~\cite{Crowdy2004}.

\smallskip
\emph{Structure of paper.}---In section~\ref{sec:asymptotics}, the needed
asymptotic behaviors of the Legendre and Ferrers functions are summarized.
Section~\ref{sec:mainresults} contains the main results: the just-mentioned
collection of two dozen identities, or transformation formulas.  In
sections \ref{sec:suppl} and~\ref{sec:derivation}, it is indicated how the
results of \S\,\ref{sec:mainresults} are proved.  The former section is
introductory: it illustrates the method by deriving Whipple's relation and
two similar transformations, one new.  The proof in
section~\ref{sec:derivation} employs distinct algebraic curves when
$r=3,4,6$.  In section~\ref{sec:ellreps}, how to perform integer shifts of
the degree~$\nu$ is explained.  In section~\ref{sec:algebraic}, explicit
formulas are derived, from one of the identities, for ${\rm
  P}_{-1/6}^{-1/4}$, ${P}_{-1/6}^{-1/4}$ and~$Q_{-1/4}^{-1/3}$, each of
which is an elementary (specifically, algebraic) function of its argument.
Finally, section~\ref{sec:curiosity} displays a curiosity: an isolated
identity relating ${\rm P}_{-1/4}^{-1/5}$ to~${\rm P}_{-1/4}^{-1/10}$.

\section{Normalizations and asymptotics}
\label{sec:asymptotics}

The (associated) equation of Legendre is the ordinary differential equation
\begin{equation}
\label{eq:ode}
\frac{{\rm d}}{{\rm d}z}
\left[
(1-z^2) \frac{{\rm d}u}{{\rm d}z}
\right]
 + \left[\nu(\nu+1) - \frac{\mu^2}{1-z^2}\right]u=0
\end{equation}
on the Riemann sphere $\mathbb{C}\cup\{\infty\}$, with degree parameter
$\nu\in\mathbb{C}$ and order parameter $\mu\in\mathbb{C}$.  It is invariant
under $z\mapsto-z$ and under $\mu\mapsto-\mu$ and $\nu\mapsto-1-\nu$.  It
has regular singular points $z=1,-1$ and~$\infty$, with respective pairs of
characteristic exponents $\{\pm\mu/2\}$, $\{\pm\mu/2\}$ and
$\{-\nu,\nu+\nobreak1\}$.  This means that locally, any solution is a
combination of $(z-1)^{\pm \mu/2}$, $(z+1)^{\pm \mu/2}$ and
$(1/z)^{-\nu},(1/z)^{\nu+1}$.  Or rather, this is the generic behavior.  If
the difference between the exponents at any singular point is an integer,
the local behavior of the dominant solution, which comes from the smaller
exponent, will include a logarithmic factor.  This is the source of the
familiar logarithmic behavior when $z\to1$ or~$z\to-1$ of the `polynomials'
${\rm Q}_n^m(z)$ and~${Q}_n^m(z)$ (for integer order~$\mu$, written here
as~$m$).

By convention, the Legendre functions $P_\nu^\mu,Q_\nu^\mu$ are defined so
that when ${\rm Re}\,\mu>0$ and ${{\rm Re}\,\nu>-1/2}$, $P_\nu^{-\mu}$ is
recessive at~$z=1$, and $Q_\nu^\mu$ is recessive at~$z=\infty$.
(See~\cite{Olver97}.)  That is, each is given by a Frobenius series coming
from the larger exponent.  But the question of how best to normalize
$P_\nu^{\mu},Q_\nu^\mu$ (especially the latter) is vexed.  The standard
definition of~$Q_\nu^\mu$ includes factors ${\rm e}^{\mu\pi{\rm i}}$,
$\Gamma(\nu+\mu+1)$ and~$\frac12$, none of which should arguably be
present.  Olver~\cite{Olver97} felt it wise to introduce a new `second
Legendre' function~$\boldsymbol{Q}_{\nu}^\mu$ lacking the first two
factors, so that the standard $Q_\nu^\mu$ equals ${\rm e}^{\mu\pi{\rm
    i}}\Gamma(\nu+\mu+1)\boldsymbol{Q}_{\nu}^\mu$.  In the present paper,
an \emph{ad hoc} function $\widehat Q_\nu^\mu$ that is defined to lack only
the first factor is employed.  That~is, the standard $Q_\nu^\mu$ equals
${\rm e}^{\mu\pi{\rm i}}\widehat Q_{\nu}^\mu$.  Opinion is nearly unanimous that
including the factor ${\rm e}^{\mu\pi{\rm i}}$ in the definition of~$Q_\nu^\mu$
was a mistake, since it may cause $Q_\nu^\mu(z)$ to be non-real even when
$\nu,\mu$ and its argument $z=x>1$ are all real.
(Compare~(\ref{eq:whipple0}).)

The advantage of Olver's $\boldsymbol{Q}_{\nu}^\mu$ is that like
$P_\nu^\mu$ and unlike $Q_\nu^\mu$ (and~$\widehat Q_\nu^\mu$), it is
defined for all $\nu,\mu\in\nobreak\mathbb{C}$; and for any~$z$ not on the
cut $(-\infty,1]$, $\boldsymbol{Q}_{\nu}^\mu(z)$ is an analytic function
  of~$\nu,\mu$.  Also, $\boldsymbol{Q}_{\nu}^\mu$
  and~$\boldsymbol{Q}_{\nu}^{-\mu}$ are identical, much as $P_\nu^\mu$
  and~$P_{-1-\nu}^\mu$ are identical.  Generically, $P_\nu^{-\mu}$
  and~${\boldsymbol{Q}}_\nu^\mu$ are associated respectively with the
  $+\mu/2$ exponent at~$z=1$ and the $\nu+\nobreak1$ exponent
  at~$z=\infty$.  The relevant asymptotics at these `defining' singular
  points are
  \begin{subequations}
  \begin{alignat}{2}
    \label{eq:asympP}
    P_\nu^{-\mu}(z)&\sim\frac1{2^{\mu/2}\Gamma(\mu+1)}\,(z-1)^{\mu/2},&&\qquad z\to1,\\
    {\boldsymbol{Q}}_\nu^{\mu}(z)&\sim\frac{\sqrt{\pi}}{2^{\nu+1}\Gamma(\nu+3/2)}\,(1/z)^{\nu+1},&&\qquad z\to\infty.
  \end{alignat}
  \end{subequations}
These statements are valid if (respectively) $\mu\neq-1,-2,-3,\dots$ and
$\nu\neq-\frac32,-\frac52,-\frac72,\dots$, so that the gamma functions are
finite.  If either gamma function is infinite, the corresponding Legendre
function, when rigorously defined, turns~out to be associated to the other
characteristic exponent: respectively, $-\mu/2$ or~$-\nu$~\cite{Olver97}.
The correct asymptotics in these two degenerate cases are given
in~\cite{Olver2010}.  It should be noted that there are sub-cases of the
degenerate cases in which $P_{\nu}^{-\mu}$,$\boldsymbol{Q}_{\nu}^\mu$ are
\emph{identically zero}.  Specifically, if $M$ equals $1,2,3,\dots$ then
$P_{-M}^M,\dots,P_{M-1}^M\equiv0$; and if $N=1,2,3,\dots$ then
${\boldsymbol{Q}}_{-N-1/2}^{\pm1/2},\dots,{\boldsymbol{Q}}_{-N-1/2}^{\pm(N-1/2)}\equiv0$.
The former fact yields a familiar restriction on the order of spherical
harmonics, but the latter (dual) fact is less well known.

Many formulas and identities involving $\boldsymbol{Q}_{\nu}^\mu$ contain
such obtrusive factors as $\Gamma(\nu+\mu+1)$, and the wish to simplify
these formulas partially justifies the introduction of the less unfamiliar
function~$\widehat Q_\nu^\mu$, and its definition as
$\Gamma(\nu+\nobreak\mu+\nobreak1)\boldsymbol{Q}_{\nu}^\mu$.  For example,
the formula~\cite[\S\,14.1]{Olver97}
\begin{equation}
  u_n^m(z) = (z^2-1)^{m/2}\frac{{\rm d}^m}{{\rm d}z^m} u_n(x)
\end{equation}
(where $\nu,\mu$ are written as $n,m$ because they are taken to be
non-negative integers) holds if $u_n^m$ equals $P_n^m$ or~$\widehat Q_n^m$; but
it does not hold if $u_n^m$ equals~$\boldsymbol{Q}_n^m$.

However, the introduction of the useful function $\widehat Q_\nu^\mu$ comes at
a price.  Owing to the $\Gamma(\nu+\nobreak\mu+\nobreak1)$ factor, $\widehat
Q_\nu^\mu$~is undefined if $\nu+\nobreak\mu$ is a negative integer,
\emph{except} in the abovementioned sub-case: if $N=1,2,3,\dots$, then
$\widehat{Q}_{-N-1/2}^{\pm1/2}(z),\dots,\widehat{Q}_{-N-1/2}^{\pm(N-1/2)}(z)$ are
defined for $z\notin(-\infty,1]$; and~as analytic functions of~$z$, are not
  identically zero.  Informally, this is because in each of these, the
  product of $\Gamma(\nu+\mu+1)$ (infinite) with~$\boldsymbol{Q}_\nu^\mu$
  (zero) is finite and non-zero.  This statement can be made rigorous by a
  limiting argument.  The asymptotic behavior of~$\widehat Q_\nu^\mu$ is
  \begin{equation}
    \label{eq:asympQhat}
    \widehat{Q}_\nu^{\mu}(z)\sim\frac{\sqrt{\pi}\,\Gamma(\nu+\mu+1)}{2^{\nu+1}\Gamma(\nu+3/2)}\,(1/z)^{\nu+1},\qquad z\to\infty,
  \end{equation}
if neither gamma function is infinite.  By continuity, this fact will
suffice for the derivation of Legendre identities that are valid for all
choices of parameter for which $\widehat Q_\nu^\mu$~is defined.

By convention, the Ferrers functions ${\rm P}_\nu^\mu,{\rm Q}_\nu^\mu$ are
related to the Legendre functions ${P}_\nu^\mu,\widehat{Q}_\nu^\mu$ on their
common domains $\pm{\rm Im}\,z>0$, i.e., on the upper and lower
half-planes, by
\begin{subequations}
\label{eq:ferrers}
  \begin{align}
    {\rm P}_\nu^\mu &= {\rm e}^{\pm\mu\pi{\rm i}/2} P_\nu^\mu,\label{eq:ferrersp}\\
    {\rm Q}_\nu^\mu &= {\rm e}^{\mp\mu\pi{\rm i}/2} \widehat{Q}_\nu^\mu \pm {\rm
      i}\frac\pi2\, {\rm e}^{\pm\mu\pi{\rm i}/2} P_\nu^\mu.\label{eq:ferrersq}
  \end{align}
\end{subequations}
Thus on $(-1,1)$, ${\rm P}_\nu^\mu,{\rm Q}_\nu^\mu$ are combinations of
boundary values of the analytic functions ${P}_\nu^\mu,\widehat{Q}_\nu^\mu$.
Equation~(\ref{eq:ferrersq}) is meaningful for all $\nu,\mu$ for which
$\widehat{Q}_\nu^\mu$~is defined, ${\rm Q}_\nu^\mu$~being defined under the
same conditions.  The asymptotic behavior of~${\rm P}_\nu^{-\mu}$ is given
(when $\mu\neq-1,-2,-3,\dots$) by
\begin{equation}
\label{eq:asympPFerrers}
{\rm P}_\nu^{-\mu}(z)\sim\frac1{2^{\mu/2}\Gamma(\mu+1)}\,(1-z)^{\mu/2},\qquad z\to1,  
\end{equation}
and that of ${\rm Q}_\nu^{\mu}(z)$ is discussed in
section~\ref{sec:derivation}.  It should be noted that restricted
to~$(-1,1)$, the functions ${\rm P}_\nu^\mu,2{\rm Q}_\nu^\mu$ are Hilbert
transforms of each other (via `Neumann's integral') when $\mu=0$, and are
related by more complicated integral transforms when
$\mu\neq0$~\cite{Love65}.  This relationship, which is suggested
by~(\ref{eq:ferrers}), is why $Q_\nu^\mu$,~$\boldsymbol{Q}_\nu^\mu$
and~$\widehat Q_\nu^\mu$ should really be defined to be twice as large.
But to maintain compatibility with the past, the factor~$\frac12$ implicit
in their definitions will be kept.

Besides ${\rm P}_\nu^\mu,{\rm Q}_\nu^\mu$ and $P_\nu^\mu,\widehat Q_\nu^\mu$,
the identities in the next section will be expressed for clarity with the
aid of the auxiliary Ferrers function $\xbar{\rm P}_\nu^\mu$ defined by
$\xbar{\rm P}_\nu^\mu(z)={\rm P}_\nu^\mu(-z)$, or
equivalently~\cite[3.4(14)]{Erdelyi53}
\begin{equation}
\label{eq:barP}
\xbar{\rm P}_\nu^\mu = \cos[(\nu+\mu)\pi]\,{\rm P}_\nu^\mu
-\frac2\pi \sin[(\nu+\mu)\pi]\,{\rm Q}_\nu^\mu,
\end{equation}
and the auxiliary Legendre function $\widetilde{P}_\nu^\mu$ defined by
\begin{equation}
\label{eq:widetildeP}
\widetilde{P}_\nu^\mu = \cos[(\nu+\mu)\pi]\,{P}_\nu^\mu
-\frac2\pi \cos(\mu\pi)\,\sin[(\nu+\mu)\pi]\,\widehat{Q}_\nu^\mu.
\end{equation}
The latter \emph{ad hoc} function is less mysterious than it looks:
by~\cite[3.3(10)]{Erdelyi53}, it satisfies
\begin{equation}
\widetilde P_\nu^\mu(x) =\frac12\left[
{\rm e}^{\mu\pi{\rm i}}P_\nu^\mu(-x+{\rm i}0)
+
{\rm e}^{-\mu\pi{\rm i}}P_\nu^\mu(-x-{\rm i}0)
\right], \qquad x>1.
\end{equation}
Both $\xbar{\rm P}_\nu^\mu$ and~$\widetilde{P}_\nu^\mu$ are solutions
of~(\ref{eq:ode}), on the respective (Ferrers or Legendre) domain.

\section{Main results}
\label{sec:mainresults}

Theorems \ref{thm:i4}--\ref{thm:i3p} and their respective corollaries
contain the main results: a collection of two dozen algebraic Legendre
identities, or transformation formulas.  Each Theorem contains a list of
identities in rationally parametrized form, and its corollary gives each
identity in a trigonometrically parametrized form, which may be more useful
in applications.

The identities come from rational curves $\mathcal{C}_r$
and~$\mathcal{C}'_r$, where the `signature'~$r$ equals $3,4$ or~$6$.  The
following numbering scheme is used.  For each~$r$, the identities come in
pairs.  The two pairs coming from~$\mathcal{C}_r$ are labelled
$I_r(i),I_r(\overline i)$ and $I_r(ii),I_r(\overline{ii})$, and the two
pairs coming from~$\mathcal{C}'_r$ are labelled $I'_r(i),I'_r(\overline i)$
and $I'_r(ii),I'_r(\overline{ii})$.  In each of these pairs with one
exception, the Ferrers functions ${\rm P}_{-1/r}^{-\alpha}, \xbar{\rm
  P}_{-1/r}^{-\alpha}$ appear on the left.  The initial pair
$I_r(i),I_r(\overline i)$ is the exception: on the left it has instead the
Legendre functions ${P}_{-1/r}^{-\alpha}, \widetilde{P}_{-1/r}^{-\alpha}$.
The identities (\ref{eq:2a}),(\ref{eq:2b}) in the Introduction appear here
as $I_4(i),I_6(i)$.

The theorems are ordered so that the case $r=4$ is covered first, since the
curves $\mathcal{C}_4,\mathcal{C}'_4$ and their associated identities are
relatively simple; then $r=6$; and finally $r=3$.  The $r=3$ case closely
resembles the $r=6$ case, but is deficient in that the order~$-\alpha$ of
the left-hand function must equal zero.  The $r=3$ identities given in
Theorems \ref{thm:i3},~\ref{thm:i3p} and their corollaries cannot readily be
generalized to non-zero order, and the same is true of the $r=4$ identities
coming from~$\mathcal{C}_4'$.

It should be noted that when the order $-\alpha$~is an integer~$m$, by
applying these identities one can express the Ferrers pair ${\rm
  P}_{-1/r}^m, \xbar{\rm P}_{-1/r}^m$ (or the Legendre pair ${P}_{-1/r}^m,
\widetilde{P}_{-1/r}^m$ appearing on the left of $I_r(i),I_r(\overline i)$)
in~terms of Legendre or Ferrers functions of half-odd-integer degree and
integer order.  Hence, one can express ${\rm P}_{-1/r}^m, {\rm Q}_{-1/r}^m$
(or ${P}_{-1/r}^m, \widehat{Q}_{-1/r}^m$) in~terms of complete elliptic
integrals.  The extension of this result to the case when the degree is
not~$-1/r$, but differs from an integer by~$-1/r$ (or by~$+1/r$) is
explained in section~\ref{sec:ellreps}.

In each identity, the parameter (whether $\xi$, $\theta$ or~$p$) varies
over a specified real interval.  In fact each identity extends by analytic
continuation to the complex domain, to the largest connected open subset
of~$\mathbb{C}$ containing this interval on which both sides are defined.
The only obstruction to their being defined is the requirement that neither
function argument lie on a cut.

\subsection{Signature-4 identities}

\begin{definition*}
  The algebraic $L$--$R$ curve $\mathcal{C}_4$ is the curve
  \begin{equation}
    \label{eq:c4}
    LR^2 + (R^2-2) = 0,
  \end{equation}
  which is rationally parametrized by
  \begin{displaymath}
    L=1-2(1-p^2)=-1+2p^2,\qquad R=1/p,
  \end{displaymath}
  and is invariant under $R\mapsto-R$, which is performed by $p\mapsto -p$.
  An associated prefactor function $A=A(p)$, with limit unity when $p\to1$
  and $(L,R)\to(1,1)$, is
  \begin{displaymath}
    A(p) = \left[2^4\,\frac{(R^2-1)^2}{(L^2-1)^2}\right]^{1/16} \!=\: \sqrt{\frac1p}.
  \end{displaymath}
\end{definition*}

\begin{theorem}
  \label{thm:i4}
  For each pair $u,v$ of Legendre or Ferrers functions listed below, an
  identity
  \begin{displaymath}
    u_{-1/4}^{-\alpha}(L(p)) = 2^\alpha A(p)\,   v_{\alpha-1/2}^{-\alpha}(R(p))
  \end{displaymath}
  of type\/ $I_4$, coming from the curve\/ $\mathcal{C}_4$, holds for the
  specified range of values of the parameter\/ $p$.

\medskip\medskip
\begin{tabular}{llllll}
\hline
Label &$u_{-1/4}^{-\alpha}$ &$v_{\alpha-1/2}^{-\alpha}$ &$p$ range & $L$ range & $R$ range\\
\hline
\hline
(i) & $P_{-1/4}^{-\alpha}$ & ${\rm P}_{\alpha-1/2}^{-\alpha}$ & $(1,\infty)$ & $1<L<\infty$ & $1>R>0$ \\
$(\overline{i})$ & $\csc(\pi/4)\widetilde P_{-1/4}^{-\alpha}$ & $(2/\pi){\rm Q}_{\alpha-1/2}^{-\alpha}$ &  & & \\
\hline
(ii) & ${\rm P}_{-1/4}^{-\alpha}$ & ${P}_{\alpha-1/2}^{-\alpha}$ & $(0,1)$ & $-1<L<1$ & $\infty>R>1$ \\
$(\overline{ii})$ & $\csc(\pi/4)\xbar{\rm P}_{-1/4}^{-\alpha}$ & $(2/\pi)\widehat{Q}_{\alpha-1/2}^{-\alpha}$ &  & & \\
\hline
\end{tabular}
\end{theorem}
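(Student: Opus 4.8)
The plan is to follow the algebraic-curve method outlined in the Introduction: exhibit both sides of each identity as solutions of one and the same second-order ODE in the parameter $p$, and then identify each side by its behaviour at a singular point. Note first that all four left-hand functions $u$ (namely $P$, a constant times $\widetilde P$, ${\rm P}$, a constant times $\xbar{\rm P}$) solve Legendre's equation (\ref{eq:ode}) of degree $-\tfrac14$ and order $-\alpha$ in the variable $L$, while all four right-hand functions $v$ solve (\ref{eq:ode}) of degree $\alpha-\tfrac12$ and order $-\alpha$ in the variable $R$; the order enters only through $\mu^2=\alpha^2$. Thus the substance of the ODE step is common to all four pairs, and only the final solution-matching will differ among them.

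First I would prove the ODE identity $\mathcal{E}_L=\mathcal{E}_R$, which is the heart of the argument. The defining relation of $\mathcal{C}_4$, written as $R^2=2/(L+1)$, shows that it is really $R^2$, not $R$, that is algebraically tied to $L$. Since (\ref{eq:ode}) of degree $\alpha-\tfrac12$ is invariant under $R\mapsto-R$, it descends under $T\defeq R^2$ to a Fuchsian equation in $T$. As $T=R^2$ is a $2\!:\!1$ cover ramified over $T=0$ (image of the ordinary point $R=0$) and $T=\infty$ (image of $R=\infty$), the exponents at those two images are halved; one computes the descended $P$-symbol to carry exponents $\{0,\tfrac12\}$ at $T=0$, $\{\pm\alpha/2\}$ at $T=1$ (the unramified image of $R=\pm1$), and $\{\tfrac14\mp\alpha/2\}$ at $T=\infty$. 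The relation $T=2/(L+1)$ is a M\"obius map carrying $T=0,1,\infty$ to $L=\infty,1,-1$, under which exponents transport unchanged. Finally, in the variable $L$ the prefactor obeys $2^\alpha A(p)\propto(L+1)^{-1/4}$, and multiplication by it shifts exponents by $-\tfrac14$ at $L=-1$ and by $+\tfrac14$ at $L=\infty$. The result is the $P$-symbol with exponents $\{\pm\alpha/2\}$ at $L=\pm1$ and $\{\tfrac14,\tfrac34\}$ at $L=\infty$, which is exactly that of Legendre's equation of degree $-\tfrac14$ and order $-\alpha$. Both equations have only three regular singular points, so Riemann's uniqueness theorem forces them to coincide; in effect the prefactor is engineered precisely to realign the exponents at the ramification images $L=-1$ and $L=\infty$.

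It then remains to check that, within the two-dimensional solution space of this common equation, the two sides of each identity are the \emph{same} solution. For the unbarred pairs $I_4(i),I_4(ii)$ I would match leading behaviour as $p\to1$, where $(L,R)\to(1,1)$ and $A(p)\to1$; the relevant solution is the one recessive at $z=1$, i.e.\ $P^{-\alpha}$ or ${\rm P}^{-\alpha}$. Using (\ref{eq:asympP}) and (\ref{eq:asympPFerrers}) with $\mu=\alpha$, together with $L-1\sim4(p-1)$ and $1-R\sim(p-1)$ (and the reflected estimates for $p\to1^-$ in case $(ii)$), the leading coefficients agree because the factor $4^{\alpha/2}=2^\alpha$ arising from $L-1$ exactly absorbs the explicit $2^\alpha$ in the identity. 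Hence the sides coincide near $p=1$, and therefore throughout the stated interval. The barred pairs $I_4(\overline i),I_4(\overline{ii})$ single out the second solution; here recessive matching does not apply directly, so I would instead match the second solution at $p=1$ using the defining relations (\ref{eq:barP}),(\ref{eq:widetildeP}) of $\xbar{\rm P}$ and $\widetilde P$ — chosen so that their $Q$-type partners appear cleanly — to reduce the task, once the recessive parts are accounted for, to matching the ${\rm Q}$- and $\widehat Q$-contributions via their local behaviour, for which (\ref{eq:asympQhat}) and the Ferrers relation (\ref{eq:ferrers}) supply the needed connection (the Ferrers-${\rm Q}$ asymptotics being those developed in section~\ref{sec:derivation}).

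The step I expect to be the main obstacle is the exponent bookkeeping in the ODE identity: correctly halving exponents at the ramification points, and ensuring that no spurious logarithmic solution intrudes when an exponent difference becomes an integer — as at $L=\pm1$ when $\alpha\in\mathbb{Z}$, or at $L=\infty$. This I would circumvent by carrying out all matching for generic $\alpha$ and invoking continuity; the statement already anticipates this, since the final identities are asserted by analytic continuation in $\alpha$ and in the complex parameter. Once $\mathcal{E}_L=\mathcal{E}_R$ and the single recessive match are in hand, the remaining expansions — including the barred cases — are routine.
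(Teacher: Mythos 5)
Your proposal is correct in substance, and on the crucial ODE step it takes a genuinely different route from the paper. The paper lifts both sides to the $p$-sphere, where the common equation $\mathcal{E}_L=\mathcal{E}_R$ has singular points $p=-1,0,1$ plus a removable one at $p=\infty$; because more than three singular points are present, agreement of P-symbols is not sufficient (accessory parameters), so the paper insists that the lifted equation be computed explicitly and compared term by term, and it records that equation in \S\,\ref{sec:derivation}. You instead exploit the special shape of $\mathcal{C}_4$ --- the relation $R^2=2/(L+1)$ is homographic in $T=R^2$ --- to descend the right-hand Legendre equation (legitimately, by its invariance under $R\mapsto-R$) to the $T$-sphere, transport it by the M\"obius map to the $L$-sphere, and absorb the prefactor $2^\alpha A\propto(L+1)^{-1/4}$ as an exponent shift; at every stage only three regular singular points occur, so the Riemann--Papperitz uniqueness theorem (the $m=3$, no-accessory-parameter case of the paper's own count) identifies the result with Legendre's equation of degree $-\tfrac14$ and order $-\alpha$. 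Your exponent bookkeeping ($\{0,\tfrac12\}$, $\{\pm\alpha/2\}$, $\{\tfrac14\mp\alpha/2\}$ at $T=0,1,\infty$) is correct, as is the recessive match at $p=1$, where the explicit $2^\alpha$ is indeed the factor $4^{\alpha/2}$ coming from $L-1\sim4(p-1)$ versus $1-R\sim p-1$. What each approach buys: yours is cleaner and avoids all explicit computation, but it rests on the accident that $L\leftrightarrow R^2$ is a homography, which fails for $\mathcal{C}_6$, $\mathcal{C}_3$ and the primed curves; the paper's heavier, explicit comparison is what makes one uniform method cover all six curves (and the curve $\mathcal{X}$ of \S\,7).

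Two smaller points on the solution-matching step. For $I_4(\overline{ii})$ your two-term match at $p=1$ would work but is unnecessary: the defining singular points of $\xbar{\rm P}_{-1/4}^{-\alpha}$ (at $L=-1$) and of $\widehat{Q}_{\alpha-1/2}^{-\alpha}$ (at $R=\infty$) both sit at $p=0$, so a one-term Frobenius match there --- using precisely (\ref{eq:asympQhat}) --- suffices, and this is the paper's route. Conversely, for $I_4(\overline{i})$ the fine two-term match at $p=1$ is genuinely forced (the $R$-interval never reaches $R=-1$), but the tool you cite, (\ref{eq:asympQhat}), is the $z\to\infty$ behaviour and is not what is needed there; the required input is the two-term behaviour of $\widehat{Q}_{-1/4}^{-\alpha}(L)$ and ${\rm Q}_{\alpha-1/2}^{-\alpha}(R)$ as the arguments tend to $1$, namely (\ref{eq:512}) and its Ferrers analogue from \S\,\ref{sec:derivation}, with both Frobenius coefficients checked to agree --- which is exactly how the paper handles $I_r(\overline{i})$, and is the reason $\widetilde{P}$ was defined by (\ref{eq:widetildeP}) in the first place. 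Neither point is a gap in the logic, only in the citations and economy of the plan.
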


\smallskip
To construct trigonometric versions of these identities, one substitutes
$L=\cosh\xi$ and $L=\cos\theta$ into the relation~(\ref{eq:c4}), and solves
for~$R$ as a function of $\xi$ or~$\theta$.  This yields the following.

\begin{corollary}
  \label{cor:i4}
  The following identities coming from\/ $\mathcal{C}_4$ hold for
  $\alpha\in\mathbb{C}$, when $\xi\in(0,\infty)$ and $\theta\in(0,\pi)$.
  \begin{align*}
    &I_4(i):\quad P_{-1/4}^{-\alpha}(\cosh\xi) = 2^\alpha\sqrt{{\rm sech}(\xi/2)}\,{\rm P}_{\alpha-1/2}^{-\alpha}({\rm sech}(\xi/2));&\\
    &I_4(\overline{i}):\quad \text{The same, with
      $P_{-1/4}^{-\alpha},{\rm P}_{\alpha-1/2}^{-\alpha}$ replaced by\/ $\csc(\pi/4)\widetilde
      P_{-1/4}^{-\alpha},(2/\pi){\rm Q}_{\alpha-1/2}^{-\alpha}$};&\\
    &I_4(ii):\quad {\rm P}_{-1/4}^{-\alpha}(\cos\theta) = 2^\alpha\sqrt{\sec(\theta/2)}\,{P}_{\alpha-1/2}^{-\alpha}(\sec(\theta/2));&\\
    &I_4(\overline{ii}):\quad \text{The same, with\/
      ${\rm P}_{-1/4}^{-\alpha},{P}_{\alpha-1/2}^{-\alpha}$ replaced by\/ $\csc(\pi/4)\xbar{\rm P}_{-1/4}^{-\alpha},(2/\pi)\widehat{Q}_{\alpha-1/2}^{-\alpha}$}.&
  \end{align*}
\end{corollary}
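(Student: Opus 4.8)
The statement to prove, Corollary~\ref{cor:i4}, is the trigonometric reparametrization of Theorem~\ref{thm:i4}, so in principle I may assume the Theorem and simply change variables. Since the substance lies in the Theorem, I outline its proof first, by the differential-equation method of the Introduction, and then specialize. The main obstacle will be showing that the two sides satisfy literally the \emph{same} second-order equation in $p$, not merely equations with the same exponents: with four singular points there is one accessory parameter, so the exponent data alone do not determine the equation. I resolve this with the symmetry $p\mapsto-p$.

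Write the asserted identity as $u_{-1/4}^{-\alpha}(L(p))=2^\alpha A(p)\,v_{\alpha-1/2}^{-\alpha}(R(p))$ and regard each side as a function of $p$. The left side solves the equation $\mathcal{E}_L$ obtained by pulling the Legendre equation~(\ref{eq:ode}), with $(\nu,\mu)=(-1/4,-\alpha)$, back along $L=2p^2-1$; the right side solves the equation $\mathcal{E}_R$ obtained by pulling~(\ref{eq:ode}), with $(\nu,\mu)=(\alpha-1/2,-\alpha)$, back along $R=1/p$ and then conjugating by the prefactor $A(p)=p^{-1/2}$. Both are Fuchsian on the Riemann sphere with singular points $p=0,\pm1,\infty$, because $L=2p^2-1$ sends these to $L=-1,1,\infty$ (with quadratic ramification over $-1$ and $\infty$) and $R=1/p$ sends them to $R=\infty,\pm1,0$. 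Computing the characteristic exponents from~(\ref{eq:ode})—for $\mathcal{E}_L$ doubling them at the ramified points $p=0,\infty$, and for $\mathcal{E}_R$ shifting them by $-1/2$ at $p=0$ and by $+1/2$ at $p=\infty$ on account of the prefactor $p^{-1/2}$—gives, for \emph{both} equations, the pairs $\{\pm\alpha/2\}$ at $p=\pm1$, $\{\pm\alpha\}$ at $p=0$, and $\{1/2,3/2\}$ at $p=\infty$. It is exactly this prefactor that brings the $p=0$ and $p=\infty$ exponents of $\mathcal{E}_R$ into agreement with those of $\mathcal{E}_L$, which is why $A(p)=p^{-1/2}$.

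To clear the accessory-parameter obstacle I use that $L=2p^2-1$ is even and that the Legendre equation in $R$ is invariant under $R\mapsto-R$ (effected by $p\mapsto-p$), while the prefactor $p^{-1/2}$ changes only by a constant under $p\mapsto-p$; hence both $\mathcal{E}_L$ and $\mathcal{E}_R$ are invariant under $p\mapsto-p$. An even second-order Fuchsian equation on $\{0,\pm1,\infty\}$ descends, under $w=p^2$, to a second-order Fuchsian equation with the three singular points $w=0,1,\infty$, and by the Riemann--Papperitz theorem such an equation has no accessory parameter and is fixed by its exponents. The two descents have identical exponents, so they coincide, and therefore $\mathcal{E}_L=\mathcal{E}_R$ upstairs. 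It then remains only to identify the two sides as the \emph{same} solution, which I do by matching leading behavior as $p\to1^+$ (so $L\to1^+$, $R\to1^-$): by~(\ref{eq:asympP}) and~(\ref{eq:asympPFerrers}), with $L-1\sim4(p-1)$ and $1-R\sim(p-1)$, each side is asymptotic to $\frac{2^{\alpha/2}}{\Gamma(\alpha+1)}\,(p-1)^{\alpha/2}$; this both selects the solution recessive at $p=1$ and pins the constant $2^\alpha$, giving identity~$(i)$. The barred identity~$(\overline i)$ and the pair $(ii),(\overline{ii})$ on the branch $p\in(0,1)$ (where the Legendre and Ferrers roles interchange) follow in the same manner, the definitions~(\ref{eq:widetildeP}) and~(\ref{eq:barP}) of $\widetilde P$ and $\xbar{\rm P}$ together with the asymptotics of ${\rm Q},\widehat{Q}$ fixing the constants $\csc(\pi/4)$ and $2/\pi$; the parameter values at which a gamma factor is infinite are covered by continuity in $\alpha$.

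Granting Theorem~\ref{thm:i4}, Corollary~\ref{cor:i4} is pure reparametrization of~(\ref{eq:c4}). On $\xi\in(0,\infty)$ I set $L=\cosh\xi\in(1,\infty)$; then $p^2=(L+1)/2=\cosh^2(\xi/2)$ gives $p=\cosh(\xi/2)\in(1,\infty)$, whence $R=1/p={\rm sech}(\xi/2)\in(0,1)$ and $A(p)=p^{-1/2}=\sqrt{{\rm sech}(\xi/2)}$; substituting into rows $(i),(\overline i)$ of the Theorem produces $I_4(i),I_4(\overline i)$. On $\theta\in(0,\pi)$ I set $L=\cos\theta\in(-1,1)$, so that $p=\cos(\theta/2)\in(0,1)$, $R=\sec(\theta/2)\in(1,\infty)$ and $A(p)=\sqrt{\sec(\theta/2)}$, turning rows $(ii),(\overline{ii})$ into $I_4(ii),I_4(\overline{ii})$. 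Each map is a real-analytic bijection of the given $\xi$- or $\theta$-interval onto the corresponding $p$-interval, and every square root is taken of a positive quantity, so no branch ambiguity arises; the extension to complex argument then follows by the analytic continuation noted after the theorem.
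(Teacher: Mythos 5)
Your overall strategy --- prove Theorem~\ref{thm:i4} by lifting Legendre's equation along $L=2p^2-1$ and $R=1/p$ with prefactor $p^{-1/2}$, then obtain Corollary~\ref{cor:i4} by the substitutions $p=\cosh(\xi/2)$ and $p=\cos(\theta/2)$ --- is the paper's, and your exponent bookkeeping, your matching argument for $I_4(i)$, $I_4(ii)$, $I_4(\overline{ii})$, and the reparametrization step are all correct. Your resolution of the accessory-parameter problem is genuinely different from the paper's and is sound: where the paper works out both lifted equations explicitly and compares them term by term, you exploit the common $p\mapsto-p$ symmetry to descend through $w=p^2$ to a Fuchsian equation with only the three singular points $w=0,1,\infty$, where the Riemann--Papperitz uniqueness theorem applies. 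This replaces a brute-force verification by a structural argument, though it is special to curves admitting such a quadratic symmetry, whereas the paper's explicit-comparison template extends uniformly to $\mathcal{C}_6$, $\mathcal{C}_3$, $\mathcal{C}'_r$, etc.

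The genuine gap is $I_4(\overline{i})$, which you claim follows ``in the same manner.'' It cannot. On the $p$-interval $(1,\infty)$ the $R$-range is $(0,1)$: the far endpoint $p=\infty$ maps to $R=0$, an \emph{ordinary} point of Legendre's equation, so there is no second singular point at which Frobenius solutions could be matched, and the endpoint-matching algorithm yields only one identity from this interval, namely $I_4(i)$. Moreover, neither side of $I_4(\overline{i})$ is a Frobenius solution at the one available singular point $p=1$: $\widetilde P_{-1/4}^{-\alpha}$ is by its definition~(\ref{eq:widetildeP}) a combination of $P_{-1/4}^{-\alpha}$ and $\widehat Q_{-1/4}^{-\alpha}$, and the Ferrers function ${\rm Q}_{\alpha-1/2}^{-\alpha}$ is not a Frobenius solution at any singular point. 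Each side therefore carries both exponents $\pm\alpha/2$ at $p=1$, and matching only the leading term, as in your argument for $I_4(i)$, leaves a one-parameter ambiguity: either side could be altered by a multiple of the recessive solution without disturbing the leading asymptotics. The paper closes this with its ``finer asymptotics'' step: two-term expansions at $p=1$, via eq.~(\ref{eq:512}) for $\widehat Q$ and its Ferrers analogue for ${\rm Q}$, verifying that the coefficients of \emph{both} Frobenius series agree --- indeed $\widetilde P$ was defined in~(\ref{eq:widetildeP}) exactly so that this two-term match comes out. Your proposal needs this supplementary argument (or a substitute, e.g.\ deducing $(\overline{i})$ from the already-proved identities via connection formulas) before $I_4(\overline{i})$, and hence the full corollary, is established.
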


\begin{definition*}
  The algebraic $L$--$R$ curve $\mathcal{C}'_4$ is the curve
  \begin{equation}
    \label{eq:c4p}
    (L-1)(R+3)^2 + 2(R-1)^2=0,
  \end{equation}
  which is rationally parametrized by
  \begin{displaymath}
    L=1-2\,\frac{(p-1)^2}{(p+1)^2}=-1+8\,\frac{p}{(p+1)^2},\qquad
    R=1-2\,\frac{p-1}p = -1 + \frac2p,
  \end{displaymath}
  and is invariant under $R\mapsto\allowbreak 4/(R+\nobreak1)-\nobreak1$,
  which is performed by $p\mapsto 1/p$.  An associated prefactor function
  $A=A(p)$, with limit unity when $p\to1$ and $(L,R)\to(1,1)$, is
  \begin{displaymath}
    A(p) = \left[\frac1{2^6}\,\frac{(R-1)^4(R+1)^4}{(L-1)^2(L+1)^4}\right]^{1/16} \!=\: \sqrt{\frac{1+p}{2p}}.
  \end{displaymath}
\end{definition*}

\begin{theorem}
  \label{thm:i4p}
  For each pair $u,v$ of Legendre or Ferrers functions listed below, an
  identity
  \begin{displaymath}
    u_{-1/4}(L(p)) = A(p)\, v_{-1/2}(R(p))
  \end{displaymath}
  of type\/ $I'_4$, coming from the curve\/ $\mathcal{C}'_4$, holds for the
  specified range of values of the parameter\/ $p$.

\medskip\medskip
\begin{tabular}{llllll}
\hline
Label &$u_{-1/4}$ &$v_{-1/2}$ &$p$ range & $L$ range & $R$ range\\
\hline
\hline
(i) & ${\rm P}_{-1/4}$ & ${\rm P}_{-1/2}$ & $(1,\infty)$ & $1>L>-1$ & $1>R>-1$ \\
$(\overline{i})$ & $\frac12\csc(\pi/4)\xbar{\rm P}_{-1/4}$ & $\xbar{\rm P}_{-1/2}$ &  &  &  \\
\hline
(ii) & ${\rm P}_{-1/4}$ & ${P}_{-1/2}$ & $(0,1)$ & $-1<L<1$ & $\infty>R>1$ \\
$(\overline{ii})$ & $\frac12\csc(\pi/4)\xbar{\rm P}_{-1/4}$ & $(2/\pi)\widehat{Q}_{-1/2}$ &  & & \\
\hline
\end{tabular}
\end{theorem}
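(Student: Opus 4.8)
\emph{Plan.} The plan is to follow the differential-equation method sketched in the Introduction and developed in Sections \ref{sec:suppl}--\ref{sec:derivation}: all four identities of type $I'_4$ rest on a single underlying statement, namely that the algebraic substitution encoded by $\mathcal{C}'_4$, together with the gauge factor $A(p)=\sqrt{(1+p)/(2p)}$, carries the order-zero Legendre equation of degree $-\tfrac12$ into that of degree $-\tfrac14$. Writing $\mathcal{E}_L$ for the second-order ODE in $p$ satisfied by $u_{-1/4}(L(p))$ for \emph{any} order-zero solution $u_{-1/4}$ of (\ref{eq:ode}), and $\mathcal{E}_R$ for the ODE satisfied by $A(p)\,v_{-1/2}(R(p))$ for any order-zero solution $v_{-1/2}$, the first and main task is to prove $\mathcal{E}_L=\mathcal{E}_R$. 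The four rows of the table then correspond to choosing, within the common two-dimensional solution space, the two distinguished directions (the principal solution versus the second, logarithmic one) and the two real branches of the curve: the range $p\in(1,\infty)$, on which $R\in(-1,1)$ is a Ferrers argument, versus $p\in(0,1)$, on which $R\in(1,\infty)$ is a Legendre argument.

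To establish $\mathcal{E}_L=\mathcal{E}_R$ I would first pull both Legendre equations back to the $p$-sphere and compare their Fuchsian data. A short computation with (\ref{eq:c4p}) shows that the only singular points on the $p$-sphere are $p\in\{0,1,-1,\infty\}$: the map $R=-1+2/p$ is a M\"obius transformation, while $L=-1+8p/(p+1)^2$ is two-to-one, ramified exactly over $L=1$ (at $p=1$) and over $L=\infty$ (at $p=-1$), with no further critical points. Tracking the characteristic exponents $\{0,0\}$ at $L,R=\pm1$ and $\{\tfrac14,\tfrac34\}$, $\{\tfrac12,\tfrac12\}$ at $L,R=\infty$ through these maps, and incorporating the shifts produced by $A(p)$ (which behaves like $p^{-1/2}$ at $p=0$ and like $(1+p)^{1/2}$ at $p=-1$, and is analytic and nonzero at $p=1,\infty$), one finds that both equations have exponents $\{0,0\}$ at each of $p=0,1,\infty$ and $\{\tfrac12,\tfrac32\}$ at $p=-1$. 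In particular the Fuchs relation holds, the total exponent sum being $2=4-2$.

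The agreement of exponents is necessary but not sufficient, and this is where the real obstacle lies: a second-order Fuchsian equation on $\mathbb{P}^1$ with four singular points carries one accessory parameter, so the singular data above do not by themselves force $\mathcal{E}_L=\mathcal{E}_R$. The remaining freedom must be pinned down by verifying that the one point with integer exponent difference, $p=-1$ (difference $3/2-1/2=1$), is an \emph{apparent} (logarithm-free) singularity of both equations; since each of $\mathcal{E}_L,\mathcal{E}_R$ arises from pulling back a rigid hypergeometric equation, this is the expected behavior, and the no-logarithm condition is a single linear constraint that fixes the accessory parameter identically on the two sides. Equivalently, and perhaps more safely, one normalizes both equations to the form $w''+P(p)w'+Q(p)w=0$ and checks directly that the rational functions $P$ and $Q$ coincide. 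I expect this coefficient computation, elementary but bookkeeping-heavy, to be the crux of the proof.

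Once $\mathcal{E}_L=\mathcal{E}_R$ is in hand, call the common equation $\mathcal{E}$; each identity then reduces to showing that its two sides are the \emph{same} element of the solution space of $\mathcal{E}$, which I would settle by matching leading asymptotics at a convenient singular point, exactly as for Whipple's relation in Section \ref{sec:suppl}. For $I'_4(i)$, letting $p\to1$ drives $(L,R)\to(1,1)$; by (\ref{eq:asympPFerrers}) with $\mu=0$ one has ${\rm P}_\nu(1)=1$, and $A(1)=1$, so both ${\rm P}_{-1/4}(L(p))$ and $A(p)\,{\rm P}_{-1/2}(R(p))$ are bounded at $p=1$ with value $1$; since $p=1$ has exponents $\{0,0\}$ and hence a unique solution regular there with prescribed value, the two sides coincide. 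Case $I'_4(ii)$ is identical except that $R\in(1,\infty)$ lies in the Legendre domain, so ${\rm P}_{-1/2}$ is replaced by $P_{-1/2}$, the normalization $P_\nu(1)=1$ from (\ref{eq:asympP}) again supplying the match at $p=1$. The barred identities $I'_4(\overline i),I'_4(\overline{ii})$ address the second, logarithmic solution direction: using the definitions (\ref{eq:barP}),(\ref{eq:widetildeP}) to write $\xbar{\rm P}_{-1/4}$ and $\xbar{\rm P}_{-1/2},\widehat Q_{-1/2}$ as the appropriate combinations of ${\rm P}$ and ${\rm Q}$ (here $\xbar{\rm P}$ is precisely the solution carrying the logarithm at $p=1$), I would match the coefficient of the logarithmic term and the subleading constant at $p=1$ — or, alternatively, the leading power at $p=-1$, where the exponents $\{\tfrac12,\tfrac32\}$ are distinct — to identify them as the same solution of $\mathcal{E}$; this matching also fixes the prefactors $\tfrac12\csc(\pi/4)$ and $2/\pi$. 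Finally, analytic continuation extends each identity from its real $p$-interval to the largest connected domain on which both sides avoid the cuts, as asserted in the theorem.
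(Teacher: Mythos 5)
Your proposal is correct and follows the paper's own strategy: lift both sides to the $p$-sphere, verify that the lifted equations $\mathcal{E}_L=\mathcal{E}_R$ coincide, then identify the two sides as the same Frobenius solution by matching asymptotics at a singular point. Your singular-point and exponent bookkeeping (exponents $\{0,0\}$ at $p=0,1,\infty$ and $\{\frac12,\frac32\}$ at $p=-1$) agrees with the paper's, and your treatment of $I_4'(i),I_4'(ii)$ by matching values at $p=1$ is exactly the paper's algorithm. Two points of comparison. First, your idea of pinning down the single accessory parameter by requiring $p=-1$ to be a logarithm-free singularity is a valid shortcut that the paper does not use: the no-log condition at exponent difference $1$ is affine in the accessory parameter, and both lifted equations satisfy it, since $p=-1$ covers $L=\infty$ (where the exponent difference $\frac12$ is non-integral) on one side and the ordinary point $R=-3$ on the other. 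The paper instead insists on computing the common equation explicitly and comparing term by term, precisely because with four singular points the P-symbol alone is insufficient; your ``safer'' option is that same computation. Second, for the barred identities your route is heavier than necessary. Rather than matching logarithm coefficients and constant terms at $p=1$ (which does work, but requires the finer expansions of ${\rm Q}_\nu$ near argument $1$ and digamma identities to see the constants cancel), the paper exploits the fact that the defining singular points of $\xbar{\rm P}_{-1/4}$ and of $\xbar{\rm P}_{-1/2}$, resp.\ $\widehat Q_{-1/2}$ (namely $L=-1$, $R=-1$, $R=\infty$) all lie at the \emph{other} end of the relevant $p$-interval --- $p=\infty$ for $(\overline{i})$ and $p=0$ for $(\overline{ii})$ --- where both sides are bounded, non-logarithmic Frobenius solutions; matching leading behavior there via (\ref{eq:asympP}), (\ref{eq:asympPFerrers}), (\ref{eq:asympQhat}) immediately produces the constants $\frac12\csc(\pi/4)$ and $2/\pi$.

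One caution: your parenthetical fallback of matching the barred identities at $p=-1$ is not sound as stated. That point is not in the closure of either parameter interval $(1,\infty)$ or $(0,1)$; along the real axis $L(p)$ enters the cut $(-\infty,-1]$ for $p\in(-1,0)$, so reaching $p=-1$ forces a complex continuation, and since the exponents there are the half-integers $\frac12,\frac32$, the continued solutions are branched at $p=-1$ and a naive leading-power comparison does not fix the constant of proportionality. Stick with the $p=1$ log-matching, or better, with the paper's matching at the defining singular points.
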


\smallskip
\begin{remark*}
  Identities $I_4'(i),I_4'(\overline{i})$ were found by Ramanujan; see
  \cite[Chap.~33, Theorems 9.1 and~9.2]{BerndtV} and
  \cite[Lemma~2.1]{Zhou2014}, and compare
  \cite[Proposition~5.7(a)]{Borwein87}.  He used
  $(p-\nobreak1)/(p+\nobreak1)\in(0,1)$ as parameter.
\end{remark*}

To construct trigonometric versions of these identities, one substitutes
$L=\cos\theta$ into the relation~(\ref{eq:c4p}), and solves for~$R$ as a
function of~$\theta$.  This yields the following.

\begin{corollary}
  \label{cor:i4p}
  The following identities coming from\/ $\mathcal{C}'_4$ hold 
  when $\theta\in(0,\pi)$.
  \begin{align*}
    &I'_4(i):\quad {\rm P}_{-1/4}(\cos\theta) = \frac1{\sqrt{1+\sin(\theta/2)}}\,{\rm
      P}_{-1/2}\left(1-4\,\frac{\sin(\theta/2)}{1+\sin(\theta/2)} \right)   ;&\\
    &I'_4(\overline{i}):\quad \text{The same, with\/
      ${\rm P}_{-1/4},{\rm P}_{-1/2}$ replaced by\/ $\tfrac12\csc(\pi/4)\xbar{\rm P}_{-1/4},\xbar{\rm P}_{-1/2}$};&\\
    &I'_4(ii):\quad {\rm P}_{-1/4}(\cos\theta) = \frac1{\sqrt{1-\sin(\theta/2)}}\,{P}_{-1/2}\left(1+4\,\frac{\sin(\theta/2)}{1-\sin(\theta/2)} \right) ;&\\
    &I'_4(\overline{ii}):\quad \text{The same, with\/
      ${\rm P}_{-1/4},{P}_{-1/2}$ replaced by\/ $\tfrac12\csc(\pi/4)\xbar{\rm P}_{-1/4},(2/\pi)\widehat{Q}_{-1/2}$}.&
  \end{align*}
\end{corollary}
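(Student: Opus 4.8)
The plan is to deduce Corollary~\ref{cor:i4p} from Theorem~\ref{thm:i4p}, which I take as given, by a change of the curve parameter. The theorem already supplies, over each stated $p$-range, the identity $u_{-1/4}(L(p))=A(p)\,v_{-1/2}(R(p))$ with $L,R,A$ the rational functions of~$p$ attached to~$\mathcal{C}'_4$. Since a relation between Legendre and Ferrers functions does not depend on how the underlying curve is coordinatized, it suffices to impose $L=\cos\theta$, solve for the parameter in terms of~$\theta$ on the appropriate branch, and re-express $R$ and~$A$; the four labelled formulas then read off, and the remaining task is to check that $\theta\in(0,\pi)$ matches the $p$-interval named in each row of the theorem.

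First I would perform the change of variable. Setting $-1+8p/(p+1)^2=\cos\theta$ and using $1+\cos\theta=2\cos^2(\theta/2)$ gives $\cos^2(\theta/2)=4p/(p+1)^2$, hence $\sin^2(\theta/2)=(p-1)^2/(p+1)^2$. With $s\defeq\sin(\theta/2)\in(0,1)$ for $\theta\in(0,\pi)$, extracting the root on the branch $p>1$ gives $s=(p-1)/(p+1)$, i.e.\ $p=(1+s)/(1-s)$, and on the branch $p<1$ gives $s=(1-p)/(p+1)$, i.e.\ $p=(1-s)/(1+s)$. These two roots are reciprocals, consistent with the stated invariance of~$\mathcal{C}'_4$ under $p\mapsto1/p$, which fixes~$L$ and moves~$R$; this is exactly why the two branches produce identities with the same left-hand argument $\cos\theta$ but different right-hand arguments, namely cases~(i) and~(ii).

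Next I would substitute into $R=-1+2/p$ and $A=\sqrt{(1+p)/(2p)}$. A short computation gives, for $p=(1+s)/(1-s)$, the values $R=1-4s/(1+s)$ and $A=1/\sqrt{1+s}$, and for $p=(1-s)/(1+s)$, the values $R=1+4s/(1-s)$ and $A=1/\sqrt{1-s}$; these are precisely the right-hand arguments and prefactors displayed in $I'_4(i)$, $I'_4(\overline{i})$, $I'_4(ii)$ and $I'_4(\overline{ii})$. To finish, I would verify the ranges: since $dR/ds=-4/(1+s)^2<0$ in case~(i) and $dR/ds=4/(1-s)^2>0$ in case~(ii), as $s$ runs over $(0,1)$ the value $R$ runs monotonically from $1$ to $-1$ in the first case and from $1$ to $+\infty$ in the second, matching the $R$-ranges of the theorem; correspondingly $p$ sweeps $(1,\infty)$ and $(0,1)$. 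The barred identities need no separate argument, since they sit on the same curve with the same $(L,R,A)$ and only replace $u,v$ by the companion solution of the common differential equation, as already established in the theorem.

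The computations are routine, so the single delicate point is the branch choice in passing from $\sin^2(\theta/2)$ to $\sin(\theta/2)$: the two admissible signs are exactly what separate cases~(i) and~(ii), and one must confirm that each branch keeps $R(\theta)$ off the cut of the function on the right throughout $\theta\in(0,\pi)$ (the interval $(-1,1)$ for the Ferrers function ${\rm P}_{-1/2}$ of case~(i), the interval $(1,\infty)$ for the Legendre functions of case~(ii)). This is what the range check confirms, guaranteeing that the rationally parametrized identity transports to its trigonometric form without a spurious sign; analytic continuation then extends each identity to a complex neighbourhood of $(0,\pi)$, as noted before the subsection.
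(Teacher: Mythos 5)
Your proposal is correct and matches the paper's own derivation: the corollary is obtained from Theorem~\ref{thm:i4p} purely by re-parametrizing, setting $L=\cos\theta$ and expressing $R$ and $A$ in terms of $\sin(\theta/2)$, with the two branches (your $p=(1\pm s)/(1\mp s)$, equivalently the two roots of the quadratic $(L-1)(R+3)^2+2(R-1)^2=0$ in $R$) yielding cases (i) and (ii). Your explicit range and monotonicity checks, and the observation that the barred identities ride along on the same $(L,R,A)$ data, are exactly the verification the paper leaves implicit.
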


\begin{remark*}
  The right-hand arguments equal $-1 + 2\tan^2((\pi-\theta)/4)$ and $-1 +
  2\tan^2((\pi+\theta)/4)$, respectively.
\end{remark*}

\subsection{Signature-6 identities}

\begin{definition*}
  The algebraic $L$--$R$ curve $\mathcal{C}_6$ is the curve
  \begin{equation}
    \label{eq:c6}
    (L^2-1)(4R^2-3)^3 + 27(R^2-1)=0,
  \end{equation}
  which is rationally parametrized by
  \begin{displaymath}
    L=1-54\,\frac{p^2-1}{(p^2-3)^3} = -1 + 2\,\frac{p^4(p^2-9)}{(p^2-3)^3},
    \qquad R= \frac{3+p^2}{4p} = \pm1 \mp \frac{(1\mp p)(p\mp3)}{4p}.
  \end{displaymath}
  It is invariant under $L\mapsto\nobreak -L$ and $R\mapsto-R$, which are
  performed by $p\mapsto3/p$ and $p\mapsto-p$.  An associated prefactor
  function $A=A(p)$, with limit unity when $p\to1$ and $(L,R)\to(1,1)$, is
  \begin{displaymath}
    A(p) = \left[3^6\,\frac{(R^2-1)^2}{(L^2-1)^2}\right]^{1/24} \!=\: \sqrt{\frac{3-p^2}{2p}}.
  \end{displaymath}
\end{definition*}

\begin{theorem}
  \label{thm:i6}
  For each pair $u,v$ of Legendre or Ferrers functions listed below, an
  identity
  \begin{displaymath}
    u_{-1/6}^{-\alpha}(L(p)) = 3^{3\alpha/2}\, A(p)\, v_{2\alpha-1/2}^{-\alpha}(R(p))
  \end{displaymath}
  of type\/ $I_6$, coming from the curve\/ $\mathcal{C}_6$, holds for the
  specified range of values of the parameter\/ $p$.

\medskip\medskip
\begin{tabular}{llllll}
\hline
Label &$u_{-1/6}^{-\alpha}$ &$v_{2\alpha-1/2}^{-\alpha}$ &$p$ range & $L$ range & $R$ range\\
\hline
\hline
(i) & $P_{-1/6}^{-\alpha}$ & ${\rm P}_{2\alpha-1/2}^{-\alpha}$ & $(1,\sqrt{3})$ & $1<L<\infty$ & $1>R>\sqrt3/2$ \\
$(\overline{i})$ & $\csc(\pi/6)\widetilde P_{-1/6}^{-\alpha}$ & $(2/\pi){\rm Q}_{2\alpha-1/2}^{-\alpha}$ &  & & \\
\hline
(ii) & ${\rm P}_{-1/6}^{-\alpha}$ & ${P}_{2\alpha-1/2}^{-\alpha}$ & $(0,1)$ & $-1<L<1$ & $\infty>R>1$ \\
$(\overline{ii})$ & $\csc(\pi/6)\xbar{\rm P}_{-1/6}^{-\alpha}$ & $(2/\pi)\widehat{Q}_{2\alpha-1/2}^{-\alpha}$ &  & & \\
\hline
\end{tabular}
\end{theorem}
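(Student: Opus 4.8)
The plan is to follow the method of Section~\ref{sec:1}: transport both Legendre equations to the $p$-line through the rational parametrization of $\mathcal{C}_6$, prove that the two resulting second-order equations coincide, and then single out the asserted solution on each side from its behaviour at one singular point. Let $\mathcal{E}_L^{(0)}$ denote equation~(\ref{eq:ode}) in the variable $L$ with $(\nu,\mu)=(-1/6,-\alpha)$, and $\mathcal{E}_R^{(0)}$ the same equation in $R$ with $(\nu,\mu)=(2\alpha-1/2,-\alpha)$. Write $\mathcal{E}_L$ for the pullback of $\mathcal{E}_L^{(0)}$ under $L=L(p)$, and $\mathcal{E}_R$ for the pullback of $\mathcal{E}_R^{(0)}$ under $R=R(p)$ followed by the gauge change $w\mapsto A(p)\,w$ induced by the prefactor. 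Since every function named in the four rows is a solution of its own base equation, once $\mathcal{E}_L=\mathcal{E}_R$ is known the theorem reduces to comparisons of individual solutions.

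The heart of the matter is the equality $\mathcal{E}_L=\mathcal{E}_R$, and I would begin with the ramification of the two maps. Differentiation gives $L'(p)=216\,p^3/(p^2-3)^4$, so $L=L(p)$ is six-to-one, fully ramified of index~$3$ over $L=\infty$ at $p=\pm\sqrt3$, of index~$4$ over $L=1$ at $p=\infty$, and of index~$4$ over $L=-1$ at $p=0$, the remaining fibres $p=\pm1$ (over $L=1$) and $p=\pm3$ (over $L=-1$) being unramified; Riemann--Hurwitz is saturated, so no stray branch points occur. The degree-two map $R=R(p)$ is unramified over $R=\pm1,\infty$ and branches only over the ordinary points $R=\pm\sqrt3/2$, at $p=\pm\sqrt3$. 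Both pullbacks therefore have singular points exactly at $p\in\{0,\pm1,\pm3,\pm\sqrt3,\infty\}$, and carrying the base exponents $\{\pm\alpha/2\}$ (at $L,R=\pm1$), $\{1/6,5/6\}$ (at $L=\infty$) and $\{\tfrac12-2\alpha,\tfrac12+2\alpha\}$ (at $R=\infty$) through the maps, one finds the exponent pairs agree at every one of the eight points. This is exactly where the prefactor does its work: at $p=\pm\sqrt3$, where $R=R(p)$ double-covers an ordinary point, the pullback carries the apparent-singularity exponents $\{0,2\}$, and since $A$ has order $+\tfrac12$ there, multiplication by $A$ raises them to $\{1/2,5/2\}$, matching the index-$3$ pullback of $\{1/6,5/6\}$; at $p=0$ and $p=\infty$, where $A$ has order $-\tfrac12$, the exponents $\{\tfrac12-2\alpha,\tfrac12+2\alpha\}$ are lowered to $\{-2\alpha,2\alpha\}$, matching the index-$4$ pullback of $\{\pm\alpha/2\}$.

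Matching the Riemann scheme is necessary but, with eight singular points, not sufficient, as such an equation carries five accessory parameters. The clean way to close this gap is to compare projective normal forms, which are insensitive to the gauge factor $A$ (the ratio of two solutions is unchanged by a common multiplier): $\mathcal{E}_L$ and $\mathcal{E}_R$ will agree once one verifies the Schwarzian identity
\[
I_L(L(p))\,L'(p)^2+\tfrac12\{L;p\}=I_R(R(p))\,R'(p)^2+\tfrac12\{R;p\},
\]
where $I_L,I_R$ are the projective-normal-form potentials of the two base Legendre equations and $\{\,\cdot\,;p\}$ is the Schwarzian derivative. This is a single identity between explicit rational functions of $p$, forced by the defining relation~(\ref{eq:c6}); confirming it, most efficiently with computer algebra, is the main obstacle, and it is here that the particular shape of $\mathcal{C}_6$ is essential. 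The exponent bookkeeping above should cut the labour substantially, for it already guarantees that the two sides have matching double poles at all eight singular points, leaving only the finitely many residues (the accessory parameters) and the behaviour at infinity to be checked.

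With $\mathcal{E}_L=\mathcal{E}_R=:\mathcal{E}$ in hand, it remains to identify the distinguished solution on each side and to fix the constant $3^{3\alpha/2}$. For rows~(i),(ii) I would match leading behaviour as $p\to1$, where $L,R\to1$: one has $L-1\sim\tfrac{27}{2}(p-1)$ and $1-R\sim\tfrac12(p-1)$, so by~(\ref{eq:asympP}) and~(\ref{eq:asympPFerrers}) both sides are constant multiples of $(p-1)^{\alpha/2}$, and the constants coincide exactly because $A(1)=1$ while $3^{3\alpha/2}(1/2)^{\alpha/2}=(27/2)^{\alpha/2}$; this confirms~(i) and pins down $3^{3\alpha/2}$, and~(ii) is identical after interchanging the Ferrers and Legendre~$\mathrm{P}$. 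The barred and tilde rows $(\overline{i}),(\overline{ii})$ are the companion statement for the second basis solution of $\mathcal{E}$: the functions $\widetilde{P}_{-1/6}^{-\alpha},\xbar{\rm P}_{-1/6}^{-\alpha}$ of~(\ref{eq:widetildeP}),(\ref{eq:barP}) and the functions ${\rm Q}_{2\alpha-1/2}^{-\alpha},\widehat{Q}_{2\alpha-1/2}^{-\alpha}$ are the natural second solutions on the two sides, so once the first pair is matched they must correspond, the normalizing factors $\csc(\pi/6)$ and $2/\pi$ emerging from the second-solution asymptotics~(\ref{eq:asympQhat}) together with the defining combinations~(\ref{eq:barP}),(\ref{eq:widetildeP}). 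Finally, since every function in sight is analytic in $p$ and in $\alpha$, the identities extend by analytic continuation from the stated real interval, and from $\alpha$ in a half-plane, to all $\alpha\in\mathbb{C}$ and to the largest connected open $p$-set containing that interval on which neither argument meets a cut.
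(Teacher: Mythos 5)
Your overall architecture --- pull both Legendre equations back to the $p$-sphere along $L(p)$ and $R(p)$, verify that the two lifted equations coincide by an explicit rational-function computation (P-symbol agreement being insufficient when there are eight singular points), then identify solutions by matching Frobenius data at a singular endpoint --- is exactly the paper's (\S\,\ref{sec:derivation}). Your ramification and exponent bookkeeping is correct; your determination of the constant $3^{3\alpha/2}$ from $L-1\sim\tfrac{27}{2}(p-1)$ and $1-R\sim\tfrac12(p-1)$ is the paper's computation; and replacing the term-by-term comparison of $\mathcal{E}_L,\mathcal{E}_R$ by a comparison of projective normal forms is a legitimate, gauge-free variant (though to conclude $\mathcal{E}_L=\mathcal{E}_R$ rather than mere projective equivalence you should add the remark that a gauge factor whose order vanishes at every point of $\mathbb{P}^1$ is necessarily constant --- your exponent matching supplies exactly that hypothesis). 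Rows (i), (ii) and $(\overline{ii})$ are therefore handled correctly: in particular, for $(\overline{ii})$ both $\xbar{\rm P}_{-1/6}^{-\alpha}(L(p))$ and $A(p)\widehat{Q}_{2\alpha-1/2}^{-\alpha}(R(p))$ are Frobenius solutions at $p=0$ with exponent $2\alpha$, and (\ref{eq:asympQhat}) fixes the constant.

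The genuine gap is row $(\overline{i})$. Your claim that ``once the first pair is matched the second solutions must correspond'' is not a valid step: every solution of the common equation is a two-parameter linear combination of a basis, and knowing that one pair of solutions agrees says nothing about whether two other prescribed solutions are proportional, let alone with what constant. Worse, the Frobenius-matching mechanism you invoke is unavailable here: on the relevant interval $p\in(1,\sqrt3)$, neither $\widetilde P_{-1/6}^{-\alpha}(L(p))$ nor ${\rm Q}_{2\alpha-1/2}^{-\alpha}(R(p))$ is a Frobenius solution at either endpoint. At $p=1$ each is a nontrivial mixture of the exponents $\pm\alpha/2$ (by (\ref{eq:widetildeP}), $\widetilde P$ is a combination of $P$ and $\widehat Q$, and the Ferrers function ${\rm Q}_\nu^\mu$ is not a Frobenius solution at $z=1$), while the other endpoint $p=\sqrt3$ is only an apparent singularity: it lies over the \emph{ordinary} point $R=\sqrt3/2$, so the $R$-range never reaches the singular point $R=-1$ and there is no defining behavior to compare there. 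For the same reason (\ref{eq:asympQhat}) is the wrong tool: it describes $\widehat Q$ as $z\to\infty$, but here $R$ stays in $(\sqrt3/2,1)$. What is actually needed --- and what the paper does under ``Finer asymptotics'' in \S\,\ref{sec:derivation} --- is a two-term comparison at $p=1$: expand both sides in the two Frobenius series with exponents $\pm\alpha/2$, using the $z\to1$ connection formula (\ref{eq:512}) for $\widehat Q_\nu^\mu$ and its Ferrers analogue for ${\rm Q}_\nu^\mu$, and check that \emph{both} coefficients agree; the function $\widetilde P$ was defined in (\ref{eq:widetildeP}) precisely so that this two-coefficient match succeeds, and that is where the factors $\csc(\pi/6)$ and $2/\pi$ really come from. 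Without this step your argument establishes only three of the four rows of Theorem~\ref{thm:i6}.
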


\smallskip
To construct trigonometric versions of these identities, one substitutes
$L=\cosh\xi$ and $L=\cos\theta$ into the relation~(\ref{eq:c6}), and solves
for~$R$ as a function of $\xi$ or~$\theta$.  This yields the following.

\begin{corollary}
  \label{cor:i6}
  The following identities coming from\/ $\mathcal{C}_6$ hold for
  $\alpha\in\mathbb{C}$, when $\xi\in(0,\infty)$ and $\theta\in(0,\pi)$.
  \begin{align*}
    &I_6(i):\quad P_{-1/6}^{-\alpha}(\cosh\xi) = 3^{3\alpha/2}\,\sqrt[4]{\frac{3\sinh(\xi/3)}{\sinh\xi}}\:
{\rm P}_{2\alpha-1/2}^{-\alpha}\left(
\sqrt{\frac{3\sinh(\xi/3)}{\sinh\xi}}\,\cosh(\xi/3)
\right);&\\
    &I_6(\overline{i}):\quad \text{The same, with
      $P_{-1/6}^{-\alpha},{\rm P}_{2\alpha-1/2}^{-\alpha}$ replaced by\/ $\csc(\pi/6)\widetilde
      P_{-1/6}^{-\alpha},(2/\pi){\rm Q}_{2\alpha-1/2}^{-\alpha}$};&\\
    &I_6(ii):\quad {\rm P}_{-1/6}^{-\alpha}(\cos\theta) = 3^{3\alpha/2}\,\sqrt[4]{\frac{3\sin(\theta/3)}{\sin\theta}}\:
{P}_{2\alpha-1/2}^{-\alpha}\left(
\sqrt{\frac{3\sin(\theta/3)}{\sin\theta}}\,\cos(\theta/3)
\right);&\\
    &I_6(\overline{ii}):\quad \text{The same, with\/
      ${\rm P}_{-1/6}^{-\alpha},{P}_{2\alpha-1/2}^{-\alpha}$ replaced by\/ $\csc(\pi/6)\xbar{\rm P}_{-1/6}^{-\alpha},(2/\pi)\widehat{Q}_{2\alpha-1/2}^{-\alpha}$}.&
  \end{align*}
\end{corollary}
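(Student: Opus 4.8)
The plan is to derive Corollary~\ref{cor:i6} from Theorem~\ref{thm:i6} by a mere change of parameter on the curve $\mathcal{C}_6$: the labelled function pairs and the prefactor $3^{3\alpha/2}A(p)$ are common to all four identities $I_6(i),I_6(\overline i),I_6(ii),I_6(\overline{ii})$, so once the arguments $L,R$ and the prefactor are rewritten in terms of $\xi$ (for the hyperbolic pair (i)/$(\overline i)$) and of $\theta$ (for the circular pair (ii)/$(\overline{ii})$), all four follow simultaneously. Thus it suffices to carry out two argument computations; the bar- and tilde-versions need no separate treatment, since they differ from their partners only in the named Legendre/Ferrers functions, which Theorem~\ref{thm:i6} already pairs correctly.

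For the hyperbolic case I would first note that $\xi\mapsto L=\cosh\xi$ is a real-analytic bijection of $(0,\infty)$ onto $(1,\infty)$, matching the $L$-range of line~(i), and that the rational map $p\mapsto L$ of the Definition is likewise a bijection of $(1,\sqrt3)$ onto $(1,\infty)$; hence an identity valid for all $p$ in its interval transfers to one valid for all $\xi\in(0,\infty)$. Substituting $L=\cosh\xi$, so that $L^2-1=\sinh^2\xi$, into the defining relation~(\ref{eq:c6}) and setting $X\defeq 4R^2-3$ converts it into the cubic $4\sinh^2\xi\,X^3+27X-27=0$. The crux is to exhibit the relevant root in closed form: using the triplication identity $\sinh\xi=\sinh(\xi/3)\bigl(4\sinh^2(\xi/3)+3\bigr)$ one checks directly that $X=3\sinh(\xi/3)/\sinh\xi$ is a root. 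Then $R^2=(X+3)/4$ simplifies, via $1+\sinh^2(\xi/3)=\cosh^2(\xi/3)$, to $R^2=\frac{3\sinh(\xi/3)}{\sinh\xi}\cosh^2(\xi/3)$, which is the stated argument; and inserting $R^2-1=(X-1)/4=-\sinh^3(\xi/3)/\sinh\xi$ together with $L^2-1=\sinh^2\xi$ into $A=\bigl[3^6(R^2-1)^2/(L^2-1)^2\bigr]^{1/24}$ collapses it to $A=\sqrt[4]{3\sinh(\xi/3)/\sinh\xi}$. Finally one confirms that $R$ decreases from $1$ to $\sqrt3/2$ across $\xi\in(0,\infty)$, in agreement with the $R$-range of the Theorem.

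The circular case runs in parallel, with the hyperbolic triplication replaced by $\cos\theta=4\cos^3(\theta/3)-3\cos(\theta/3)$ and $\sin\theta=\sin(\theta/3)\bigl(3-4\sin^2(\theta/3)\bigr)$. Substituting $L=\cos\theta$, whence $L^2-1=-\sin^2\theta$, turns~(\ref{eq:c6}) into $-4\sin^2\theta\,X^3+27X-27=0$, whose relevant root is $X=3\sin(\theta/3)/\sin\theta$; the same algebra then yields $R=\sqrt{3\sin(\theta/3)/\sin\theta}\,\cos(\theta/3)$ and $A=\sqrt[4]{3\sin(\theta/3)/\sin\theta}$, with $\theta\mapsto\cos\theta$ mapping $(0,\pi)$ onto $(-1,1)$ and $R$ running from $1$ up to $+\infty$, as in line~(ii). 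Throughout, the positive real fourth root is the correct choice because $3\sinh(\xi/3)/\sinh\xi$ and $3\sin(\theta/3)/\sin\theta$ are positive on the stated intervals, consistent with $A(p)=\sqrt{(3-p^2)/2p}>0$ there.

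The one genuinely delicate point I anticipate is the selection of the correct root of the cubic in $X$ (and its circular analogue): a cubic has three roots, and I must be sure that the closed form written above is exactly the branch singled out by the rational parametrization $R=(3+p^2)/4p$ under the $p\leftrightarrow\xi$ (resp. $p\leftrightarrow\theta$) correspondence, and that it remains real with the correct sign over the whole interval. The triplication identity is precisely what dispels this worry, since it produces the desired root explicitly and certifies that no spurious branch has crept in; granting that root, the remaining reductions of $R$ and $A$ to the stated trigonometric forms are routine algebra.
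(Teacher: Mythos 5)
Your proposal is correct and follows essentially the same route as the paper, whose entire argument for Corollary~\ref{cor:i6} is the instruction to substitute $L=\cosh\xi$ (resp.\ $L=\cos\theta$) into the curve relation~(\ref{eq:c6}) and solve for $R$; you simply carry out those computations explicitly, and your triplication-identity root $X=3\sinh(\xi/3)/\sinh\xi$, the reduction $R^2=\frac{3\sinh(\xi/3)}{\sinh\xi}\cosh^2(\xi/3)$, the prefactor collapse $A=\sqrt[4]{3\sinh(\xi/3)/\sinh\xi}$, and the circular analogues are all verified correctly, with the sign and range checks ensuring the branch matches the parametrization $R=(3+p^2)/4p$ on the stated intervals.
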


\begin{remark*}
  The right-hand arguments equal
  $[1+\frac13\tanh^2(\xi/3)]^{-1/2}$ and
  $[1-\frac13\tan^2(\theta/3)]^{-1/2}$, respectively.
\end{remark*}

\begin{definition*}
  The algebraic $L$--$R$ curve $\mathcal{C}_6'$ is the curve
  \begin{equation}
    \label{eq:c6p}
    (L^2-1)(R^2+3)^3 + 27(R^2-1)^2=0,
  \end{equation}
  which is rationally parametrized by
  \begin{displaymath}
    L=1-54\,\frac{(p^2-1)^2}{(p^2+3)^3}= -1 +
    2\,\frac{p^2(p^2-9)^2}{(p^2+3)^3},
    \qquad R= \frac{3-p^2}{2p} = \pm1 \mp \frac{(p\mp1)(3\pm p)}{2p}.
  \end{displaymath}
  It is invariant under $L\mapsto-L$ and $R\mapsto-R$, which are performed
  by $p\mapsto-3/p$ and $p\mapsto-p$, and in~fact under any of the M\"obius
  transformations of~$p$ that permute $p=-3,-1,0,1,3,\infty$, the vertices
  of a regular hexagon on the $p$-sphere.  (Each of these, which form a
  dihedral group of order~$12$, induces a M\"obius transformation of~$L$,
  either $L\mapsto L$ or~$L\mapsto-L$, and one of~$R$.)  An associated
  prefactor function $A=A(p)$, with limit unity when $p\to1$ and
  $(L,R)\to(1,1)$, is
  \begin{displaymath}
    A(p) =
    \left[\frac{3^6}{2^{12}}\,\frac{(R^2-1)^4}{(L^2-1)^2}\right]^{1/24} \!=\:
    \sqrt{\frac{3+p^2}{4p}}.
  \end{displaymath}
\end{definition*}

\begin{theorem}
  \label{thm:i6p}
  For each pair $u,v$ of Legendre or Ferrers functions listed below, an
  identity
  \begin{displaymath}
    u_{-1/6}^{-\alpha}(L(p)) = 3^{3\alpha/2}\,\frac{\Gamma(\alpha+\frac12)}{\sqrt\pi}\, A(p)\, v_{\alpha-1/2}^{-2\alpha}(R(p))
  \end{displaymath}
  of type\/ $I'_6$, coming from the curve\/ $\mathcal{C}'_6$, holds for the
  specified range of values of the parameter\/ $p$.

\medskip\medskip
\begin{tabular}{llllll}
\hline
Label &$u_{-1/6}^{-\alpha}$ &$v_{\alpha-1/2}^{-2\alpha}$ &$p$ range & $L$ range & $R$ range\\
\hline
\hline
(i) & ${\rm P}_{-1/6}^{-\alpha}$ & ${\rm P}_{\alpha-1/2}^{-2\alpha}$ & $(1,3)$ & $1>L>-1$ & $1>R>-1$ \\
$(\overline{i})$ & $\frac12\csc(\pi/6)\xbar{\rm P}_{-1/6}^{-\alpha}$ & $\xbar{\rm P}_{\alpha-1/2}^{-2\alpha}$ &  &  &  \\
\hline
(ii) & ${\rm P}_{-1/6}^{-\alpha}$ & ${P}_{\alpha-1/2}^{-2\alpha}$ & $(0,1)$ & $-1<L<1$ & $\infty>R>1$ \\
$(\overline{ii})$ & $\frac12\csc(\pi/6)\xbar{\rm P}_{-1/6}^{-\alpha}$ & $(2/\pi)\cos(\alpha\pi)\widehat{Q}_{\alpha-1/2}^{-2\alpha}$ &  & & \\
\hline
\end{tabular}
\end{theorem}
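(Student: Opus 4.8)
The plan is to follow the Whipple-style method sketched in Section~\ref{sec:1}: for each of the four function pairs, exhibit both sides of the claimed identity, regarded as functions of the parameter $p$, as solutions of one and the same second-order Fuchsian equation on the $p$-sphere, and then identify which solution each side is by a single asymptotic comparison. Concretely, substituting $z=L(p)$ into Legendre's equation~\eqref{eq:ode} with $(\nu,\mu)=(-1/6,-\alpha)$ produces an equation $\mathcal{E}_L$ satisfied by $u_{-1/6}^{-\alpha}(L(p))$, while substituting $z=R(p)$ with $(\nu,\mu)=(\alpha-\tfrac12,-2\alpha)$ and then gauging by the prefactor $A(p)=\sqrt{(3+p^2)/(4p)}$ produces an equation $\mathcal{E}_R$ satisfied by $A(p)\,v_{\alpha-1/2}^{-2\alpha}(R(p))$; the scalar $3^{3\alpha/2}\Gamma(\alpha+\tfrac12)/\sqrt\pi$ is immaterial to the equation. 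The whole content of the theorem is that $\mathcal{E}_L=\mathcal{E}_R$ and that the two sides are the \emph{same} solution of it.

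To organize the comparison I would read the ramification of the two parametrizing maps off~\eqref{eq:c6p}. The map $p\mapsto L$ has degree~$6$: the preimages of $L=\pm1$ are $p\in\{0,\pm1,\pm3,\infty\}$, each of ramification index~$2$, and the preimages of $L=\infty$ are $p=\pm i\sqrt3$, of index~$3$; the map $p\mapsto R$ has degree~$2$, is unramified over $R=\pm1,\infty$, and ramifies (index~$2$) only over $R=\pm i\sqrt3$, at $p=\mp i\sqrt3$. Multiplying the upstairs Legendre exponents by these indices and shifting by the order of $A$ at each point, I would check that $\mathcal{E}_L$ and $\mathcal{E}_R$ carry identical Riemann schemes: exponents $\{\alpha,-\alpha\}$ at each of $0,\pm1,\pm3,\infty$, and $\{1/2,5/2\}$ at $\pm i\sqrt3$. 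The decisive feature is that the doubling of the order, $-\alpha\mapsto-2\alpha$, on the right is exactly offset by the index-$2$ ramification of $p\mapsto L$ over $L=\pm1$ on the left; this is the arithmetic that only the special curve~$\mathcal{C}'_6$ supplies.

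The hard part will be upgrading this coincidence of Riemann schemes to the genuine equality $\mathcal{E}_L=\mathcal{E}_R$. With eight singular points the scheme alone does not determine a second-order Fuchsian equation (five accessory parameters remain free), so matching exponents is necessary but not sufficient. What controls the argument is that at $p=\pm i\sqrt3$ both equations have logarithm-free local solutions and scalar local monodromy $-I$: on the $L$ side the index-$3$ pullback converts the non-integer gap $\tfrac56-\tfrac16$ into the integer gap $2$ with each monodromy eigenvalue cubed to $-1$, and on the $R$ side the index-$2$ pullback over an ordinary point, followed by the half-integer gauge, does the same. Hence these points carry no projective monodromy, and since each operator is \emph{fully determined} as the pullback of the rigid Legendre (hypergeometric) equation under a fixed rational map, the equality can be confirmed outright by computing the two pulled-back operators and checking that their coefficients agree. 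I regard this verification as the real obstacle, and it is precisely what the form of~\eqref{eq:c6p} guarantees.

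Granting $\mathcal{E}_L=\mathcal{E}_R\eqdef\mathcal{E}$, each identity reduces to selecting the correct element of the two-dimensional solution space, which I would do at the common point $p=1$, where $L(1)=R(1)=1$ and $A(1)=1$. For $I'_6(i)$ and $I'_6(ii)$ the left-hand ${\rm P}_{-1/6}^{-\alpha}$ is recessive at $L=1$ with leading term~\eqref{eq:asympPFerrers}, the right-hand ${\rm P}_{\alpha-1/2}^{-2\alpha}$ (resp.\ ${P}_{\alpha-1/2}^{-2\alpha}$) is recessive at $R=1$ with leading term~\eqref{eq:asympPFerrers} (resp.~\eqref{eq:asympP}), and both sides have leading behaviour of order $(1-p)^{\alpha}$; using the local expansions $1-L\sim\tfrac{27}{8}(1-p)^2$ and $R-1\sim2(1-p)$ at $p=1$ and simplifying by the duplication formula $\Gamma(2\alpha+1)=\pi^{-1/2}2^{2\alpha}\Gamma(\alpha+\tfrac12)\Gamma(\alpha+1)$, the leading coefficients agree precisely when the constant equals $3^{3\alpha/2}\Gamma(\alpha+\tfrac12)/\sqrt\pi$, as stated, which explains the appearance of the $\Gamma(\alpha+\tfrac12)/\sqrt\pi$ factor. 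The barred companions $I'_6(\overline{i})$ and $I'_6(\overline{ii})$ then follow by matching the \emph{dominant} solution (exponent $-\alpha$) at $p=1$: the definitions~\eqref{eq:barP},\eqref{eq:widetildeP}, the asymptotics~\eqref{eq:asympQhat}, and the Ferrers-to-Legendre conversion~\eqref{eq:ferrers} identify $\xbar{\rm P}_{-1/6}^{-\alpha}$ and $\widehat{Q}_{\alpha-1/2}^{-2\alpha}$ as the relevant second solutions and fix the factors $\tfrac12\csc(\pi/6)$ and $(2/\pi)\cos(\alpha\pi)$, the last $\cos(\alpha\pi)$ coming from~\eqref{eq:ferrers}. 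Finally I would record the stated $p$-, $L$- and $R$-ranges, noting that the branches of the fractional powers are consistent there, and extend each identity off the real interval by analytic continuation.
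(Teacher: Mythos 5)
Your framework is the paper's own: lift Legendre's equation along $z=L(p)$, and along $z=R(p)$ followed by the gauge $A(p)$, verify $\mathcal{E}_L=\mathcal{E}_R$, and then identify each side of each identity as one and the same Frobenius solution. Your ramification bookkeeping is correct (degree $6$ over $L$, degree $2$ over $R$; exponents $\{\pm\alpha\}$ at $p=0,\pm1,\pm3,\infty$ and $\{1/2,5/2\}$ at $p=\pm{\rm i}\sqrt3$), you rightly insist that with more than three singular points the Riemann scheme does not determine the operator, so that $\mathcal{E}_L=\mathcal{E}_R$ must be confirmed by explicitly computing both pullbacks (the paper records the common result as the displayed equation with the $15/(p^2+3)^2$ term), and your handling of $I'_6(i)$ and $I'_6(ii)$ --- recessive solutions matched at $p=1$, using $1-L\sim\tfrac{27}{8}(1-p)^2$, $R-1\sim2(1-p)$ and the duplication formula to force the constant $3^{3\alpha/2}\Gamma(\alpha+\tfrac12)/\sqrt\pi$ --- is exactly the paper's calculation.

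The gap is in the barred identities. You propose to obtain $I'_6(\overline{i})$ and $I'_6(\overline{ii})$ by ``matching the dominant solution (exponent $-\alpha$) at $p=1$.'' That step fails as stated: agreement of the coefficient of the dominant branch $(1-p)^{-\alpha}$ at one point does not identify a solution of a second-order equation, since one can add any multiple of the recessive solution $(1-p)^{\alpha}$ without changing the dominant asymptotics. Moreover $\xbar{\rm P}_{-1/6}^{-\alpha}(L(p))$ and $\widehat{Q}_{\alpha-1/2}^{-2\alpha}(R(p))$ are not Frobenius solutions at $p=1$ at all; each is a nontrivial combination of both exponents there, so ``the relevant second solution'' is not determined by data at $p=1$. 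Making your route rigorous would require the two-term (fine) asymptotics at $p=1$, i.e.\ connection formulas giving both Frobenius coefficients of each side --- the machinery the paper reserves for the genuinely anomalous identities $I_r(\overline{i})$ of Theorems \ref{thm:i4}, \ref{thm:i6} and~\ref{thm:i3}, where no alternative exists. For Theorem~\ref{thm:i6p} the paper's identification is both simpler and rigorous: match each barred identity at the defining singular point of its functions, namely $p=3$ for $I'_6(\overline{i})$ (where $L=-1$ and $R=-1$, so both $\xbar{\rm P}$'s are pure Frobenius solutions of exponent $\alpha$) and $p=0$ for $I'_6(\overline{ii})$ (where $L=-1$ and $R=\infty$, so $\xbar{\rm P}_{-1/6}^{-\alpha}$ and $\widehat{Q}_{\alpha-1/2}^{-2\alpha}$ are pure Frobenius solutions of exponent $\alpha$, the gauge $A\sim\sqrt{3}/(2\sqrt p)$ shifting $\alpha+\tfrac12$ down to $\alpha$). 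There the constants follow from leading-order asymptotics alone; in particular the factor $\cos(\alpha\pi)$ emerges from the reflection formula $\Gamma(\tfrac12+\alpha)\Gamma(\tfrac12-\alpha)\cos(\alpha\pi)=\pi$ applied to the $\Gamma(\nu+\mu+1)=\Gamma(\tfrac12-\alpha)$ in~\eqref{eq:asympQhat}, not from the Ferrers--Legendre conversion~\eqref{eq:ferrers} as you assert.
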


\smallskip
\begin{remark*}
The factors $\frac12\csc(\pi/6)$ equal unity and can be omitted; they are
included for consistency with the other theorems in this section.  When
$\alpha=-\frac12,-\frac32,\dots$, the gamma function diverges, but each
$v_{\alpha-1/2}^{-2\alpha}$ is identically zero and the identities are
still valid in a limiting sense.

The $\alpha=0$ case of identities $I_6'(i),I_6'(\overline{i})$ was found by
Ramanujan; see \cite[Chap.~33, Theorem~11.1 and Corollary~11.2]{BerndtV}
and \cite[Lemma~2.1]{Zhou2014}, and compare
\cite[Proposition~5.8]{Borwein87}.  He used $(p-\nobreak1)/2\in(0,1)$ as
parameter.  The generalization to arbitrary~$\alpha$ was given in
hypergeometric notation by Garvan~\cite[(2.32)]{Garvan95}.
\end{remark*}

To construct trigonometric versions of these identities, one substitutes
$L=\cos\theta$ into the relation~(\ref{eq:c6p}), and solves for~$R$ as a
function of~$\theta$.  This yields the following.

\begin{corollary}
  \label{cor:i6p}
  The following identities coming from\/ $\mathcal{C}'_6$ hold for
  $\alpha\in\mathbb{C}$, when $\theta\in(0,\pi)$.
  \begin{align*}
    &I'_6(i):\quad {\rm P}_{-1/6}^{-\alpha}(\cos\theta) = 3^{3\alpha/2}\,\frac{\Gamma(\alpha+\frac12)}{\sqrt\pi}\,\sqrt{\frac{\cos(\pi/6)}{\cos(\pi/6-\theta/3)}}\:
{\rm P}_{\alpha-1/2}^{-2\alpha}\left(
1-2\,\frac{\sin(\theta/3)}{\cos(\pi/6-\theta/3)}
\right);&\\
    &I'_6(\overline{i}):\quad \text{The same, with\/
      ${\rm P}_{-1/6}^{-\alpha},{\rm P}_{\alpha-1/2}^{-2\alpha}$ replaced
  by\/ $\tfrac12\csc(\pi/6)\xbar{\rm P}_{-1/6}^{-\alpha},\xbar{\rm P}_{\alpha-1/2}^{-2\alpha}$};&\\
    &I'_6(ii):\quad {\rm P}_{-1/6}^{-\alpha}(\cos\theta) = 3^{3\alpha/2}\,\frac{\Gamma(\alpha+\frac12)}{\sqrt\pi}\,\sqrt{\frac{\cos(\pi/6)}{\cos(\pi/6+\theta/3)}}\:
{P}_{\alpha-1/2}^{-2\alpha}\left(
1+2\,\frac{\sin(\theta/3)}{\cos(\pi/6+\theta/3)}
\right);&\\
    &I'_6(\overline{ii}):\quad \text{The same, with\/
      ${\rm P}_{-1/6}^{-\alpha},{P}_{\alpha-1/2}^{-2\alpha}$ replaced
  by\/ $\tfrac12\csc(\pi/6)\xbar{\rm P}_{-1/6}^{-\alpha},(2/\pi)\cos(\alpha\pi)\widehat{Q}_{\alpha-1/2}^{-2\alpha}$}.&
  \end{align*}
\end{corollary}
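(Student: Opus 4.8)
The plan is to obtain Corollary~\ref{cor:i6p} purely as a reparametrization of Theorem~\ref{thm:i6p}: the rational parameter $p$ is replaced by an angular parameter $\theta$ via $L=\cos\theta$, and every ingredient is then transcribed. The key structural observation is that in Theorem~\ref{thm:i6p} the prefactor $A$ is given in closed form as a function of the point $(L,R)$ on $\mathcal{C}'_6$ alone, namely $A=[\,3^6 2^{-12}(R^2-1)^4/(L^2-1)^2\,]^{1/24}$. Consequently, once I know the value of $R$ corresponding to $L=\cos\theta$ and can evaluate this expression along the curve, the identity of the theorem rewrites verbatim in the new parameter. The whole task therefore reduces to (a)~solving the defining relation~(\ref{eq:c6p}) for $R$ when $L=\cos\theta$, choosing the branch that matches the $p$-ranges of Theorem~\ref{thm:i6p}, and (b)~simplifying $A$ accordingly.

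For step~(a) I would set $L=\cos\theta$, so $L^2-1=-\sin^2\theta$, and introduce $\psi\defeq\pi/6-\theta/3$, which gives $\sin\theta=\cos(3\psi)$. Inserting the trial value $R=\sqrt3\,\tan\psi$ into~(\ref{eq:c6p}) and using the triple-angle identity in the form $4\cos^2\psi-3=\cos(3\psi)/\cos\psi$, one gets $R^2+3=3/\cos^2\psi$ and $R^2-1=-\cos(3\psi)/\cos^3\psi$; hence $(L^2-1)(R^2+3)^3=-27\cos^2(3\psi)/\cos^6\psi$ cancels $27(R^2-1)^2=27\cos^2(3\psi)/\cos^6\psi$ identically, confirming that this $R$ lies on $\mathcal{C}'_6$. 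The elementary rearrangement $\sqrt3\,\tan(\pi/6-\theta/3)=1-2\sin(\theta/3)/\cos(\pi/6-\theta/3)$ then produces exactly the right-hand argument of $I'_6(i)$. For step~(b) the same two expressions give $(R^2-1)^4/(L^2-1)^2=\cos^{-12}\psi$, so $A=[\,3^6 2^{-12}\cos^{-12}\psi\,]^{1/24}=3^{1/4}/\sqrt{2\cos\psi}=\sqrt{\cos(\pi/6)/\cos(\pi/6-\theta/3)}$, which is the claimed prefactor.

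The Legendre case $I'_6(ii)$ comes from the second real branch $R=\sqrt3\,\tan(\pi/6+\theta/3)$, which is produced by the curve's symmetry $p\mapsto-p$ (i.e.\ $R\mapsto-R$). The identical computation, now with $\psi$ replaced by $\pi/6+\theta/3$, verifies it and yields the stated argument $1+2\sin(\theta/3)/\cos(\pi/6+\theta/3)$ together with its prefactor. The barred identities $I'_6(\overline{i}),I'_6(\overline{ii})$ need no additional work, since the substitution acts on the corresponding rows of the table in Theorem~\ref{thm:i6p} in exactly the same way. To finish I would verify the endpoint correspondence: as $\theta$ runs over $(0,\pi)$, the $(i)$ branch carries $(L,R)$ from $(1,1)$ to $(-1,-1)$, matching the arc $p\in(1,3)$ and keeping $R\in(-1,1)$ (Ferrers), while the $(ii)$ branch drives $R$ from $1$ up to $+\infty$, matching $p\in(0,1)$ and keeping $R>1$ (Legendre)---precisely the dichotomy demanded by the theorem.

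The main obstacle I anticipate is bookkeeping of branches and signs rather than any genuine difficulty. Relation~(\ref{eq:c6p}) is a cubic in $R^2$, hence sextic in $R$, so for each $\theta$ there are several roots, and I must argue that $R=\sqrt3\,\tan(\pi/6\mp\theta/3)$ is the correct one. I would pin this down using the normalization built into the definition of $\mathcal{C}'_6$, namely $(L,R)\to(1,1)$ and $A\to1$ as $\theta\to0^+$, together with continuity along the real interval, which guarantees that no branch point of the square roots is crossed for $\theta\in(0,\pi)$. The only other point requiring care is the consistent selection of the positive square roots appearing in $A$ and in $\sqrt{\cos(\pi/6)/\cos(\pi/6-\theta/3)}$, and this too is fixed once and for all by the $\theta\to0^+$ limit.
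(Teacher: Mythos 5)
Your proposal is correct and is essentially the paper's own proof: the paper obtains Corollary~\ref{cor:i6p} precisely by substituting $L=\cos\theta$ into the relation~(\ref{eq:c6p}), solving for $R$, and transcribing Theorem~\ref{thm:i6p} with the prefactor $A$ evaluated along the curve, which is exactly what your $\psi$-substitution, triple-angle verification, prefactor simplification, and endpoint-based branch selection carry out in detail. (One harmless slip: the branch $R=\sqrt3\tan(\pi/6+\theta/3)$ used for $I_6'(ii)$ is \emph{not} the image of the $I_6'(i)$ branch under the symmetry $R\mapsto-R$, which would instead give $\sqrt3\tan(\theta/3-\pi/6)\in(-1,1)$; the symmetry $p\mapsto-p$ relates the interval $(0,1)$ to $(-1,0)$, not to $(1,3)$ --- but since you verify the $(ii)$ branch directly on the curve and match its range $R\in(1,\infty)$ and endpoints to the theorem's table, this aside does not affect the argument.)
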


\begin{remark*}
  The right-hand arguments equal $\sqrt3\cot((\pi+\theta)/3)$ and
  $\sqrt3\allowbreak\cot((\pi-\nobreak\theta)/3)$, respectively. The unit
  factors $\frac12\csc(\pi/6)$ are included for consistency with the other
  corollaries in this section.
\end{remark*}

\subsection{Signature-3 identities}

\begin{definition*}
  The algebraic $L$--$R$ curve $\mathcal{C}_3$ is the curve
  \begin{equation}
    \label{eq:c3}
    27(4L-5)^3(R^2-1) - 4 (L-1)(L+1)^3 (4R^2-3)^3 = 0,
  \end{equation}
  which is rationally parametrized by
  \begin{displaymath}
    L=1-54\,\frac{p^2-1}{(p^2-3)^3} = -1 + 2\,\frac{p^4(p^2-9)}{(p^2-3)^3},
    \qquad R= \pm1\mp \frac{(p\mp 1)(3\pm p)^3}{8p^3},
  \end{displaymath}
  and is invariant under $R\mapsto-R$, which is performed by $p\mapsto -p$.
  An associated prefactor function $A=A(p)$, with limit unity when $p\to1$
  and $(L,R)\to(1,1)$, is
  \begin{displaymath}
    A(p) = \left[\frac{3^6}{2^4}\,{\frac{(R-1)^2(R+1)^2}{(L-1)^2(L+1)^6}}\right]^{1/24} \!=\: \sqrt{\frac{(3-p^2)^2}{4p^3}}.
  \end{displaymath}
\end{definition*}

\begin{theorem}
  \label{thm:i3}
  For each pair $u,v$ of Legendre or Ferrers functions listed below, an
  identity
  \begin{displaymath}
    u_{-1/3}(L(p)) = A(p)\, v_{-1/2}(R(p))
  \end{displaymath}
  of type\/ $I_3$, coming from the curve\/ $\mathcal{C}_3$, holds for the
  specified range of values of the parameter\/ $p$.

\medskip\medskip
\begin{tabular}{llllll}
\hline
Label &$u_{-1/3}$ &$v_{-1/2}$ &$p$ range & $L$ range & $R$ range\\
\hline
\hline
(i) & $P_{-1/3}$ & ${\rm P}_{-1/2}$ & $(1,\sqrt{3})$ & $1<L<\infty$ & $1>R>-\sqrt3/2$ \\
$(\overline{i})$ & $\csc(\pi/3)\widetilde P_{-1/3}$ & $(2/\pi){\rm Q}_{-1/2}$ &  & & \\
\hline
(ii) & ${\rm P}_{-1/3}$ & ${P}_{-1/2}$ & $(0,1)$ & $-1<L<1$ & $\infty>R>1$ \\
$(\overline{ii})$ & $\csc(\pi/3)\xbar{\rm P}_{-1/3}$ & $(2/\pi)\widehat{Q}_{-1/2}$ &  & & \\
\hline
\end{tabular}
\end{theorem}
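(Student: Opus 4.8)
The plan is to apply the algebraic-curve method described in the Introduction and already deployed for $r=4,6$ in Theorems~\ref{thm:i4} and~\ref{thm:i6}: I would regard each of the four claimed identities as an equality between two solutions of a single second-order linear ODE in the parameter~$p$, and then identify those two solutions by a single asymptotic comparison. Throughout, the order is zero, as the notation of the theorem indicates; this is the feature that, as noted in \S\ref{sec:mainresults}, distinguishes the $r=3$ case from the others.

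First I would show that the two sides obey the same differential equation in~$p$. The left member $u_{-1/3}(L)$ solves Legendre's equation~(\ref{eq:ode}) with $\nu=-1/3$, $\mu=0$, whose exponents are $\{0,0\}$, $\{0,0\}$, $\{1/3,2/3\}$ at $z=1,-1,\infty$; substituting the degree-six rational map $L=L(p)$ turns it into a Fuchsian equation $\mathcal{E}_L$ in~$p$. The right member $v_{-1/2}(R)$ solves~(\ref{eq:ode}) with $\nu=-1/2$, $\mu=0$ (exponents $\{0,0\}$, $\{0,0\}$, $\{1/2,1/2\}$); substituting the degree-four map $R=R(p)$ and conjugating by the prefactor $A(p)$ yields a Fuchsian equation $\mathcal{E}_R$. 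The task is then to verify $\mathcal{E}_L=\mathcal{E}_R$. This rests on the defining relation~(\ref{eq:c3}) of~$\mathcal{C}_3$, which fixes the $p$-fibers of $L$ and~$R$ over their singular values, and on the exponent $1/24$ in $A(p)$, chosen precisely so that the exponents of $\mathcal{E}_L$ and $\mathcal{E}_R$ coincide at every $p$-singularity. In particular, the triple ramification of $p\mapsto L$ over $L=\infty$ (at $p=\pm\sqrt3$) converts the fractional pair $\{1/3,2/3\}$ into the integer pair $\{1,2\}$, producing apparent singularities; on the right these same points arise as order-one zeros of $A(p)$ lying over regular values of $R$, where I would check that the exponents again come out to $\{1,2\}$. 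Once the two operators share all singular points and exponents, apparent ones included, confirming $\mathcal{E}_L=\mathcal{E}_R$ reduces to a finite rational computation.

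With $\mathcal{E}_L=\mathcal{E}_R$ in hand, both sides lie in its two-dimensional solution space, and it remains to see that they are the \emph{same} solution; for this I would use the base point $p=1$, where $L(p),R(p)\to1$ and $A(1)=1$. For identity~(i), both $P_{-1/3}(L(p))$ and $A(p)\,{\rm P}_{-1/2}(R(p))$ are analytic at $p=1$ with value unity, by~(\ref{eq:asympP}) and~(\ref{eq:asympPFerrers}) with $\mu=0$, hence both coincide with the unique Frobenius solution recessive there, which proves the identity. The companion~$(\overline{i})$ follows by matching the second, logarithmic solution: using the definition~(\ref{eq:widetildeP}) of $\widetilde P_{-1/3}$ together with the Ferrers--Legendre relations~(\ref{eq:ferrers}), one compares the subdominant behavior of the two sides and reads off the constant $\csc(\pi/3)$ and the factor $2/\pi$. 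Identities~(ii) and~$(\overline{ii})$ are the analytic continuations to the range $p\in(0,1)$, where $L$ re-enters $(-1,1)$ and $R$ crosses into $(1,\infty)$, exchanging the Legendre and Ferrers roles; the same asymptotic argument applies, now with $\xbar{\rm P}_{-1/3}$ and $\widehat Q_{-1/2}$ and the definition~(\ref{eq:barP}).

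The main obstacle will be the exponent matching of the second paragraph. The signature-3 curve~(\ref{eq:c3}) is the most intricate of the three, lacking the $R\mapsto-R$ symmetry that streamlines $\mathcal{C}_4$ and $\mathcal{C}_6$, so the inventory of $p$-singularities of $\mathcal{E}_L$ and $\mathcal{E}_R$, and the verification that each critical value of the degree-six and degree-four maps is either a genuine matched singularity or an apparent one removed by $A(p)$, is heaviest here. Because order zero is forced, one cannot carry a free order parameter through the computation as a consistency check, so the delicate point is to confirm directly that no spurious singular point survives and that $\mathcal{E}_L$ and $\mathcal{E}_R$ collapse to the identical operator.
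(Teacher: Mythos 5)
Your proposal is correct and is essentially the paper's own proof: one verifies $\mathcal{E}_L=\mathcal{E}_R$ by explicit computation (agreement of singular points and exponents, apparent/removable ones included, being necessary but not sufficient since the lifted equation has more than three singular points), and then identifies the two sides of each identity as the same Frobenius solution by asymptotic comparison at $p=1$ (for $I_3(i),I_3(ii)$) or $p=0$ (for $I_3(\overline{ii})$), with $I_3(\overline{i})$ handled exactly as you say, by matching the two-term logarithmic (``finer'') asymptotics at $p=1$, which is forced because ${\rm Q}_{-1/2}$ is not a Frobenius solution at any singular point and the order-zero exponents replace the power-law pair by $1,\ln(z-1)$. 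The only slip, immaterial to the argument, is in your closing paragraph: $\mathcal{C}_3$ \emph{is} invariant under $R\mapsto-R$ (performed by $p\mapsto-p$); what it lacks relative to $\mathcal{C}_6$ is the symmetry $L\mapsto-L$.
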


\smallskip
To construct trigonometric versions of these identities, one substitutes
$L=\cosh\xi$ and $L=\cos\theta$ into the relation~(\ref{eq:c3}), and solves
for~$R$ as a function of $\xi$ or~$\theta$.  This yields the following.

\begin{corollary}
  \label{cor:i3}
  The following identities coming from\/ $\mathcal{C}_3$ hold when
  $\xi\in(0,\infty)$ and $\theta\in(0,\pi)$.
  \begin{align*}
    &I_3(i):\quad P_{-1/3}(\cosh\xi) = \sqrt[4]{\frac{3\sinh(\xi/3)\cosh^2(\xi/6)}{\sinh\xi\cosh^2(\xi/2)}}&\\
    &\qquad\qquad\qquad\qquad\qquad\qquad\quad{}\times{\rm P}_{-1/2}\left(\sqrt{\frac{3\sinh(\xi/3)\cosh^2(\xi/6)}{\sinh\xi\cosh^2(\xi/2)}}\:\left[2\cosh(\xi/3)-\cosh(2\xi/3)\right]\right);&\\
    &I_3(\overline{i}):\quad \text{The same, with
      $P_{-1/3},{\rm P}_{-1/2}$ replaced by\/ $\csc(\pi/3)\widetilde
      P_{-1/3},(2/\pi){\rm Q}_{-1/2}$};&\\
    &I_3(ii):\quad {\rm P}_{-1/3}(\cos\theta) = \sqrt[4]{\frac{3\sin(\theta/3)\cos^2(\theta/6)}{\sin\theta\cos^2(\theta/2)}}&\\
    &\qquad\qquad\qquad\qquad\qquad\qquad\quad{}\times{P}_{-1/2}\left(\sqrt{\frac{3\sin(\theta/3)\cos^2(\theta/6)}{\sin\theta\cos^2(\theta/2)}}\:\left[2\cos(\theta/3)-\cos(2\theta/3)\right]\right);&\\
    &I_3(\overline{ii}):\quad \text{The same, with\/
      ${\rm P}_{-1/3},{P}_{-1/2}$ replaced by\/ $\csc(\pi/3)\xbar{\rm P}_{-1/3},(2/\pi)\widehat{Q}_{-1/2}$}.&
  \end{align*}
\end{corollary}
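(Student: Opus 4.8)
The plan is to deduce Corollary~\ref{cor:i3} from the already-established rationally parametrized identities of Theorem~\ref{thm:i3} by a single reparametrization of the curve $\mathcal{C}_3$: I replace the rational parameter $p$ by the trigonometric one through $L=\cosh\xi$ (for $I_3(i),I_3(\overline i)$, where $L>1$) and $L=\cos\theta$ (for $I_3(ii),I_3(\overline{ii})$, where $-1<L<1$). Because each identity in Theorem~\ref{thm:i3} is already proved as an identity between functions on $\mathcal{C}_3$, no new fact about the Legendre or Ferrers functions is needed; the whole task is to express the auxiliary quantities $R=R(p)$ and $A=A(p)$ explicitly in terms of $\xi$ (resp.\ $\theta$), and to check that the trigonometric parameter sweeps out exactly the stated $p$-range. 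The barred identities $I_3(\overline i),I_3(\overline{ii})$ then require no separate work, since Theorem~\ref{thm:i3} pairs them with the same $(L,R,A)$.

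First I would invert $L(p)=\cosh\xi$. Setting $q=p^2$ and $r=q-3$, the parametrization $L=1-54(p^2-1)/(p^2-3)^3$ becomes $(\cosh\xi-1)\,r^3+54\,r+108=0$; dividing by $\cosh\xi-1=2\sinh^2(\xi/2)$ yields the depressed cubic $r^3+\tfrac{27}{\sinh^2(\xi/2)}\,r+\tfrac{54}{\sinh^2(\xi/2)}=0$. This cubic has the distinctive feature that its Cardano auxiliary quantity is exactly a hyperbolic trisection: with $a=27/\sinh^2(\xi/2)$ and $b=54/\sinh^2(\xi/2)$ one has $\tfrac{3b}{2a}\sqrt{3/a}=\sinh(\xi/2)$, so the relevant real root is $r=-2\sqrt{a/3}\,\sinh\!\big(\tfrac13\operatorname{arcsinh}\sinh(\xi/2)\big)=-6\sinh(\xi/6)/\sinh(\xi/2)$. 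The triple-angle identity $\sinh(\xi/2)=\sinh(\xi/6)\,[3+4\sinh^2(\xi/6)]$ then collapses this to the closed form $p^2=3\,(4\cosh^2(\xi/6)-3)/(4\cosh^2(\xi/6)-1)$. I would record that this map sends $\xi\in(0,\infty)$ monotonically onto $p\in(1,\sqrt3)$, matching row~(i) of the Theorem.

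Next I would substitute this $p(\xi)$ into $A(p)=\sqrt{(3-p^2)^2/4p^3}$ and into the rational expression for $R(p)$, simplifying with the triple- and third-angle relations linking $\xi,\xi/2,\xi/3,\xi/6$. Writing $c=\cosh(\xi/6)$, the prefactor becomes $A^4=3/[(4c^2-1)(4c^2-3)^3]$, which is precisely the radicand $Z$ appearing under the fourth root in $I_3(i)$; and since the stated right-hand argument is $\sqrt Z\,[2\cosh(\xi/3)-\cosh(2\xi/3)]=A^2\,[2\cosh(\xi/3)-\cosh(2\xi/3)]$, the computation of $R$ reduces to checking $R=A^2\,[2\cosh(\xi/3)-\cosh(2\xi/3)]$, with the branch of the square root and the sign of $R$ fixed by the $R$-range $1>R>-\sqrt3/2$. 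Using $4c^2-3=2\cosh(\xi/3)-1$ this reduces further to the elementary identity $\tfrac12\big(3-(2\cosh(\xi/3)-1)^2\big)=2\cosh(\xi/3)-\cosh(2\xi/3)$. The Ferrers case $L=\cos\theta$ I would treat identically, except that $\cos\theta-1=-2\sin^2(\theta/2)<0$ puts the cubic in the \emph{casus irreducibilis}, so the trisection is the ordinary (cosine) one; this produces $p^2=3\,(4\cos^2(\theta/6)-3)/(4\cos^2(\theta/6)-1)$, maps $\theta\in(0,\pi)$ onto $p\in(0,1)$ as in row~(ii), and yields the $\sin/\cos$ forms of $I_3(ii)$.

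The main obstacle is precisely this inversion step and the attendant branch bookkeeping: one must recognize the hyperbolic (resp.\ trigonometric) trisection concealed in the cubic for $p^2$, and then carry the half-integer powers of $4\cosh^2(\xi/6)-1$ and $4\cosh^2(\xi/6)-3$ through the simplification of $A$ and $R$ while choosing signs so that $A\to1$ and $(L,R)\to(1,1)$ as $\xi\to0$, and so that $R$ remains in its prescribed interval throughout. Everything else is routine manipulation of triple-angle identities.
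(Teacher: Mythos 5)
Your proposal is correct and takes essentially the same approach as the paper: Corollary~\ref{cor:i3} is obtained there from Theorem~\ref{thm:i3} precisely by substituting $L=\cosh\xi$ (resp.\ $L=\cos\theta$) and re-expressing the right-hand data trigonometrically, the only cosmetic difference being that you invert the parametrization $L(p)$ for $p$ (a cubic in $p^2-3$, solved by hyperbolic/circular trisection) and then evaluate $R(p)$, $A(p)$, whereas the paper solves the curve relation~(\ref{eq:c3}) for $R$ directly. The trisection formulas, the identification $A^4$ with the stated radicand, the reduction of $R$ to the identity $\tfrac12\bigl(3-(2\cosh(\xi/3)-1)^2\bigr)=2\cosh(\xi/3)-\cosh(2\xi/3)$, and the range/branch checks you supply are exactly the unwritten computational content of the paper's one-sentence derivation.
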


\begin{definition*}
  The algebraic $L$--$R$ curve $\mathcal{C}_3'$ is the curve
  \begin{equation}
    \label{eq:c3p}
    27(4L-5)^3(R^2-1)^2 - 4 (L-1)(L+1)^3 (R^2+3)^3 = 0,
  \end{equation}
  which is rationally parametrized by
  \begin{displaymath}
    L=1-54\,\frac{(p^2-1)^2}{(p^2+3)^3} = -1 +
    2\,\frac{p^2(p^2-9)^2}{(p^2+3)^3},\qquad R= \pm1\mp \frac{(p\mp 1)(3\pm p)^3}{8p^3}.
  \end{displaymath}
  It is invariant under $L\mapsto-L$ and $R\mapsto-R$, which are performed
  by $p\mapsto3/p$ and $p\mapsto-p$.  An associated prefactor function
  $A=A(p)$, with limit unity when $p\to1$ and $(L,R)\to(1,1)$, is
  \begin{displaymath}
    A(p) = \left[\frac{3^6}{2^{16}}\,\frac{(R-1)^4(R+1)^4}{(L-1)^2(L+1)^6}\right]^{1/24} \!=\: \sqrt{\frac{(3+p^2)^2}{16p^3}}.
  \end{displaymath}
\end{definition*}

\begin{theorem}
  \label{thm:i3p}
  For each pair $u,v$ of Legendre or Ferrers functions listed below, an
  identity
  \begin{displaymath}
    u_{-1/3}(L(p)) = A(p)\, v_{-1/2}(R(p))
  \end{displaymath}
  of type\/ $I'_3$, coming from the curve\/ $\mathcal{C}'_3$, holds for the
  specified range of values of the parameter\/ $p$.

\medskip\medskip
\begin{tabular}{llllll}
\hline
Label &$u_{-1/3}$ &$v_{-1/2}$ &$p$ range & $L$ range & $R$ range\\
\hline
\hline
(i) & ${\rm P}_{-1/3}$ & ${\rm P}_{-1/2}$ & $(1,3)$ & $1>L>-1$ & $1>R>-1$ \\
$(\overline{i})$ & $\frac12\csc(\pi/3)\xbar{\rm P}_{-1/3}$ & $\xbar{\rm P}_{-1/2}$ &  &  &  \\
\hline
(ii) & ${\rm P}_{-1/3}$ & ${P}_{-1/2}$ & $(0,1)$ & $-1<L<1$ & $\infty>R>1$ \\
$(\overline{ii})$ & $\frac12\csc(\pi/3)\xbar{\rm P}_{-1/3}$ & $(2/\pi)\widehat{Q}_{-1/2}$ &  & & \\
\hline
\end{tabular}
\end{theorem}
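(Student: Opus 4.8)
The plan is to prove Theorem~\ref{thm:i3p} by the algebraic-curve method used in Section~\ref{sec:suppl} for Whipple's relation and carried out for the remaining signatures in Section~\ref{sec:derivation}; the argument runs parallel to that for the companion Theorem~\ref{thm:i3} and the signature-six Theorem~\ref{thm:i6p}. I regard both sides of the asserted identity as functions of the parameter~$p$. The left side $u_{-1/3}(L(p))$ satisfies the second-order equation $\mathcal{E}_L$ obtained by pulling back Legendre's equation~(\ref{eq:ode}), with $\nu=-\tfrac13$ and $\mu=0$, under the rational substitution $z=L(p)$. The right side $A(p)\,v_{-1/2}(R(p))$ satisfies the equation $\mathcal{E}_R$ obtained by pulling back~(\ref{eq:ode}), with $\nu=-\tfrac12$ and $\mu=0$, under $z=R(p)$ and then conjugating the resulting operator by the scalar prefactor $A(p)=\sqrt{(3+p^2)^2/(16p^3)}$. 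The heart of the proof is to show that these two operators coincide, $\mathcal{E}_L=\mathcal{E}_R$; granting this, the left and right sides lie in a common two-dimensional solution space, and only a normalization remains to be fixed.

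To establish $\mathcal{E}_L=\mathcal{E}_R$, note first that the defining relation~(\ref{eq:c3p}) makes $L(p)$ and $R(p)$ algebraically dependent, so the two pullbacks are Fuchsian with the same singular locus on the $p$-sphere. I would then compare their Riemann schemes. The singular points of~(\ref{eq:ode}) sit at $z=1,-1,\infty$, with exponent pairs $\{0,0\}$, $\{0,0\}$ and $\{-\nu,\nu+1\}$; I locate the $p$-values lying over $L\in\{1,-1,\infty\}$ and over $R\in\{1,-1,\infty\}$, transport these exponents through the ramification of the degree-six maps $L(p),R(p)$, incorporate the exponent shift produced by the prefactor $A(p)$, and verify that the resulting local data agree at every point and that each remaining (apparent) singularity is non-logarithmic with integer exponent difference. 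The prefactor is chosen precisely so as to reconcile the third-integer exponents $\{1/3,2/3\}$ coming from $\nu=-\tfrac13$ at $L=\infty$ with the half-integer exponents $\{1/2,1/2\}$ coming from $\nu=-\tfrac12$ at $R=\infty$, and to render the two schemes identical.

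With $\mathcal{E}_L=\mathcal{E}_R$ in hand, I fix the solution by working at the base point $p=1$, where $L=R=1$ and $A(1)=1$. Since the order is zero, the asymptotics~(\ref{eq:asympP}) and~(\ref{eq:asympPFerrers}) give $P_\nu(z),{\rm P}_\nu(z)\to1$ as $z\to1$; matching these unit leading coefficients on the two sides establishes the unbarred identities $I_3'(i)$ and $I_3'(ii)$ (in case (ii) the left Ferrers function and the right Legendre function are approached from opposite sides of $z=1$, but both limits are unity). The companion identities $I_3'(\overline i),I_3'(\overline{ii})$ express the equality of a second, linearly independent solution on each side. I would obtain them by applying the reflection symmetry of~$\mathcal{C}_3'$ that realizes $L\mapsto-L$, the one induced by $p\mapsto3/p$, which sends ${\rm P}_{-1/3}(L)$ to ${\rm P}_{-1/3}(-L)=\xbar{\rm P}_{-1/3}(L)$ and transforms the right side accordingly; the definitions~(\ref{eq:barP}),(\ref{eq:widetildeP}) and the Ferrers--Legendre relations~(\ref{eq:ferrers}) then supply the normalizing factors $\tfrac12\csc(\pi/3)$ and $2/\pi$ and convert the reflected right-hand solution into $\xbar{\rm P}_{-1/2}$ (respectively $(2/\pi)\widehat Q_{-1/2}$). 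Finally, the passage from the stated real $p$-interval to the maximal connected complex domain is routine analytic continuation, the sole constraint being that neither $L$ nor $R$ meet a cut.

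I expect the Riemann-scheme matching of the second paragraph to be the genuine obstacle. Because $L(p)$ and $R(p)$ are degree-six maps and~(\ref{eq:c3p}) is of high degree, the two pullbacks carry many singular points, several of them apparent, and one must verify both that the true exponents align and that no spurious logarithmic term is introduced at a critical point of either cover. The labour is substantially reduced, however, by the structural coincidences that $\mathcal{C}_3'$ shares its map $L(p)$ with $\mathcal{C}_6'$ and its map $R(p)$ with $\mathcal{C}_3$; the exponent bookkeeping at the ramification points can therefore be imported from the already-proved Theorems~\ref{thm:i6p} and~\ref{thm:i3}, which is the precise sense in which the signature-three case closely resembles the signature-six case.
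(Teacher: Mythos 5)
Your overall framework is the paper's (lift both sides to the $p$-sphere, show $\mathcal{E}_L=\mathcal{E}_R$, then match Frobenius solutions), and your normalization of the unbarred identities at $p=1$ is exactly what the paper does. But there are two genuine gaps. First, your method for establishing $\mathcal{E}_L=\mathcal{E}_R$ --- matching Riemann schemes and checking that the apparent singularities are non-logarithmic --- cannot work, even in principle. The paper is explicit about this: a second-order Fuchsian equation with $m>3$ singular points is determined by its exponents only up to $m-3$ accessory parameters, so equality of P-symbols (plus non-logarithmic conditions at the apparent points) does not imply equality of the two lifted operators. For $\mathcal{C}_3'$ the lifted equation has the six singular points $p=-3,-1,0,1,3,\infty$ (each with exponent difference zero) plus removable points at $p=\pm\sqrt{3}\,{\rm i}$, so several accessory parameters remain free after all your local checks. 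The paper closes this gap by computing both pullbacks explicitly and exhibiting them as the single equation displayed in \S\,\ref{sec:derivation}; the term-by-term comparison is the actual proof, and you have omitted it. Your suggestion of importing the bookkeeping from Theorems~\ref{thm:i6p} and~\ref{thm:i3} does not repair this: although $\mathcal{C}_3'$ does share its $L$-map with $\mathcal{C}_6'$ and its $R$-map with $\mathcal{C}_3$, those theorems pull back \emph{different} Legendre equations (degree $-1/6$ with free order $-\alpha$, resp.\ degree $-1/2$ conjugated by a \emph{different} prefactor $A$), so neither $\mathcal{E}_L$ nor $\mathcal{E}_R$ can be taken over unchanged, and in any case local data is not the issue.

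Second, your derivation of $I_3'(\overline{i}),I_3'(\overline{ii})$ from $I_3'(i)$ via the symmetry $p\mapsto 3/p$ fails, because on $\mathcal{C}_3'$ that substitution performs $L\mapsto -L$ but does \emph{not} perform $R\mapsto -R$: it moves $R$ to a different point of the curve. Concretely, $R(2)=-61/64$ while $R(3/2)=-11/16\neq -R(2)$. Hence applying $p\mapsto3/p$ to $I_3'(i)$ yields $\xbar{\rm P}_{-1/3}(L(p)) = A(3/p)\,{\rm P}_{-1/2}(R(3/p))$, and converting the right side into $\xbar{\rm P}_{-1/2}(R(p))$, let alone into $(2/\pi)\widehat{Q}_{-1/2}(R(p))$, is not a matter of the definitional relations (\ref{eq:barP}),(\ref{eq:ferrers}): it would require precisely the kind of nontrivial argument-transformation the theorem asserts, so your route is circular. (For $(\overline{ii})$ it is not even meaningful, since ${\rm P}_{-1/2}$ at $-R$ with $R>1$ lies off the Ferrers domain.) The paper instead obtains each barred identity independently, by matching Frobenius solutions at the \emph{opposite} end of the relevant $p$-interval: for $I_3'(\overline{i})$ at $p=3$, where $L,R\to-1$ and both sides reduce to the defining behavior of $\xbar{\rm P}$, the constant $\tfrac12\csc(\pi/3)=1/\sqrt3$ being exactly $A(3)$; for $I_3'(\overline{ii})$ at $p=0$, where $L\to-1$ and $R\to+\infty$, the factor $2/\pi$ emerging from the asymptotics (\ref{eq:asympQhat}) of $\widehat{Q}_{-1/2}$ against $A(p)\sim 3/(4p^{3/2})$. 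No appeal to a curve symmetry is made, and none is available for these two identities.
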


\smallskip
\begin{remark*}
  Identities $I_3'(i),I_3'(\overline{i})$ were found by Ramanujan; see
  \cite[Chap.~33, Theorem~5.6 and Corollary~5.7]{BerndtV} and
  \cite[Lemma~2.1]{Zhou2014}, and compare
  \cite[Proposition~5.7(b)]{Borwein87}.  He used $(p-\nobreak1)/2\in(0,1)$
  as parameter.
\end{remark*}

To construct trigonometric versions of these identities, one substitutes
$L=\cos\theta$ into the relation~(\ref{eq:c3p}), and solves for~$R$ as a
function of~$\theta$.  This yields the following.

\begin{corollary}
  \label{cor:i3p}
  The following identities coming from\/ $\mathcal{C}'_3$ hold when
  $\theta\in(0,\pi)$.
  \begin{align*}
    &I'_3(i):\quad {\rm P}_{-1/3}(\cos\theta) =
    \sqrt{\frac{\cos(\pi/6)}{2\cos(\pi/6-\theta/3) - \cos(\pi/6+2\theta/3)}}&\\
&\qquad\qquad\qquad\qquad\qquad\qquad\quad{}\times{\rm P}_{-1/2}\left(1-2\:\frac{2\sin(\theta/3) + \sin(2\theta/3)}{2\cos(\pi/6-\theta/3)- \cos(\pi/6+2\theta/3)}
\right);&\\
    &I'_3(\overline{i}):\quad \text{The same, with\/
      ${\rm P}_{-1/3},{\rm P}_{-1/2}$ replaced
  by $\tfrac12\csc(\pi/3)\xbar{\rm P}_{-1/3},\xbar{\rm P}_{-1/2}$};&\\
    &I'_3(ii):\quad {\rm P}_{-1/3}(\cos\theta) =
\sqrt{\frac{\cos(\pi/6)}{2\cos(\pi/6+\theta/3) - \cos(\pi/6-2\theta/3)}}&\\
&\qquad\qquad\qquad\qquad\qquad\qquad\quad{}\times{P}_{-1/2}\left(1+2\:\frac{2\sin(\theta/3) + \sin(2\theta/3)}{2\cos(\pi/6+\theta/3)- \cos(\pi/6-2\theta/3)}
\right);&\\
    &I'_3(\overline{ii}):\quad \text{The same, with\/
      ${\rm P}_{-1/3},{P}_{-1/2}$ replaced
  by $\tfrac12\csc(\pi/3)\xbar{\rm P}_{-1/3},(2/\pi)\widehat{Q}_{-1/2}$}.&
  \end{align*}
\end{corollary}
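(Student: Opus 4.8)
\emph{Proof proposal.} The plan is to obtain Corollary~\ref{cor:i3p} directly from Theorem~\ref{thm:i3p} by reparametrizing the curve $\mathcal{C}'_3$ so that $L = \cos\theta$. The theorem already establishes each identity, as a pointwise relation in the rational parameter $p$, along $\mathcal{C}'_3$; so it suffices to exhibit an explicit change of parameter $\theta\mapsto p$ realizing $L(p)=\cos\theta$ and then to simplify $R(p)$ and $A(p)$ into the stated trigonometric forms. Since $\mathcal{C}'_3$ and $\mathcal{C}'_6$ share the $L$-parametrization $L = 1 - 54(p^2-1)^2/(p^2+3)^3$, the required parameter change is the same one underlying Corollary~\ref{cor:i6p}.

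First I would set $p=\sqrt3\,\tan\psi$ and collapse the $L$-parametrization to a single cosine. Using $p^2+3 = 3\sec^2\psi$ and the triple-angle identity $4\cos^2\psi-3=\cos 3\psi/\cos\psi$, one finds $3\tan^2\psi-1 = -\cos 3\psi/\cos^3\psi$, whence $54(p^2-1)^2/(p^2+3)^3 = 2\cos^2 3\psi$ and therefore $L = 1 - 2\cos^2 3\psi = -\cos 6\psi$. Taking $6\psi=\pi+\theta$ (for part~(i)) or $6\psi=\pi-\theta$ (for part~(ii)) then gives $L=\cos\theta$; and as $\theta$ traverses $(0,\pi)$, the corresponding $p=\sqrt3\,\tan((\pi\pm\theta)/6)$ traverses $(1,3)$ and $(0,1)$, exactly the $p$-ranges recorded in Theorem~\ref{thm:i3p}.

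Next I would push this substitution through $R$ and $A$. For the upper-sign branch $R = 1 - (p-1)(p+3)^3/(8p^3)$, the factorizations $p-1 = 2\sin(\psi-\pi/6)/\cos\psi$ and $p+3 = 2\sqrt3\,\sin(\psi+\pi/3)/\cos\psi$, combined with $\sin(\psi+\pi/3)=\cos(\psi-\pi/6)$, reduce $1-R$ to $2\sin\chi\cos^3\chi/(\sin^3\psi\cos\psi)$ with $\chi=\psi-\pi/6$. Converting $\psi$ back to $\theta$ and using $8\sin\chi\cos^3\chi = 2\sin 2\chi + \sin 4\chi$ reproduces the numerator $2\sin(\theta/3)+\sin(2\theta/3)$, while $2\cos(\pi/6-\theta/3)-\cos(\pi/6+2\theta/3)$ simplifies to $2\sin 2\psi - \sin 4\psi = 8\sin^3\psi\cos\psi$, matching the stated denominator. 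For the prefactor one checks $A^2 = (3+p^2)^2/(16p^3) = \sqrt3/(16\sin^3\psi\cos\psi)$, which is $\cos(\pi/6)$ over that same denominator, so $A$ equals the square root written in the corollary.

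The barred identities $I'_3(\overline i),I'_3(\overline{ii})$ need no extra argument: they are the same relation along $\mathcal{C}'_3$ with $u,v$ replaced by companion solutions, so the argument $R(\theta)$ and prefactor $A(\theta)$ are untouched. I expect the one genuinely delicate step to be the trigonometric bookkeeping of the third paragraph---confirming that the asymmetric numerator/denominator and the radicand of $A$ are \emph{exactly} what the factored rational expressions produce, and that the two branches $6\psi=\pi\pm\theta$ line up with parts~(i) and~(ii) and with the $\pm/\mp$ sign conventions in the parametrization of $R$. This is reconciliation rather than discovery, since Theorem~\ref{thm:i3p} has already supplied the analytic content.
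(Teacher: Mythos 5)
Your proposal is correct and is essentially the paper's own derivation: the paper obtains Corollary~\ref{cor:i3p} from Theorem~\ref{thm:i3p} precisely by substituting $L=\cos\theta$ into the curve relation~(\ref{eq:c3p}) and solving for $R$ (and $A$) as trigonometric functions of $\theta$, which is what your explicit substitution $p=\sqrt3\tan\psi$, $6\psi=\pi\pm\theta$ carries out, with the two sign choices matching parts (i) and (ii) and the barred identities following unchanged since only the function pairs $u,v$ differ. Your trigonometric reductions (e.g.\ $8\sin\chi\cos^3\chi=2\sin2\chi+\sin4\chi$ and $2\sin2\psi-\sin4\psi=8\sin^3\psi\cos\psi$) check out, so the reconciliation you flag as delicate is in fact complete.
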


\section{Additional results}
\label{sec:suppl}

This section presents three additional transformation theorems for Legendre
functions, based on rational or algebraic transformations of the
independent variable, and introduces the fundamental proof technique.
Theorem~4.1 relates the four functions ${\rm P}_{\alpha-1/2}^{2\alpha},
\xbar{\rm P}_{\alpha-1/2}^{2\alpha},\allowbreak {P}_{\alpha-1/2}^{2\alpha},
\widehat{Q}_{\alpha-1/2}^{2\alpha}$, and Theorem~4.2 (equivalent to
Whipple's transformation formula) relates
${P}_{\alpha-1/2}^{-\beta},\widehat{Q}_{\beta-1/2}^{-\alpha}$.
Theorem~4.1, or an equivalent, has appeared in the setting of `generalized'
(associated) Legendre functions; compare \cite{Meulenbeld60} and
\cite[\S\,4]{Virchenko2001}.  It is not well known.  Whipple's formula is
more familiar, in~part because it has two free parameters rather than one,
but the proof indicated below is new.  Theorem~4.3 relates the functions
${P}_{2\alpha-1/2}^{\alpha},\widehat{Q}_{2\alpha-1/2}^{\alpha}$, and is
unexpected.

The calculations in the proofs employ the calculus of Riemann P-symbols,
which is classical~\cite{Poole36}.  For any homogeneous second-order
ordinary differential equation $\mathcal{L}u=0$ on the Riemann sphere
$\mathbb{P}^1\defeq\mathbb{C}\cup\{\infty\}$ which is Fuchsian, i.e., has
only regular singular points, the P-symbol tabulates the singular points
and the two characteristic exponents associated to each point.  For
example, Legendre's equation~(\ref{eq:ode}) has P-symbol
\begin{equation}
  \left\{\!
  \begin{array}{ccc|c}
    1 & -1 & \infty & z \\
    \hline
    -\mu/2 & -\mu/2 & -\nu & \\
    \mu/2 & \mu/2 & \nu+1
  \end{array}
  \!\right\},
\end{equation}
in which the order of the points and that of the exponents are not
significant.  As mentioned, ${\rm P}^{-\mu}_\nu,\xbar{\rm
  P}^{\,-\mu}_\nu,\allowbreak P^{-\mu}_\nu$ and $\widehat Q^{\mu}_\nu$ are
Frobenius solutions associated respectively with the exponent $\mu/2$
at~$z=1$, the exponent $\mu/2$ at~$z=-1$, the exponent $\mu/2$ at~$z=1$ and
the exponent $\nu+1$ at~$z=\infty$.

Changes of variable applied to an equation $\mathcal{L}u=0$ affect its
P-symbol in predictable ways.  For instance, if $w(z)=(z-\nobreak z_0)^c
u(z)$ is a linear change of the dependent variable, the transformed
equation $\widetilde {\mathcal{L}} w=0$, i.e.,
$\mathcal{L}\left[(z-z_0)^{-c}w\right]=0$, will have its exponents at~$z=z_0$
shifted upward by~$c$ relative to those of $\mathcal{L}u=0$, and those at
$z=\infty$ similarly shifted downward.  In interpreting this statement one
should note that any ordinary, i.e.\ non-singular point has
exponents~$0,1$.

Any rational map $f\colon {\mathbb{P}}^1_{\tilde z} \to
{\mathbb{P}}^1_{z}$, i.e., rational change of the independent variable of
the form $z=f(\tilde z)$, will lift a Fuchsian differential equation
$\mathcal{L}u=0$ on~$\mathbb{P}^1_z$ to a Fuchsian equation $\widetilde
{\mathcal{L}}\tilde u=0$ on~$\mathbb{P}^1_{\tilde z}$, the P-symbol of
which can be calculated.  The simplest case is when $f$~is a homography
(also called a linear fractional or M\"obius transformation), so that
$z=f(\tilde z)=(A\tilde z+\nobreak B)/\allowbreak (C\tilde z+\nobreak D)$
with $AD-\nobreak BC\neq0$.  In this case $f$~provides a one-to-one
correspondence between the points of the $z$-sphere and those of the
$\tilde z$-sphere, and exponents are unaffected by lifting: $\tilde
z_0\in\mathbb{P}^1_{\tilde z}$ is a singular point of
$\widetilde{\mathcal{L}}\tilde u=0$ if and only if $f(\tilde
z_0)\in\mathbb{P}_z^1$ is one of $\mathcal{L}u=0$; and the exponents are
the same.

More generally, if $f$~is a rational function with $z-z_0\sim{\rm
  const}\times\allowbreak (\tilde z-\nobreak\tilde z_0)^k$, so that $f^{-1}(z_0)$
equals~$\tilde z_0$ with multiplicity~$k$, the exponents at the lifted
point $\tilde z=\tilde z_0$ will be $k$~times those at~$z=z_0$.  (This
assumes that the points $z_0,\tilde z_0$ are finite; if $z_0,\tilde z_0$
are~$\infty$ then $z-\nobreak z_0,\allowbreak \tilde z-\nobreak \tilde z_0$
must be replaced by $1/z,1/\tilde z$.)  Thus by an appropriate lifting, and
if necessary an additional shifting of exponents (performed by an
appropriate linear change of the dependent variable), it may be possible to
convert the exponents of a singular point to~$0,1$.  That is, under
rational lifting a singular point of a differential equation may
`disappear': become an ordinary point, in a neighborhood of which each
Frobenius solution is analytic.

The calculus of P-symbols is a powerful tool for exploring the effect of
changes of variable on Fuchsian differential equations, but a P-symbol does
not, in general, uniquely determine such an equation, or even its solution
space.  If a second-order equation has $m$~specified singular points on the
Riemann sphere ($m\ge3$), it~and its two-dimensional solution space are
determined by the $2m$~exponents and by $m-\nobreak3$ \emph{accessory
  parameters}\cite{Poole36}.  To prove equality between two second-order
differential equations with more than three singular points, which have the
same singular point locations and characteristic exponents but which have
been obtained by different liftings, one must work~out the lifted equations
explicitly, and compare them term-by-term.

\subsection{Homographic identities}

Theorem~\ref{thm:m} below is based on an algebraic change of variable, from
$L$ to~$R$, which is relatively simple: it is a homography of the Riemann
sphere.  The associated curve is denoted~$\mathcal{M}$, after M\"obius, and
the resulting identities are said to be of type~$M$.

\begin{definition*}
  The algebraic $L$--$R$ curve $\mathcal{M}$ is the curve
  \begin{equation}
    \label{eq:m}
    (L+1)(R+1)-4=0,
  \end{equation}
  which is rationally parametrized by
  \begin{displaymath}
    L=-1+2p, \qquad R=-1 + 2/p,
  \end{displaymath}
  and is invariant under $L\leftrightarrow R$, which is performed by
  $p\mapsto 1/p$.  An associated prefactor function $A=A(p)$, equal to
  unity when $p=1$ and $(L,R)=(1,1)$, is
  \begin{displaymath}
    A(p) = \left[\frac{2}{L+1}\right]^{1/2} = \left[\frac{R+1}{2}\right]^{1/2} = \sqrt{\frac1p}.
  \end{displaymath}
\end{definition*}

\begin{theorem}
  \label{thm:m}
  For each pair $u,v$ of Legendre or Ferrers functions listed below, an
  identity
  \begin{displaymath}
    u_{\alpha-1/2}^{-2\alpha}(L(p)) = A(p)\, v_{\alpha-1/2}^{-2\alpha}(R(p))
  \end{displaymath}
  of type\/ $M$, coming from the curve\/ $\mathcal{M}$, holds for the
  specified range of values of the parameter\/ $p$.

\medskip\medskip
\begin{tabular}{llllll}
\hline
Label &$u_{\alpha-1/2}^{-2\alpha}$ &$v_{\alpha-1/2}^{-2\alpha}$ &$p$ range & $L$ range & $R$ range\\
\hline
\hline
(i) & $P_{\alpha-1/2}^{-2\alpha}$ & ${\rm P}_{\alpha-1/2}^{-2\alpha}$ & $(1,\infty)$ & $1<L<\infty$ & $1>R>-1$ \\
$(\overline{i})$ & $(2/\pi)\cos(\alpha\pi)\widehat{Q}_{\alpha-1/2}^{-2\alpha}$ & $\xbar{\rm P}_{\alpha-1/2}^{-2\alpha}$ &  & & \\
\hline
(ii) & ${\rm P}_{\alpha-1/2}^{-2\alpha}$ & ${P}_{\alpha-1/2}^{-2\alpha}$ & $(0,1)$ & $-1<L<1$ & $\infty>R>1$ \\
$(\overline{ii})$ & $\xbar{\rm P}_{\alpha-1/2}^{-2\alpha}$ & $(2/\pi)\cos(\alpha\pi)\widehat{Q}_{\alpha-1/2}^{-2\alpha}$ &  & & \\
\hline
\end{tabular}
\end{theorem}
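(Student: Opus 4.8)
The plan is to follow the P-symbol method of \S\ref{sec:suppl}, exploiting the fact that $\mathcal{M}$ is a \emph{homography}. First I would record the P-symbol of Legendre's equation (\ref{eq:ode}) with degree $\nu=\alpha-\tfrac12$ and order $\mu=-2\alpha$: its exponents are $\{\alpha,-\alpha\}$ at each of $z=1$ and $z=-1$, and $\{\tfrac12-\alpha,\tfrac12+\alpha\}$ at $z=\infty$. Under the linear substitution $L=-1+2p$ the singular points $L=1,-1,\infty$ are carried to $p=1,0,\infty$; since a homography preserves exponents, the left-hand equation $\mathcal{E}_L$ in $p$ carries $\{\alpha,-\alpha\}$ to $p=1$ and $p=0$, and $\{\tfrac12-\alpha,\tfrac12+\alpha\}$ to $p=\infty$. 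Under the second homography $R=-1+2/p$ the points $R=1,-1,\infty$ go to $p=1,\infty,0$, so the pulled-back right-hand equation carries $\{\alpha,-\alpha\}$ to $p=1,\infty$ and $\{\tfrac12-\alpha,\tfrac12+\alpha\}$ to $p=0$. Multiplying by the prefactor $A(p)=p^{-1/2}$ shifts the exponents at $p=0$ downward by $\tfrac12$ and those at $p=\infty$ upward by $\tfrac12$, turning the right-hand P-symbol into exactly that of $\mathcal{E}_L$.

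Because both lifted equations are Fuchsian with only the \emph{three} singular points $p=0,1,\infty$, there are $m-3=0$ accessory parameters, so a P-symbol determines the equation uniquely. Hence the coincidence of P-symbols established above immediately gives $\mathcal{E}_L=\mathcal{E}_R$, with no term-by-term comparison required; this is the structural simplification that distinguishes the type-$M$ identities from the higher-degree curves $\mathcal{C}_r,\mathcal{C}'_r$ of \S\ref{sec:mainresults}. It then remains only to verify, for each of the four rows, that the two named functions are the \emph{same} solution of this common equation, which I would do by matching leading asymptotics at one singular point and extending to all admissible $\alpha$ by continuity. For rows (i) and (ii) the natural matching point is $p=1$, where $(L,R)\to(1,1)$ and $A(1)=1$: using (\ref{eq:asympP}) and (\ref{eq:asympPFerrers}) with $\mu=2\alpha$, and the fact that $L-1=2(p-1)$ while $1-R=2(p-1)/p$, both sides behave like $(p-1)^{\alpha}/\Gamma(2\alpha+1)$ to leading order as $p\to1$, so the identity follows with no residual constant.

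For the barred rows the matching is done at $p=\infty$ (equivalently $p=0$), where $L\to\infty$ while $R\to-1$. Here I would compare the recessive behavior of $\widehat Q_{\alpha-1/2}^{-2\alpha}(L)$ at $L=\infty$, governed by (\ref{eq:asympQhat}), against that of $\xbar{\rm P}_{\alpha-1/2}^{-2\alpha}(R)={\rm P}_{\alpha-1/2}^{-2\alpha}(-R)$ at $R=-1$, governed by (\ref{eq:asympPFerrers}). The two powers of $p$ agree automatically, and after inserting the duplication formula for $\Gamma(2\alpha+1)$ the residual constant reduces to $\cos(\alpha\pi)\,\Gamma(\tfrac12-\alpha)\,\Gamma(\tfrac12+\alpha)=\pi$, which is the reflection formula; this is precisely where the left-hand factor $(2/\pi)\cos(\alpha\pi)$ originates. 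I expect this normalization bookkeeping---correctly identifying which Frobenius solution each of $P,{\rm P},\xbar{\rm P},\widehat Q$ represents at the chosen singular point after the two substitutions, and reconciling the gamma- and trigonometric-factor constants---to be the only real obstacle, since the equality of the two differential equations is automatic here. Rows (ii) and $(\overline{ii})$ then follow from (i) and $(\overline{i})$ by the symmetry $p\mapsto1/p$, which interchanges $L$ and $R$ and hence the Legendre and Ferrers roles, together with the same asymptotic matching read off at the image singular point.
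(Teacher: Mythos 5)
Your proposal is correct and follows essentially the same route as the paper's proof: lift Legendre's equation along the two homographies, observe that the lifted P-symbols coincide and that with only three singular points (hence no accessory parameters) this already forces $\mathcal{E}_L=\mathcal{E}_R$, identify each listed pair as the same Frobenius solution at the corresponding singular point, and fix the proportionality constant by asymptotic matching using the duplication and reflection formulas. The only cosmetic difference is that you obtain rows (ii), $(\overline{ii})$ from (i), $(\overline{i})$ via the symmetry $p\mapsto 1/p$, whereas the paper works out $M(\overline{i})$ in detail and treats the remaining rows as similar---an equivalence the paper itself records in the remark following its corollary.
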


\smallskip
To construct trigonometric versions of these identities, one substitutes
$L=\cosh\xi$ and $L=\cos\theta$ into the relation~(\ref{eq:m}), and solves
for~$R$ as a function of $\xi$ or~$\theta$.  This yields the following.

\begin{corollary}
  The following identities coming from\/ $\mathcal{M}$ hold for
  $\alpha\in\mathbb{C}$, when $\xi\in(0,\infty)$ and $\theta\in(0,\pi)$.
  \begin{align*}
    &M(i):\quad P_{\alpha-1/2}^{-2\alpha}(\cosh\xi) = {\mathrm{sech}}(\xi/2)\:
{\rm P}_{\alpha-1/2}^{-2\alpha}\left(
1-2\tanh^2(\xi/2)
\right);&\\
    &M(\overline{i}):\quad \text{The same, with
      $P_{\alpha-1/2}^{-2\alpha},{\rm P}_{\alpha-1/2}^{-2\alpha}$ replaced by\/ 
  $(2/\pi)\cos(\alpha\pi)\widehat{Q}_{\alpha-1/2}^{-2\alpha},\xbar{\rm P}_{\alpha-1/2}^{-2\alpha}$};&\\
    &M(ii):\quad {\rm P}_{\alpha-1/2}^{-2\alpha}(\cos\theta) = \sec(\theta/2)\:
{P}_{\alpha-1/2}^{-2\alpha}\left(
1+2\tan^2(\theta/2)
\right);&\\
    &M(\overline{ii}):\quad \text{The same, with\/
      ${\rm P}_{\alpha-1/2}^{-2\alpha},{P}_{\alpha-1/2}^{-2\alpha}$ replaced by\/ $\xbar{\rm P}_{\alpha-1/2}^{-2\alpha},(2/\pi)\cos(\alpha\pi)\widehat{Q}_{\alpha-1/2}^{-2\alpha}$}.&
  \end{align*}
\end{corollary}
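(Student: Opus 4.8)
The plan is to exploit the fact that $\mathcal{M}$ is a \emph{homography}, so that the change of variable $L\leftrightarrow R$ preserves both the number of singular points and, being one-to-one on the sphere, the characteristic exponents. First I would record the relevant P-symbol. With $\nu=\alpha-\tfrac12$ and order $\mu=2\alpha$, Legendre's equation~(\ref{eq:ode}) in the variable $L$ has exponents $\{\pm\alpha\}$ at $L=1$, $\{\pm\alpha\}$ at $L=-1$, and $\{\tfrac12-\alpha,\tfrac12+\alpha\}$ at $L=\infty$. Reading~(\ref{eq:m}) as $R+1=4/(L+1)$, the homography sends the singular triple $\{1,-1,\infty\}$ in $L$ to $\{1,\infty,-1\}$ in $R$; in particular it \emph{permutes} the singular points rather than creating new ones. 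Since a second-order Fuchsian equation with exactly three singular points carries $m-3=0$ accessory parameters, its solution space is pinned down by the P-symbol alone. This is the structural simplification of the homographic case, and it means I will not have to compare the lifted equations term-by-term, as is required for the higher-degree curves $\mathcal{C}_r$.

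Second, I would account for the prefactor. Writing $A=[(R+1)/2]^{1/2}$, multiplication by $A$ shifts the exponents at $R=-1$ upward by $\tfrac12$ and those at $R=\infty$ downward by $\tfrac12$, while leaving the analytic, nonvanishing point $R=1$ untouched. Starting from Legendre's equation in $R$ (exponents $\{\pm\alpha\}$ at $R=\pm1$ and $\{\tfrac12\mp\alpha\}$ at $R=\infty$) and applying this shift, the exponents become $\{\pm\alpha\}$ at $R=1$, $\{\tfrac12\pm\alpha\}$ at $R=-1$, and $\{\pm\alpha\}$ at $R=\infty$. These are exactly the exponents that the lift of the $L$-equation must carry at the images $R=1,\infty,-1$ of $L=1,-1,\infty$. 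By the no-accessory-parameter remark it follows that $A\,v_{\alpha-1/2}^{-2\alpha}(R)$, viewed as a function of $L$, satisfies the same Legendre equation $\mathcal{E}_L$ as the left-hand function; hence both sides of each proposed identity lie in a common two-dimensional solution space.

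Third, I would pin down the individual solution in each row by matching leading asymptotics at the singular point natural to that pairing, using~(\ref{eq:asympP}),~(\ref{eq:asympPFerrers}) and~(\ref{eq:asympQhat}). Because the points permute as $1\to1$, $-1\to\infty$, $\infty\to-1$, the first-kind identity $(i)$ is checked at $L=1\leftrightarrow R=1$, where $A\to1$ and $(L-1)\sim(1-R)$ as $p\to1$, so the two first-kind normalizations $[2^\alpha\Gamma(2\alpha+1)]^{-1}$ simply coincide. The reflected identity $(\overline{i})$, pairing the solution recessive at $L=\infty$ with that recessive at $R=-1$, is checked at $L=\infty\leftrightarrow R=-1$; here the scaling $1+R\sim4/L$ together with $A\sim[(1+R)/2]^{1/2}$ converts the $\xbar{\rm P}_{\alpha-1/2}^{-2\alpha}$ normalization into the $\widehat{Q}_{\alpha-1/2}^{-2\alpha}$ normalization, and the duplication formula $\Gamma(2\alpha+1)=\pi^{-1/2}2^{2\alpha}\Gamma(\alpha+\tfrac12)\Gamma(\alpha+1)$ together with the reflection formula $\Gamma(\tfrac12-\alpha)\Gamma(\tfrac12+\alpha)=\pi/\cos(\alpha\pi)$ combine to produce precisely the constant $(2/\pi)\cos(\alpha\pi)$. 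Rows $(ii)$ and $(\overline{ii})$ are the images of $(i)$ and $(\overline{i})$ under interchange of the two domains, matched at $L=1\leftrightarrow R=1$ and at $L=-1\leftrightarrow R=\infty$ respectively, and the connection formulas~(\ref{eq:barP}),~(\ref{eq:widetildeP}) and~(\ref{eq:ferrers}) are what let me pass between the first- and second-kind members of each pair.

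I expect the only real work to be this last bookkeeping of multiplicative constants, in particular verifying that the product of the prefactor scaling, the $\Gamma$-quotients in~(\ref{eq:asympQhat}), and the duplication/reflection identities collapses to the stated $(2/\pi)\cos(\alpha\pi)$; the ODE-invariance is essentially automatic once the homographic nature of $\mathcal{M}$ is noted. Finally, since both sides are analytic in $p$ (and in $\alpha$) off the cuts, I would extend each identity from the stated real $p$-interval to its maximal connected complex domain by analytic continuation, and handle the degenerate parameter values where a $\Gamma$ factor diverges by the same limiting argument already invoked for $\widehat{Q}_\nu^\mu$ in \S\ref{sec:asymptotics}.
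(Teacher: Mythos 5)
Your proposal is correct and follows essentially the same route as the paper's proof of Theorem~\ref{thm:m}: P-symbol/exponent bookkeeping for the homographic change of variable together with the prefactor shift, identification of the two sides as the same Frobenius solution, and fixing the proportionality constant by asymptotic matching at the defining singular point via the duplication and reflection formulas. The only cosmetic differences are that you argue directly on the $L$--$R$ sphere instead of lifting to the $p$-sphere, you make explicit the no-accessory-parameter fact (three singular points) that the paper leaves implicit, and you leave unstated the mechanical substitution $L=\cosh\xi$, $L=\cos\theta$ that turns the rationally parametrized identities into the stated trigonometric forms.
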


\begin{remark*}
  Owing to the invariance under $L\leftrightarrow R$, the pairs
  $M(i),M(\overline{i})$ and $M(ii),M(\overline{ii})$ are equivalent, up to
  parametrization.  When $\alpha$~is a half-odd-integer, these identities
  degenerate or become singular.  If $\alpha=-\frac12,\-\frac32,\dots$,
  both sides of each identity equal zero.  If
  $\alpha=\frac12,\frac32,\dots$, the factor $\cos(\alpha\pi)$ in
  $M(\overline{i})$ and~$M(\overline{ii})$, which equals zero, is
  compensated~for by the factor~$\widehat Q_{\alpha-1/2}^{-2\alpha}$, which
  diverges.  These two identities are still valid in a limiting sense, and
  the singular behavior can be removed by rewriting them in terms of
  Olver's function $\boldsymbol{Q}_{\alpha-1/2}^{-2\alpha}=\allowbreak
  \widehat Q_{\alpha-1/2}^{-2\alpha}/\Gamma(-\alpha+1/2)$, using the
  reflection formula $\cos(\alpha\pi) =\allowbreak
  \pi/\allowbreak\Gamma(\alpha+\nobreak1/2)\allowbreak\Gamma(-\alpha+1/2)$.
\end{remark*}

\begin{proof}[Proof of Theorem~\ref{thm:m}]
Functions $u,v$ satisfy Legendre's equation~(\ref{eq:ode}), of degree
$\nu=\alpha-\nobreak1/2$ and order $\mu=-2\alpha$, if and only if $u(L(p))$
and $A(p)v(R(p))$ both satisfy a certain second-order differential equation
with independent variable~$p$, which is obtained by lifting.  This can be
checked by a P-symbol calculation, noting that the inverse images of the
singular points $1,-1,\infty$ are $p=1,0,\infty$ under $L=L(p)$, and
$p=1,\infty,0$ under $R=R(p)$.  The left and right P-symbols are
\begin{equation}
  \left\{\!
  \begin{array}{ccc|c}
    1 & -1 & \infty & L(p) \\
    \hline
    \alpha & \alpha & -\alpha+1/2 & \\
    -\alpha & -\alpha & \alpha+1/2
  \end{array}
  \!\right\}
  =
  \left\{\!
  \begin{array}{ccc|c}
    1 & 0 & \infty & p \\
    \hline
    \alpha & \alpha & -\alpha+1/2 & \\
    -\alpha & -\alpha & \alpha+1/2
  \end{array}
  \!\right\},
\end{equation}
\begin{align}
  \sqrt{\frac1p}
  \left\{\!
  \begin{array}{ccc|c}
    1 & -1 & \infty & R(p) \\
    \hline
    \alpha & \alpha & -\alpha+1/2 & \\
    -\alpha & -\alpha & \alpha+1/2
  \end{array}
  \!\right\}
  &=
  \sqrt{\frac1p}
  \left\{\!
  \begin{array}{ccc|c}
    1 & \infty & 0 & p \\
    \hline
    \alpha & \alpha & -\alpha+1/2 & \\
    -\alpha & -\alpha & \alpha+1/2
  \end{array}
  \!\right\}\nonumber\\
  &=
  \left\{\!
  \begin{array}{ccc|c}
    1 & \infty & 0 & p \\
    \hline
    \alpha & \alpha+1/2 & -\alpha & \\
    -\alpha & -\alpha+1/2 & \alpha
  \end{array}
  \!\right\},
\end{align}
which are equivalent.  Put differently, the $L\mapsto R$ homography,
followed by a linear change of the dependent variable coming from the
prefactor $A(p)=\sqrt{1/p}$, takes Legendre's equation to itself.

It remains to show that for each $u,v$ listed in the theorem, the functions
$u(L(p))$ and $A(p)v(R(p))$ are the \emph{same} element of the
two-dimensional solution space of the lifted equation.  First, notice that
this is true up~to some constant factor, since each of $u,v$ is a Frobenius
solution associated with a singular point and one of its exponents, and the
points and exponents correspond.  In identity $M(\overline{i})$ for
example, $u=u(L) =\allowbreak (2/\pi)\cos(\alpha\pi)\widehat
Q_{\alpha-1/2}^{-2\alpha}(L)$ is a Frobenius solution at~$L=\infty$ with
exponent $\alpha+1/2$, and $v=v(R) = \xbar{\rm
  P}_{\alpha-1/2}^{-2\alpha}(R)$ is one at~$R=-1$ with exponent~$\alpha$.
Examining the above P-symbols reveals that for~$M(\overline{i})$, both
$u(L(p))$ and $A(p)v(R(p))$, which are Frobenius solutions of the lifted
equation, are associated to its singular point $p=\infty$ and the
exponent~$\alpha+1/2$; so they must be constant multiples of each other.

Finally, one must check that for each identity, the constant of
proportionality equals unity.  This follows by comparing the left and right
sides near the common singular point.  Identity $M(\overline{i})$ is
typical.  To compare left and right asymptotics, one uses
\begin{subequations}
\begin{alignat}{2}
\widehat Q_{\alpha-1/2}^{-2\alpha}(L) &\sim \frac{\sqrt{\pi}\,
  \Gamma(-\alpha+1/2)}{2^{\alpha+1/2}\,
  \Gamma(\alpha+1)}\:(1/L)^{\alpha+1/2},&& \qquad L\to\infty,\\
\xbar{\rm P}_{\alpha-1/2}^{-2\alpha}(R) &\sim
\frac{\sqrt{\pi}}{2^\alpha\,\Gamma(2\alpha+1)}\:(1+R)^\alpha,&&  \qquad R\to-1,
\end{alignat}
\end{subequations}
which come from (\ref{eq:asympQhat}),(\ref{eq:asympP}).  A bit of
calculation, using the duplication formula for the gamma function, shows
that the two sides behave identically at the $p=\infty$ singular point;
specifically,
\begin{equation}
  u(L(p)),\,A(p)v(R(p))\sim
  \frac{\sqrt{\pi}}{\Gamma(2\alpha+1)}\,(1/p)^{\alpha+1/2}, \qquad p\to\infty.
\end{equation}
The other three identities are proved similarly.
\end{proof}

\subsection{Whipple's formula}

Theorem~\ref{thm:w2} below is a version of Whipple's $Q\leftrightarrow P$
transformation formula, but the proof given below is simpler than the
original~\cite{Whipple16}: no~integral representations are used.  Since the
algebraic change of variable from $L$ to~$R$ was introduced by Whipple, the
underlying curve is denoted~$\mathcal{W}_2$.

\begin{definition*}
  The algebraic $L$--$R$ curve $\mathcal{W}_2$ is the curve
  \begin{equation}
    \label{eq:w2}
    (L^2-1)(R^2-1)-1 = 0,
  \end{equation}
  which is rationally parametrized by
  \begin{displaymath}
    L =\frac{(p+1)^2 + (p-1)^2}{(p+1)^2 - (p-1)^2} = \frac{p^2+1}{2p}, \qquad
    R = \frac{p^2 + 1}{p^2-1},
  \end{displaymath}
  and is invariant under $L\leftrightarrow R$, $L\mapsto -L$ and $R\mapsto
  -R$, which are performed by $p\mapsto\allowbreak
  (p+\nobreak1)/\allowbreak(p-\nobreak1)$, $1/p$ and~$-p$, and under the
  group they generate (which can be viewed as a dihedral group of
  order~$8$, acting on $p=-1,0,1,\infty$).  An associated prefactor
  function $A=A(p)$, equal to unity when $p=1+\sqrt2$ and $L=R$, is
  \begin{displaymath}
    A(p) = (L^2-1)^{-1/4} = (R^2-1)^{1/4} = \sqrt{\frac{2p}{p^2-1}}.
  \end{displaymath}
\end{definition*}

\begin{theorem}
  \label{thm:w2}
  For each pair $u,v$ of Legendre functions listed below, an
  identity
  \begin{displaymath}
     u_{\alpha-1/2}^{-\beta}(L(p)) = \frac{\sqrt\pi}{\Gamma(\beta-\alpha+\frac12)}\,A(p)\, 
v_{\beta-1/2}^{-\alpha}(R(p))
  \end{displaymath}
  of type\/ $W_2$, coming from the curve\/ $\mathcal{W}_2$, holds for the
  specified range of values of the parameter\/ $p$.

\medskip\medskip
\begin{tabular}{llllll}
\hline
Label &$u_{\alpha-1/2}^{-\beta}$ &$v_{\beta-1/2}^{-\alpha}$ &$p$ range & $L$ range & $R$ range\\
\hline
\hline
(i) & $\sqrt{2}\,P_{\alpha-1/2}^{-\beta}$ & $(2/\pi)\widehat{Q}_{\beta-1/2}^{-\alpha}$ & $(1,\infty)$ & $1<L<\infty$ & $\infty>R>1$ \\
$(\overline{i})$ & $(2/\pi)\cos[(\beta-\alpha)\pi]\,\widehat Q_{\alpha-1/2}^{-\beta}$ & $\sqrt{2}\,P_{\beta-1/2}^{-\alpha}$ &  & & \\
\hline
\end{tabular}
\end{theorem}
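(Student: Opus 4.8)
The plan is to repeat the strategy used for Theorem~\ref{thm:m}: produce a single second-order Fuchsian equation in the parameter~$p$ that is satisfied by both sides, recognize the two sides as the \emph{same} Frobenius solution of that equation, and then fix the constant of proportionality by comparing leading coefficients at one singular point.

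First I would lift Legendre's equation through each of the two degree-two maps and compare P-symbols. The map $L(p)=(p^2+1)/2p$ carries the singular points $L=1,-1$ back to the double points $p=1,-1$ and carries $L=\infty$ back to the simple points $p=0,\infty$; the map $R(p)=(p^2+1)/(p^2-1)$ carries $R=1,-1$ back to the double points $p=\infty,0$ and carries $R=\infty$ back to the simple points $p=1,-1$. Multiplying the exponents at each double point by the ramification index~$2$, the equation of degree $\alpha-\frac12$ and order~$-\beta$ lifts through $L(p)$ to a P-symbol on the $p$-sphere with singular points $1,-1,0,\infty$ and exponent pairs $\{\beta,-\beta\}$, $\{\beta,-\beta\}$, $\{\frac12-\alpha,\frac12+\alpha\}$, $\{\frac12-\alpha,\frac12+\alpha\}$. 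Lifting the equation of degree $\beta-\frac12$ and order~$-\alpha$ through $R(p)$, and then shifting exponents by the prefactor $A(p)=\sqrt{2p/(p^2-1)}$ — which behaves like $(p\mp1)^{-1/2}$ at $p=\pm1$, like $p^{1/2}$ at $p=0$, and like $p^{-1/2}$ at $p=\infty$ — produces the identical P-symbol.

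The main obstacle is that these lifted equations have four singular points, hence a single accessory parameter, so coincidence of P-symbols does not by itself guarantee that $\mathcal{E}_L$ and $\mathcal{E}_R$ agree. As stressed in~\S\,\ref{sec:suppl}, I would therefore write the two lifted equations out explicitly and compare them term by term, verifying that the lone accessory parameter matches; the substitutions $L^2-1=(p^2-1)^2/4p^2$ and $R^2-1=4p^2/(p^2-1)^2$ keep the algebra in hand, although the two free parameters $\alpha,\beta$ make this heavier than the one-parameter cases treated earlier. This is the crux of the argument.

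Once $\mathcal{E}_L=\mathcal{E}_R$ is in place the remainder is routine. For identity~(i), $u=\sqrt2\,P_{\alpha-1/2}^{-\beta}(L)$ is the Frobenius solution at $p=1$ with exponent~$\beta$ (the $+\beta/2$ exponent at $L=1$, doubled), while $A(p)v(R(p))$ with $v=(2/\pi)\widehat Q_{\beta-1/2}^{-\alpha}(R)$ is likewise the solution at $p=1$ with exponent~$\beta$ (the $\beta+\frac12$ exponent of $\widehat Q$ at $R=\infty$, lowered by~$\frac12$ through~$A$). For generic~$\beta$ this recessive solution is unique up to a scalar, so the two sides are proportional, and the scalar is pinned by inserting the asymptotics (\ref{eq:asympP}) and~(\ref{eq:asympQhat}) together with $L-1\sim(p-1)^2/2$ and $1/R\sim p-1$ as $p\to1$. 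The factor $\Gamma(\beta-\alpha+\frac12)$ coming from the asymptotics of $\widehat Q$ is cancelled exactly by the reciprocal factor in the stated prefactor, the surviving powers of~$2$ balance, and the constant of proportionality equals~$1$, as required. Identity~$(\overline i)$ is handled identically, matching this time at $p=\infty$, where both sides carry the exponent $\alpha+\frac12$; the conclusion then extends from $p\in(1,\infty)$ to the full complex domain by analytic continuation.
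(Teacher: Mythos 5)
Your proposal is correct and follows essentially the same route as the paper's own proof: lift Legendre's equation through both quadratic covering maps, observe that agreement of the four-point P-symbols is necessary but not sufficient (because of the single accessory parameter) so that the lifted equations $\mathcal{E}_L,\mathcal{E}_R$ must be written out and compared term by term, then identify the two sides of each identity as the same Frobenius solution at $p=1$ (for~$(i)$) or $p=\infty$ (for~$(\overline{i})$) and fix the constant from the asymptotics (\ref{eq:asympP}), (\ref{eq:asympQhat}). The only cosmetic differences are that the paper displays the common lifted equation explicitly and selects the admissible pairs $(u,v)$ via its table of monotone $p$-interval maps, but the substance of the argument is identical.
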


\smallskip
To construct trigonometric versions of these identities, one substitutes
$L=\cosh\xi$ into the relation~(\ref{eq:w2}), and solves for~$R$ as a
function of~$\xi$; it equals $\coth\xi$.  This yields the following
versions, which for the sake of symmetry are expressed in~terms of Olver's
function~$\boldsymbol Q$ rather than~$\widehat Q$.  Owing to the
$L\leftrightarrow R$ invariance, they are equivalent to each other up~to
parametrization, and are also equivalent to Whipple's $Q\leftrightarrow P$
formula, eq.~(\ref{eq:whipple0}).

\begin{corollary}
  The following identities coming from\/ $\mathcal{W}_2$ hold for
  $\alpha,\beta\in\mathbb{C}$, when $\xi\in(0,\infty)$.  
  \begin{align*}
    &W_2(i):\quad 
    P_{\alpha-1/2}^{-\beta}(\cosh\xi) =
    \sqrt{2/\pi}\,\sqrt{{\mathrm{csch}}\,\xi}\:
    \boldsymbol{Q}_{\beta-1/2}^{-\alpha}\left(\coth\xi\right);&\\ &W_2(\overline{i}):\quad
    \boldsymbol{Q}_{\alpha-1/2}^{-\beta}(\cosh\xi) =
    \sqrt{\pi/2}\,\sqrt{{\mathrm{csch}}\,\xi}\:
    {P}_{\beta-1/2}^{-\alpha}\left(\coth\xi\right)
    .&
  \end{align*}
\end{corollary}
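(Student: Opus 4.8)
The final Corollary is the trigonometric repackaging of Theorem~\ref{thm:w2}, so the plan is to take that theorem as given (it is stated above) and specialize. Setting $L=\cosh\xi$ in the defining relation $(L^2-1)(R^2-1)=1$ of $\mathcal{W}_2$ gives $R^2-1=\operatorname{csch}^2\xi$, hence $R=\coth\xi$ on the branch $R>1$ appropriate to $\xi\in(0,\infty)$; equivalently one puts $p=e^\xi$ in the rational parametrization, which returns $L=\cosh\xi$ and $R=\coth\xi$ at once. The prefactor then collapses: $A(p)=\sqrt{2p/(p^2-1)}$ at $p=e^\xi$ equals $\sqrt{1/\sinh\xi}=\sqrt{\operatorname{csch}\xi}$. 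Substituting these into $W_2(i)$ of the theorem and rewriting the \emph{ad hoc} function through Olver's via $\widehat Q_{\beta-1/2}^{-\alpha}=\Gamma(\beta-\alpha+\tfrac12)\,\boldsymbol Q_{\beta-1/2}^{-\alpha}$ causes the two factors $\Gamma(\beta-\alpha+\tfrac12)$ to cancel; the surviving constant is $2/(\sqrt2\sqrt\pi)=\sqrt{2/\pi}$, exactly as displayed. The companion $W_2(\overline i)$ follows the same way, the factor $\cos[(\beta-\alpha)\pi]$ combining with $\Gamma(\alpha-\beta+\tfrac12)$ through the reflection formula to leave a clean $\boldsymbol Q$. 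At this level there is no real obstacle: the work is the gamma bookkeeping and the choice of branch for $R$.

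The substance therefore lies in Theorem~\ref{thm:w2}, which I would prove by the lifting/P-symbol method already used for Theorem~\ref{thm:m}. First I would locate the preimages of the Legendre singular points: under $L=(p^2+1)/2p$ the values $L=1,-1$ are branch values (preimages $p=1,-1$, each of multiplicity two) while $L=\infty$ is unramified (preimages $p=0,\infty$); under $R=(p^2+1)/(p^2-1)$ the values $R=1,-1$ are branch values (preimages $p=\infty,0$) and $R=\infty$ is unramified (preimages $p=\pm1$). Lifting the two Legendre equations --- degree $\alpha-\tfrac12$, order $-\beta$ on the left; degree $\beta-\tfrac12$, order $-\alpha$ on the right --- and, on the right, incorporating the exponent shifts produced by the prefactor $A(p)$ (namely $-\tfrac12$ at $p=\pm1$ and $+\tfrac12$ at $p=0,\infty$), both sides yield a Fuchsian equation on $\mathbb{P}^1_p$ with singular points $\{1,-1,0,\infty\}$, exponents $\{\pm\beta\}$ at $p=\pm1$ and $\{\tfrac12-\alpha,\alpha+\tfrac12\}$ at $p=0,\infty$; these P-symbols coincide and satisfy the Fuchs relation, the exponent sum being $2=m-2$.

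Here is the main obstacle. Unlike Theorem~\ref{thm:m}, whose change of variable was a homography with only three singular points and hence no freedom, the maps $L(p),R(p)$ have degree two, so the lifted equation carries four singular points and one accessory parameter. Agreement of P-symbols is therefore necessary but not sufficient, and --- as stressed in \S\ref{sec:suppl} --- I must verify that the two lifts genuinely coincide term by term, i.e.\ that their single accessory parameters agree. I would do this either by writing each lift in normalized form $w''+P(p)w'+Q(p)w=0$ and comparing the one free coefficient, or more cleanly by exploiting the deck symmetries $p\mapsto1/p$ (for the $L$-map, via $L=\tfrac12(p+1/p)$) and $p\mapsto-p$ (for the $R$-map, via $R=(p^2+1)/(p^2-1)$), which pin down the accessory parameter on each side. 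This is the only step that goes beyond what Theorem~\ref{thm:m} required.

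Once the two lifts are shown equal, call the common equation $\mathcal{E}$. For $W_2(i)$ the left member $\sqrt2\,P_{\alpha-1/2}^{-\beta}(L(p))$ is, by \eqref{eq:asympP}, the Frobenius solution of $\mathcal{E}$ at $p=1$ with exponent $\beta$, and so is $A(p)(2/\pi)\widehat Q_{\beta-1/2}^{-\alpha}(R(p))$ (the $R\to\infty$ behavior \eqref{eq:asympQhat}, lifted and shifted by $A$); hence the two are proportional. I would fix the constant by matching leading coefficients as $p\to1$, using $L-1\sim(p-1)^2/2$, $R\sim1/(p-1)$, $A\sim(p-1)^{-1/2}$ together with \eqref{eq:asympP},\eqref{eq:asympQhat}; the normalizations $\sqrt2$, $2/\pi$ and the factor $\sqrt\pi/\Gamma(\beta-\alpha+\tfrac12)$ are calibrated so that the proportionality constant is unity. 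Finally, $W_2(\overline i)$ need not be redone: it follows from $W_2(i)$ by the $L\leftrightarrow R$ symmetry of $\mathcal{W}_2$ (the homography $p\mapsto(p+1)/(p-1)$, under which $A\mapsto1/A$) together with the interchange $\alpha\leftrightarrow\beta$, the factor $\cos[(\beta-\alpha)\pi]$ emerging from the reflection formula $\Gamma(\tfrac12+z)\Gamma(\tfrac12-z)=\pi/\cos\pi z$.
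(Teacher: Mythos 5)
Your proposal is correct and follows essentially the same route as the paper's: the corollary is obtained from Theorem~\ref{thm:w2} by setting $p={\rm e}^{\xi}$ (so $L=\cosh\xi$, $R=\coth\xi$, $A=\sqrt{\mathrm{csch}\,\xi}$) and absorbing $\Gamma(\beta-\alpha+\tfrac12)$ into Olver's $\boldsymbol{Q}$, and the theorem itself is proved exactly as in \S\,4.2 --- lifting along $L(p),R(p)$, noting that P-symbol agreement is insufficient with four singular points and comparing the lifted equations explicitly, identifying Frobenius solutions, and fixing constants at the defining singular point (your derivation of $W_2(\overline{i})$ from $W_2(i)$ via the $L\leftrightarrow R$ involution is a legitimate shortcut that the paper's corollary remark itself sanctions). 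One caution on your ``cleaner'' alternative: showing that $\mathcal{E}_L$ is the unique equation with the given P-symbol invariant under $p\mapsto1/p$ and that $\mathcal{E}_R$ is the unique one invariant under $p\mapsto-p$ does not by itself force $\mathcal{E}_L=\mathcal{E}_R$; you would additionally need that both lifts share a \emph{common} involution (e.g.\ both are invariant under $p\mapsto-p$, since $L(-p)=-L(p)$ and Legendre's equation is invariant under $z\mapsto-z$), after which descent to a three-singular-point equation in $s=p^2$ closes the argument --- so the explicit term-by-term comparison you list first should remain the actual verification.
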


\begin{proof}[Proof of Theorem~\ref{thm:w2}]
Similarly to the proof of Theorem~\ref{thm:m}, this follows from lifting
Legendre's equation~(\ref{eq:ode}), now of degree $\nu=\alpha-1/2$ and
order $\mu=-\beta$, to the $p$-sphere, along the covering maps $L=L(p)$ and
$R=R(p)$.  (The latter lifting is followed by a linear change of the
dependent variable coming from the prefactor $A(p)=\sqrt{2p/(p^2-1)}$.)
The inverse image of the set of singular points $\{1,-1,\infty\}$ under
either $L$ or~$R$ is the subset $\{1,-1,\infty,0\}$ of the $p$-sphere,
which comprises the vertices of a square, and the left and right P-symbols
both turn~out to be
\begin{equation}
  \left\{\!
  \begin{array}{cccc|c}
    1 & -1 & \infty & 0 & p \\
    \hline
    -\beta & -\beta & -\alpha+1/2 & -\alpha+1/2 & \\
    \beta & \beta & \alpha+1/2 & \alpha+1/2 &
  \end{array}
  \!\right\},
\end{equation}
when account is taken of the fact that
$L^{-1}(1),L^{-1}(-1),R^{-1}(1),R^{-1}(-1)$, which respectively equal
$1,-1,\infty,0$, have double multiplicity.  However, since
$\{1,-1,\infty,0\}$ has cardinality greater than three, equality of the
P-symbols, though necessary, is not sufficient for the lifted equations
$\mathcal{E}_L$ and~$\mathcal{E}_R$ to equal each other.  They take the
identical form $\widetilde{\mathcal{L}}\tilde u=0$, in~fact they are both
\begin{equation}
  \frac{{\rm d}^2\tilde u}{{\rm d}p^2}
  +\frac{2p}{p^2-1}   \frac{{\rm d}\tilde u}{{\rm d}p}
  +\left[
    \frac{1-4\alpha^2}{4p^2} - \frac{4\beta^2}{(p^2-1)^2}
    \right]\tilde u
  = 0,
\end{equation}
but verifying this requires a separate calculation.

The identities of the theorem, each based on a pair $(u,v)$, come from the
table
\begin{equation}
\label{eq:tablefour}
\begin{tabular}{c||cccccc}
$p$ & $-\infty$ & $-1$ & $0$ & $1$ & $+\infty$ \\
\hline
$L(p)$ & $-\infty$ & $-1^*$ & $-\infty$/$+\infty$ & $1^*$ & $+\infty$ \\
$R(p)$ &$1^*$  & $+\infty$/$-\infty$ & $-1^*$ & $-\infty$/$+\infty$ & $1^*$,
\end{tabular}
\end{equation}
which shows how each of the four intervals $(-\infty,-1)$, $(-1,0)$,
$(0,1)$, $(1,+\infty)$, into which the real $p$-line is divided by the
singular points, is mapped monotonically onto a real $L$-interval and a
real $R$-interval.  (An asterisk indicates a change of direction.)  For
each $p$-interval, the possible $(u,v)$ are determined thus: if each of $u$
and~$v$ is to be one of $P,{\rm P},\xbar{\rm P},\widehat Q$, then the
defining singular points of $u(L)$ and~$v(R)$ (namely, $1$~for $P$
and~${\rm P}$, $-1$~for~$\xbar{\rm P}$ and $\infty$ for~$\widehat Q$) must
correspond, in the sense that both must be at the same end of the interval.
If so, $u(L(p))$ and $A(p)v(R(p))$ will be the same Frobenius solution in
the (two-dimensional) solution space of $\mathcal{E}_L=\mathcal{E}_R$, up
to a constant factor.

It should be noted that on the real axis, $P,\widehat Q$ are only defined
on~$(1,\infty)$, and ${\rm P},\xbar{\rm P}$ on~$(-1,1)$.  But for each
$p$-interval in~(\ref{eq:tablefour}) other than $(1,+\infty)$, either $L$
or~$R$ ranges between $-1$ and~$-\infty$.  So the only interval that will
work is $(1,+\infty)$; and for~it, there are exactly two possibilities for
$(u,v)$, namely $(P,\widehat Q)$ and~$(\widehat Q,P)$.  These yield
$W_2(i)$ and~$W_2(\overline{i})$.  For each identity, the prefactors in the
theorem are calculated by requiring agreement between the leading-order
behaviors of the left and right sides at the relevant singular point (i.e.,
at $p=1$ for~$W_2(i)$ and $p=\infty$ for~$W_2(\overline{i})$).
\end{proof}

\subsection{Whipple-like relations}

Theorem~\ref{thm:w4} below contains an unexpected pair of Whipple-like
identities, which are based on an algebraic curve of higher degree.  Since
its parameterization resembles that of the Whipple curve~$\mathcal{W}_2$,
with squares replaced by fourth powers, it is denoted~$\mathcal{W}_4$, and
the resulting identities are said to be of type~$W_4$.

\begin{definition*}
  The algebraic $L$--$R$ curve $\mathcal{W}_4$ is the curve
  \begin{equation}
    \label{eq:w4}
    16(L^2-1)(R^2-1)(4L^2 + 4R^2 - 5) - 1 = 0,
  \end{equation}
  which is rationally parametrized by
  \begin{displaymath}
    L =\frac{(p+1)^4 + (p-1)^4}{(p+1)^4 - (p-1)^4} = \frac{p^4 + 6p^2 + 1}{4p(p^2+1)}, \qquad
    R = \frac{p^4 + 1}{p^4-1},
  \end{displaymath}
  and is invariant under $L\leftrightarrow R$, $L\mapsto -L$ and $R\mapsto
  -R$, which are performed by $p\mapsto\allowbreak
  (p+\nobreak1)/\allowbreak(p-\nobreak1)$, $1/p$ and~$-p$, and under the
  group they generate (the same as for~$\mathcal{W}_2$).  An associated
  prefactor function $A=A(p)$, equal to unity when $p=1+\sqrt2$ and $L=R$,
  is
  \begin{displaymath}
    A(p) = \left[\frac{R^2-1}{L^2-1}\right]^{1/12} = \sqrt{\frac{2p}{p^2-1}}.
  \end{displaymath}
\end{definition*}

\begin{theorem}
  \label{thm:w4}
  For each pair $u,v$ of Legendre functions listed below, an
  identity
  \begin{displaymath}
     u_{2\alpha-1/2}^{-\alpha}(L(p)) = A(p)\, 
v_{2\alpha-1/2}^{-\alpha}(R(p))
  \end{displaymath}
  of type\/ $W_4$, coming from the curve\/ $\mathcal{W}_4$, holds for the
  specified range of values of the parameter\/ $p$.

\medskip\medskip
\begin{tabular}{llllll}
\hline
Label &$u_{2\alpha-1/2}^{-\alpha}$ &$v_{2\alpha-1/2}^{-\alpha}$ &$p$ range & $L$ range & $R$ range\\
\hline
\hline
(i) & ${2}P_{2\alpha-1/2}^{-\alpha}$ & $(2/\pi)\widehat{Q}_{2\alpha-1/2}^{-\alpha}$ & $(1,\infty)$ & $1<L<\infty$ & $\infty>R>1$ \\
$(\overline{i})$ & $(2/\pi)\widehat Q_{2\alpha-1/2}^{-\alpha}$ & ${2}P_{2\alpha-1/2}^{-\alpha}$ &  & & \\
\hline
\end{tabular}
\end{theorem}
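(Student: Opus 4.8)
The plan is to adapt the lifting argument used for Theorems~\ref{thm:m} and~\ref{thm:w2}. I would lift Legendre's equation~(\ref{eq:ode}), now of degree $\nu=2\alpha-\tfrac12$ and order $\mu=-\alpha$, to the $p$-sphere along each of the covering maps $L=L(p)$ and $R=R(p)$, the latter composed with the change of dependent variable supplied by $A(p)=\sqrt{2p/(p^2-1)}$, and then prove that both liftings yield one and the same Fuchsian equation $\widetilde{\mathcal L}\tilde u=0$. The first task is the ramification bookkeeping. Both $L$ and $R$ are rational of degree~$4$; one computes $L(p)-1=(p-1)^4/[4p(p^2+1)]$ and $L(p)+1=(p+1)^4/[4p(p^2+1)]$, so $L^{-1}(1)=\{1\}$ and $L^{-1}(-1)=\{-1\}$ are totally ramified (index~$4$), while $L^{-1}(\infty)=\{0,{\rm i},-{\rm i},\infty\}$ is unramified; dually $R^{-1}(1)=\{\infty\}$ and $R^{-1}(-1)=\{0\}$ are totally ramified and $R^{-1}(\infty)=\{1,-1,{\rm i},-{\rm i}\}$ is unramified. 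Thus the inverse image of $\{1,-1,\infty\}$ under either map is the single six-point set $\{0,\infty,1,-1,{\rm i},-{\rm i}\}$, the vertices of a regular octahedron on the $p$-sphere, exactly as~(\ref{eq:w4}) and its stated invariances suggest.

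Next I would assemble and compare the two P-symbols. Multiplying the Legendre exponents $\{\pm\alpha/2\}$ at $z=\pm1$ and $\{-2\alpha+\tfrac12,\,2\alpha+\tfrac12\}$ at $z=\infty$ by the ramification indices gives, for the $L$-lifting, exponents $\{\pm2\alpha\}$ at $p=\pm1$ and $\{-2\alpha+\tfrac12,\,2\alpha+\tfrac12\}$ at each of $0,\infty,{\rm i},-{\rm i}$. For the bare $R$-lifting these two types of point trade places; writing $A(p)=\sqrt2\,p^{1/2}(p-1)^{-1/2}(p+1)^{-1/2}$, the prefactor then raises the exponents at $p=0$ by $\tfrac12$, lowers those at $p=\pm1$ by $\tfrac12$, and raises those at $p=\infty$ by $\tfrac12$, after which the two P-symbols coincide point by point. (As a check, each obeys the Fuchs relation, its twelve exponents summing to $6-2=4$.) As emphasised in the proof of Theorem~\ref{thm:w2}, with six singular points this agreement is necessary but not sufficient: the configuration carries $6-3=3$ accessory parameters, so I must show that the two lifted operators are literally identical, not merely that they share a P-symbol.

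That explicit term-by-term identification of the two lifted second-order operators is the step I expect to be the main obstacle, being appreciably heavier than the square-configuration calculation behind Theorem~\ref{thm:w2}. The natural way to control it is to use the symmetry recorded with~(\ref{eq:w4}): the involution $L\leftrightarrow R$ effected by $p\mapsto(p+1)/(p-1)$, together with $L\mapsto-L$ (via $p\mapsto1/p$) and $R\mapsto-R$ (via $p\mapsto-p$), generates the order-$8$ dihedral group permuting the octahedral vertices, and the common operator must be invariant under it. This invariance rigidly constrains the rational coefficients and reduces the comparison to the few independent accessory parameters, which can then be matched by a local expansion at a single vertex. A second, optional shortcut is the factorization $R(p)=(p^4+1)/(p^4-1)$, which is precisely the $\mathcal W_2$ right-hand map evaluated at $p^2$; this lets part of the $R$-side lifting be routed through the already-established equation of Theorem~\ref{thm:w2} (with $\beta=2\alpha$), leaving only the $L$-side to be reconciled.

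Once the two liftings are known to coincide, the rest is routine and parallels Theorems~\ref{thm:m} and~\ref{thm:w2}. For $W_4(i)$, the left side $2P_{2\alpha-1/2}^{-\alpha}(L(p))$ is the Frobenius solution at $p=1$ with exponent $2\alpha$ (the $+\mu/2=\alpha/2$ exponent of $P_\nu^{-\mu}$ at $z=1$, quadrupled by ramification), while the right side $A(p)(2/\pi)\widehat Q_{2\alpha-1/2}^{-\alpha}(R(p))$, coming from the $\nu+1=2\alpha+\tfrac12$ exponent of $\widehat Q$ at $z=\infty$ and then lowered by $\tfrac12$ by $A$, is also the exponent-$2\alpha$ solution at $p=1$; hence the two sides agree up to a constant. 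To fix that constant at unity I would match leading coefficients at $p=1$, where $L-1\sim(p-1)^4/8$, $1/R\sim2(p-1)$ and $A\sim(p-1)^{-1/2}$, feeding these into the asymptotics~(\ref{eq:asympP}) and~(\ref{eq:asympQhat}); the ratio then collapses to~$1$ by the gamma duplication identity $\sqrt\pi\,\Gamma(2\alpha+1)=2^{2\alpha}\Gamma(\alpha+1)\Gamma(\alpha+\tfrac12)$, exactly as in the earlier proofs. The companion identity $W_4(\overline i)$ is settled in the same manner at the vertex $p=\infty$, where $(2/\pi)\widehat Q$ on the left and $2P$ on the right both carry exponent $2\alpha+\tfrac12$; alternatively it follows from $W_4(i)$ by the $L\leftrightarrow R$ symmetry of $\mathcal W_4$.
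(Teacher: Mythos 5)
Your proposal reproduces the paper's own proof essentially step for step: lift Legendre's equation of degree $2\alpha-\tfrac12$ and order $-\alpha$ along $L(p)$ and, with the prefactor $A(p)$, along $R(p)$; observe that both lifted equations have the six octahedral singular points $\{0,\infty,1,-1,\mathrm{i},-\mathrm{i}\}$ with matching exponents; note that P-symbol agreement alone is insufficient (three accessory parameters) so the two lifted operators must be checked to coincide term by term; and finally identify the two sides of $W_4(i)$ (resp.\ $W_4(\overline{i})$) as the same Frobenius solution at $p=1$ (resp.\ $p=\infty$), fixing the constant to unity via the asymptotics (\ref{eq:asympP}), (\ref{eq:asympQhat}) and the gamma duplication formula, exactly as the paper does. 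The paper simply performs the term-by-term identification by direct computation and records the resulting operator, rather than using your dihedral-symmetry reduction or the factorization $R(p)=(p^4+1)/(p^4-1)$ through the $\mathcal{W}_2$ map at $p^2$; these are organizational refinements (the symmetry one should be used with care, since invariance of the \emph{common} operator under the full order-$8$ group presupposes $\mathcal{E}_L=\mathcal{E}_R$, only the Klein four-subgroup acting a priori on each equation separately), not a different route.
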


\smallskip
To construct trigonometric versions of these identities, one substitutes
$L=\coth\xi$ into the relation~(\ref{eq:w4}), and solves for~$R$ as a
function of~$\xi$.  This yields the following, which owing to the
$L\leftrightarrow R$ invariance, are equivalent up~to parametrization.

\begin{corollary}
  The following identities coming from\/ $\mathcal{W}_4$ hold for
  $\alpha\in\mathbb{C}$, when $\xi\in(0,\infty)$.  
  \begin{align*}
    &W_4(i):\quad 
    2P_{2\alpha-1/2}^{-\alpha}(\coth\xi)
    =
    \sqrt{{\mathrm{sinh}}(\xi/2)}\:\,(2/\pi)
    \widehat{Q}_{2\alpha-1/2}^{-\alpha}\left(\frac{\cosh(\xi/2) + \mathrm{sech}(\xi/2)}2\right);&
\\
    &W_4(\overline{i}):\quad
    (2/\pi)\widehat{Q}_{2\alpha-1/2}^{-\alpha}(\coth\xi) 
    =
    \sqrt{{\mathrm{sinh}}(\xi/2)}\:\,2P_{2\alpha-1/2}^{-\alpha}\left(\frac{\cosh(\xi/2) + \mathrm{sech}(\xi/2)}2\right).&
  \end{align*}
\end{corollary}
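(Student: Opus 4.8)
The plan is to follow the method used for Whipple's formula in Theorem~\ref{thm:w2}. I would lift Legendre's equation~(\ref{eq:ode}), here of degree $\nu=2\alpha-\tfrac12$ and order $\mu=-\alpha$, to the $p$-sphere along each of the two covering maps $L=L(p)$ and $R=R(p)$, the latter lift being followed by the linear change of dependent variable coming from the prefactor $A(p)=\sqrt{2p/(p^2-1)}$, and then show that the two lifted equations $\mathcal{E}_L$ and $\mathcal{E}_R$ are identical. The first task is to locate the singular points. Reading off the parametrization and~(\ref{eq:w4}), one finds that $L$ takes the values $1,-1$ only at $p=1,-1$ (each a fourfold ramification point) and the value $\infty$ at the four simple points $p=0,i,-i,\infty$; dually, $R$ takes the values $1,-1$ only at $p=\infty,0$ (fourfold) and the value $\infty$ at the four simple points $p=1,-1,i,-i$. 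Thus under \emph{either} map the inverse image of $\{1,-1,\infty\}$ is the six-point set $\{0,1,-1,i,-i,\infty\}$, the vertices of a regular octahedron on the $p$-sphere.

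Next I would run the P-symbol calculus. Multiplying the Legendre exponents $\{\alpha/2,-\alpha/2\}$ at $z=\pm1$ by the ramification index~$4$, and leaving the exponents $\{-2\alpha+\tfrac12,\,2\alpha+\tfrac12\}$ (coming from $z=\infty$) unchanged at the four simple points, produces the $\mathcal{E}_L$ exponents; the raw $\mathcal{E}_R$ exponents are obtained likewise. One then checks that premultiplication by $A(p)=\sqrt2\,p^{1/2}(p-1)^{-1/2}(p+1)^{-1/2}$ shifts the raw $R$-exponents by $+\tfrac12$ at $p=0,\infty$, by $-\tfrac12$ at $p=\pm1$, and not at all at $p=\pm i$, after which the two P-symbols agree at all six points. (The Fuchs relation, total exponent sum $=4=6-2$, furnishes a convenient check.) Because there are six singular points, however, this agreement is only necessary, not sufficient: the equation carries $6-3=3$ accessory parameters, so I must compare $\mathcal{E}_L$ and $\mathcal{E}_R$ as \emph{explicit} second-order equations, exactly as was required for the four-point curve~$\mathcal{W}_2$.

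This explicit comparison is the main obstacle. Writing each lifted equation in the form $\tilde u''+P(p)\tilde u'+Q(p)\tilde u=0$, the matched P-symbols already force a common $P(p)=\tfrac{2p}{p^2-1}$ and a common double-pole part of~$Q(p)$; what remains is to verify that the three accessory coefficients (the simple-pole residues of $Q$ at the finite singular points, constrained by the prescribed behaviour at $p=\infty$) also coincide. Here I would exploit symmetry to cut the labour. Since $L(-p)=-L(p)$ with $R(-p)=R(p)$, and $L(1/p)=L(p)$ with $R(1/p)=-R(p)$, while $A(-p)$ and $A(1/p)$ are each constant multiples of~$A(p)$, both $\mathcal{E}_L$ and $\mathcal{E}_R$ are invariant under $p\mapsto-p$ and $p\mapsto1/p$ (Legendre's equation being invariant under $z\mapsto-z$). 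These two involutions generate a Klein four-group whose orbits on the singular set are $\{1,-1\}$, $\{0,\infty\}$ and $\{i,-i\}$, so the accessory residues are constant along orbits and only a representative from each need actually be computed from the two lifts and matched.

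Finally, with $\mathcal{E}_L=\mathcal{E}_R$ in hand, I would identify the relevant Frobenius solutions on $p\in(1,\infty)$, where $L$ runs from $1$ to $\infty$ and $R$ from $\infty$ to $1$. For the first listed identity, the defining singular point of $2P_{2\alpha-1/2}^{-\alpha}(L)$ is $L=1$ (that is, $p=1$) and that of $(2/\pi)\widehat Q_{2\alpha-1/2}^{-\alpha}(R)$ is $R=\infty$ (again $p=1$); both $u(L(p))$ and $A(p)v(R(p))$ are therefore the Frobenius solution of $\mathcal{E}_L=\mathcal{E}_R$ belonging to the exponent $2\alpha$ at $p=1$, and hence are proportional. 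To fix the constant as unity I would match leading coefficients as $p\to1$, using $L-1\sim(p-1)^4/8$ and $1/R\sim2(p-1)$ together with the asymptotics~(\ref{eq:asympP}) and~(\ref{eq:asympQhat}); the gamma-function factors then collapse to~$1$ by the Legendre duplication formula $\Gamma(2\alpha+1)=\pi^{-1/2}2^{2\alpha}\Gamma(\alpha+\tfrac12)\Gamma(\alpha+1)$, exactly as in the proof of Theorem~\ref{thm:m}. The second listed identity is anchored instead at $p=\infty$ and is handled in the same fashion; alternatively it follows from the first by the $L\leftrightarrow R$ symmetry $p\mapsto(p+1)/(p-1)$ of the curve.
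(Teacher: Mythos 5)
Your proposal is correct and follows essentially the same route as the paper's own proof: lifting Legendre's equation along both quartic covering maps, checking that the matched P-symbols (necessary but not sufficient, since $\mathcal{W}_4$ has six singular points and three accessory parameters) are supplemented by an explicit term-by-term comparison of $\mathcal{E}_L$ and $\mathcal{E}_R$, and then identifying Frobenius solutions at $p=1$ and $p=\infty$ on $(1,\infty)$ and fixing the constants via the asymptotics (\ref{eq:asympP}), (\ref{eq:asympQhat}) and the duplication formula. Your Klein four-group symmetry argument for cutting down the accessory-parameter check, and the derivation of $W_4(\overline{i})$ from $W_4(i)$ via the involution $p\mapsto(p+1)/(p-1)$, are valid refinements of, not departures from, that method; only the final mechanical substitution $L=\coth\xi$ (giving $p=\coth(\xi/4)$, $A=\sqrt{\sinh(\xi/2)}$) is left implicit.
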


\begin{proof}[Proof of Theorem~\ref{thm:w4}]
This closely resembles the proof of Theorem~\ref{thm:w2}.  The inverse
image of the set of singular points $\{1,-1,\infty\}$ under either $L$
or~$R$ is now the subset $\{1,-1,\infty,0,{\rm i},-{\rm i}\}$ of the
$p$-sphere, which comprises the vertices of a regular octahedron, and the
left and right P-symbols both turn~out to be
\begin{equation}
  \left\{\!
  \begin{array}{cccccc|c}
    1 & -1 & \infty & 0 & \mathrm{i} & -\mathrm{i} & p \\
    \hline
    -2\alpha & -2\alpha & -2\alpha+1/2 & -2\alpha+1/2 & -2\alpha+1/2 & -2\alpha+1/2 & \\
    2\alpha & 2\alpha & 2\alpha+1/2 & 2\alpha+1/2 &2\alpha+1/2 &2\alpha+1/2 &
  \end{array}
  \!\right\}.
\end{equation}
(It is an easy exercise to verify that if the order~$-\alpha$ were replaced
by~$-\beta$, as in Theorem~\ref{thm:w2}, the two P-symbols would differ;
which is why Theorem~\ref{thm:w4} has only one free parameter,
namely~$\alpha$.)  As before, the lifted equations
$\mathcal{E}_L,\mathcal{E}_R$ do not merely have the same P-symbol: they
are both
\begin{equation}
  \frac{{\rm d}^2\tilde u}{{\rm d}p^2}
  +\frac{2p}{p^2-1}   \frac{{\rm d}\tilde u}{{\rm d}p}
  +\left[
    \frac{(1-16\alpha^2)(p^2-1)^2}{4p^2(p^2+1)^2}
        -
        \frac{16\alpha^2}{(p^2-1)^2}
    \right]\tilde u
  = 0,
\end{equation}
by a separate calculation.  The remainder of the proof is similar; in fact
the table~(\ref{eq:tablefour}), showing how each of the four $p$-intervals
$(-\infty,-1)$, $(-1,0)$, $(0,1)$, $(1,+\infty)$ is mapped monotonically
onto a real $L$-interval and a real $R$-interval, is valid without change.
\end{proof}

\begin{remark*}
  One may wonder how the curve $\mathcal{W}_4$ was found, or equivalently
  the quartic covering maps $p\mapsto L(p),R(p)$.  In fact the identities
  of type~$W_4$ were found first, and the curve was engineered to provide a
  proof.  As the reader can verify, they follow from homographic identities
  (of type~$M$, above) by applying Whipple's transformation formula to both
  sides.

  Many (associated) Legendre functions of half-odd-integer degree and
  order, including ${\rm P}_{1/2}^{-1/2},\allowbreak{\rm
    P}_{5/2}^{-3/2},\allowbreak{\rm P}_{9/2}^{-5/2}$, have been tabulated
  for use in quantum mechanics~\cite{Hunter99}.  Up~to phase factors
  (see~(\ref{eq:ferrersp})), these are the same as
  ${P}_{1/2}^{-1/2},{P}_{5/2}^{-3/2},{P}_{9/2}^{-5/2}$.  By applying the
  identity $W_4(\overline i)$, one can easily compute
  $\widehat{Q}_{1/2}^{-1/2},{\widehat
    Q}_{5/2}^{-3/2},\widehat{Q}_{9/2}^{-5/2}$, which have not previously
  been tabulated.
\end{remark*}

\section{Derivation of main results}
\label{sec:derivation}

The two dozen identities in Theorems \ref{thm:i4}--\ref{thm:i3p}, arising
from algebraic curves $\mathcal{C}_r,\mathcal{C}_r'$ with $r=3,4,6$, are
proved by the technique developed in~\S\,\ref{sec:suppl}.  The key fact is
that in the four identities of each theorem, which come from a single
curve, the left and right functions $u(L(p))$, $A(p)v(R(p))$ satisfy the
same second-order differential equation, as functions of the parametrizing
variable~$p$.  This equality (i.e., $\mathcal{E}_L=\mathcal{E}_R$) is
consistent with $\mathcal{E}_L,\mathcal{E}_R$ having the same P-symbol,
which can be checked `on~the back of an envelope'; but due to each of these
lifted equations having more than three singular points on the $p$-sphere,
for full rigor they must be worked~out explicitly, and compared.

Once $\mathcal{E}_L=\mathcal{E}_R$ has been verified, the covering maps
$p\mapsto L(p),R(p)$ determine the associated identities: in particular,
which of the Legendre functions $P,\widehat Q$ or Ferrers functions ${\rm
  P},\xbar{\rm P}$ can appear as~$u,v$.  The algorithm for determining the
possible~$u,v$ was illustrated in~\S\,\ref{sec:suppl}.  For each real
$p$-interval delimited by real singular points, one checks whether the
$L$-range or $R$-range is~$(-\infty,-1)$; if so, the $p$-interval is
rejected.  An $L$-range or $R$-range that is $(-1,1)$, or a subset of~it,
corresponds to a Ferrers function, and similarly, $(1,\infty)$ corresponds
to a Legendre function.  For an identity to exist, the left and right
functions must be the same Frobenius solution, which means that their
defining singular points ($1$~for $P$ and~${\rm P}$, $-1$~for~$\xbar{\rm
  P}$ and $\infty$ for~$\widehat Q$) must appear at the same end of the
$p$-interval.  Any constant of proportionality needed between the two sides
is calculated by considering their asymptotic behavior at this singular
point (see~\S\,\ref{sec:asymptotics}).

The preceding algorithm suffices to derive or verify all the identities
of~\S\,\ref{sec:mainresults}, except for $I_r(\overline{i})$, $r=3,4,6$.
Anomalously, these relate $\widetilde P,{\rm Q}$, and a sketch of how they
are derived is deferred.  The data below on each curve (the singular points
of $\mathcal{E}_L=\mathcal{E}_R$, the characteristic exponent data, and the
table of monotone maps from $p$-intervals to $L,R$-intervals) should
suffice for the interested reader to confirm all identities other than
these.  It is exponent \emph{differences} that are supplied below, since
unlike exponent pairs they are unaffected by the replacement of
$v(R(p))$ by $A(p)v(R(p))$.

\smallskip
\emph{Signature-$4$ identities.}---On the curve~$\mathcal{C}_4$ viewed as
the $p$-sphere, the equation $\mathcal{E}_L=\mathcal{E}_R$ has singular
points $p=-1,0,1$.  The respective exponent differences are
$\alpha,2\alpha,\alpha$.  It also has a `removable' singular point at
$p=\infty$, at which the exponent difference is unity, but no Frobenius
solution behaves logarithmically.  The equation is
\begin{equation}
\frac{{\rm d}^2\tilde u}{{\rm d}p^2}
+
\left(\frac1{p+1} + \frac1p + \frac1{p-1}
\right)
\frac{{\rm d}\tilde u}{{\rm d}p}
+
\left[
\frac3{4(p^2-1)}
-\frac{\alpha^2}{p^2(p^2-1)^2}
\right]\tilde u
=0,
\end{equation}
by direct computation.  The real $p$-intervals and monotonic $p\mapsto L,R$
maps are tabulated as
\begin{equation}
\begin{tabular}{c||cccccc}
$p$ & $-\infty$ & $-1$ & $0$ & $1$ & $+\infty$ \\
\hline
$L(p)$ & $+\infty^*$ & $1$ & $-1^*$ & $1$ & $+\infty^*$ \\
$R(p)$ &$0$  & $-1$ & $-\infty$/$+\infty$ & $1$ & $0$.
\end{tabular}
\end{equation}
The $p$-interval $(1,+\infty)$ yields an identity: because $1<p<+\infty$
corresponds to $1<L<+\infty$ and ${1>R>0}$, its left and right functions
are $P,{\rm P}$.  This is identity $I_4(i)$ of Theorem~\ref{thm:i4}.  The
defining singular points of~$P,{\rm P}$ (respectively $L=1$, $R=1$) are
at~$p=1$, i.e., are at the same end, and the prefactor~$2^\alpha$ in the
theorem comes from requiring the two sides to agree at~$p=1$.  In the same
way, the $p$-interval $(0,1)$ yields both $I_4(ii)$ and
$I_4(\overline{ii})$, which respectively relate ${\rm P},P$ and~$\xbar{\rm
  P},\widehat Q$.  Their defining points are at $p=1$ and~$p=0$.  The
$p$-intervals $(-\infty,-1)$ and $(-1,0)$ also yield identities, but they
are related to the ones just found by $R\mapsto-R$, which is performed
by~$p\mapsto-p$.

On the curve~$\mathcal{C}_4'$, the equation $\mathcal{E}_L=\mathcal{E}_R$
has singular points $p=0,1,\infty$.  The respective exponent differences
are $0,0,0$.  It also has a removable singular point at $p=-1$.  The
equation is
\begin{equation}
\frac{{\rm d}^2\tilde u}{{\rm d}p^2}
+
\left(-\,\frac1{p+1} + \frac1p + \frac1{p-1}
\right)
\frac{{\rm d}\tilde u}{{\rm d}p}
+
\frac34\left[
\frac1{(p+1)^2} + \frac1{(p+1)}  - \frac1p
\right]\tilde u
=0,
\end{equation}
by direct computation.  (It is readily verified that if the left and right
order parameters equalled~$-\alpha$, as in the $I_4$ identities, then
$\mathcal{E}_L=\mathcal{E}_R$ only if $\alpha=0$; which is why
Theorem~\ref{thm:i4p} includes no free~$\alpha$ parameter.)  The real
$p$-intervals and monotonic $p\mapsto L,R$ maps are tabulated as
\begin{equation}
\begin{tabular}{c||cccccc}
$p$ & $-\infty$ & $-1$ & $0$ & $1$ & $+\infty$ \\
\hline
$L(p)$ & $-1$ & $-\infty^*$ & $-1$ & $1^*$ & $-1$ \\
$R(p)$ &$-1$  & $-3$ & $-\infty$/$+\infty$ & $1$ & $-1$.
\end{tabular}
\end{equation}
The $p$-interval $(1,+\infty)$ yields identities $I_4'(i)$ and
$I_4'(\overline{i})$ of Theorem~\ref{thm:i4p}, which respectively relate
${\rm P},{\rm P}$ and $\xbar{\rm P},\xbar{\rm P}$, and the $p$-interval
$(0,1)$ yields both $I_4'(ii)$ and $I_4'(\overline{ii})$, relating ${\rm
  P},{P}$ and $\xbar{\rm P},\widehat Q$.

\smallskip
\emph{Signature-$6$ identities.}---On the curve~$\mathcal{C}_6$ viewed as
the $p$-sphere, the equation $\mathcal{E}_L=\mathcal{E}_R$ has singular
points $p=-3,-1,0,1,3,\infty$, which are the vertices of a regular hexagon.
The respective exponent differences are
$\alpha,\alpha,4\alpha,\alpha,\alpha,4\alpha$.  It also has `apparent'
singular points at $p=\pm\sqrt3$, at each of which the exponent difference
is a non-zero integer other than unity (namely,~$2$), but no Frobenius
solution behaves logarithmically.  The equation is
\begin{multline}
\frac{{\rm d}^2\tilde u}{{\rm d}p^2}
+
\left(
\frac1{p+3} + \frac1{p+1} + \frac1{p} + \frac1{p-1} + \frac1{p-3} -
\frac{4p}{p^2-3}
\right)
\frac{{\rm d}\tilde u}{{\rm d}p}
\\
{}-
\left[
\frac{60\,p^2}{(p^2-1)(p^2-9)(p^2-3)^2}
+
\frac{4\alpha^2\,(p^2-3)^4}{p^2(p^2-1)^2(p^2-9)^2}
\right]\tilde u
=0,
\end{multline}
by direct computation.
The real $p$-intervals and monotonic $p\mapsto L,R$
maps are tabulated as
\begin{equation}
\begin{tabular}{c||cccccccccc}
$p$ & $-\infty$ & $-3$ & $-\sqrt3$ & $-1$ & $0$ & $1$ & $\sqrt3$ & $3$ & $+\infty$ \\
\hline
$L(p)$ & $1^*$ & $-1$ & $-\infty$/$+\infty$ & $1$ & $-1^*$ & $1$ &
$+\infty$/$-\infty$ & $-1$ & $1^*$
\\
$R(p)$ & $-\infty$ & $-1$ & $-\sqrt3/2^*$ & $-1$ & $-\infty$/$+\infty$ &
$1$ & $\sqrt3/2^*$ & $1$ & $+\infty$.
\end{tabular}
\end{equation}
The $p$-interval $(1,\sqrt3)$ yields identity $I_6(i)$ of
Theorem~\ref{thm:i6}, relating $P,{\rm P}$, and the $p$-interval $(0,1)$
yields both $I_6(ii)$ and $I_6(\overline{ii})$, which respectively relate
${\rm P},{P}$ and $\xbar{\rm P},\widehat Q$.  The $p$-intervals
$(-\sqrt3,-1)$, $(3,+\infty)$ also yield identities, but they are related
to the ones just found by $L\mapsto -L$ and $R\mapsto-R$, which are
performed by $p\mapsto3/p$ and~$p\mapsto-p$.

On the curve~$\mathcal{C}_6'$, the equation $\mathcal{E}_L=\mathcal{E}_R$
has singular points $p=-3,-1,0,1,3,\infty$.  The respective exponent
differences are $2\alpha,2\alpha,2\alpha,2\alpha,2\alpha,2\alpha$.  It also
has apparent singular points at $p=\pm\sqrt3\,{\rm i}$, with exponent
difference~$2$.  The equation is
\begin{multline}
\frac{{\rm d}^2\tilde u}{{\rm d}p^2}
+
\left(
\frac1{p+3} + \frac1{p+1} + \frac1{p} + \frac1{p-1} + \frac1{p-3} -
\frac{4p}{p^2+3}
\right)
\frac{{\rm d}\tilde u}{{\rm d}p}
\\
{}-
\left[
\frac{15}{(p^2+3)^2}
+
\frac{\alpha^2\,(p^2+3)^4}{p^2(p^2-1)^2(p^2-9)^2}
\right]\tilde u
=0,
\end{multline}
by direct computation.
The real $p$-intervals and monotonic $p\mapsto L,R$
maps are tabulated as
\begin{equation}
\begin{tabular}{c||cccccccc}
$p$ & $-\infty$ & $-3$ & $-1$ & $0$ & $1$ & $3$ & $+\infty$ \\
\hline
$L(p)$ & $1^*$ & $-1^*$ & $1^*$ & $-1^*$ & $1^*$ & $-1^*$ & $1^*$
\\
$R(p)$ & $+\infty$ & $1$ & $-1$ & $-\infty$/$+\infty$ & $1$ & $-1$ & $-\infty$.
\end{tabular}
\end{equation}
The $p$-interval $(1,3)$ yields identities $I_6'(i)$ and
$I_6'(\overline{i})$ of Theorem~\ref{thm:i6p}, which respectively relate
${\rm P},{\rm P}$ and $\xbar{\rm P},\xbar{\rm P}$, and the $p$-interval
$(0,1)$ yields both $I_6'(ii)$ and $I_6'(\overline{ii})$, relating ${\rm
  P},{P}$ and $\xbar{\rm P},\widehat Q$.  The $p$-intervals $(-3,-1)$,
$(-1,0)$ also yield identities, but they are related to the ones just found
by $L\mapsto -L$ and $R\mapsto-R$, which are performed by $p\mapsto-3/p$
and~$p\mapsto-p$.

\smallskip
\emph{Signature-$3$ identities.}---On the curve~$\mathcal{C}_3$ viewed as
the $p$-sphere, the equation $\mathcal{E}_L=\mathcal{E}_R$ has singular
points $p=-3,-1,0,1,3,\infty$.  The respective exponent differences are
$0,0,0,0,0,0$.  It also has removable singular points at $p=\pm\sqrt3$.
The equation is
\begin{multline}
\frac{{\rm d}^2\tilde u}{{\rm d}p^2}
+
\left(
\frac1{p+3} + \frac1{p+1} + \frac1{p} + \frac1{p-1} + \frac1{p-3} -
\frac{4p}{p^2-3}
\right)
\frac{{\rm d}\tilde u}{{\rm d}p}
\\
{}-
\left[
\frac{96\,p^2}{(p^2-1)(p^2-9)(p^2-3)^2}
\right]\tilde u
=0,
\end{multline}
by direct computation.  (As with $\mathcal{C}_4'$ above, there is no
evident generalization to non-zero order~$-\alpha$.)  The real
$p$-intervals and monotonic $p\mapsto L,R$ maps are tabulated as
\begin{equation}
\begin{tabular}{c||cccccccccc}
$p$ & $-\infty$ & $-3$ & $-\sqrt3$ & $-1$ & $0$ & $1$ & $\sqrt3$ & $3$ & $+\infty$ \\
\hline
$L(p)$ & $1^*$ & $-1$ & $-\infty$/$+\infty$ & $1$ & $-1^*$ & $1$ &
$+\infty$/$-\infty$ & $-1$ & $1^*$
\\
$R(p)$ & $+\infty$ & $1$ & $\sqrt3/2$ & $-1$ & $-\infty$/$+\infty$ &
$1$ & $-\sqrt3/2$ & $-1$ & $-\infty$.
\end{tabular}
\end{equation}
The $p$-interval $(1,\sqrt3)$ yields $I_3(i)$, relating $P,{\rm P}$, and
the $p$-interval $(0,1)$ yields both $I_3(ii)$ and $I_3(\overline{ii})$,
which respectively relate ${\rm P},{P}$ and $\xbar{\rm P},\widehat Q$.  The
$p$-intervals $(-\sqrt3,-1)$, $(-1,0)$ also yield identities, but they are
related to the ones just found by $R\mapsto-R$, which is performed
by~$p\mapsto-p$.

On the curve~$\mathcal{C}_3'$, the equation $\mathcal{E}_L=\mathcal{E}_R$
has singular points $p=-3,-1,0,1,3,\infty$.  The respective exponent
differences are $0,0,0,0,0,0$.  It also has removable singular points at
$p=\pm\sqrt3\,{\rm i}$.  By direct computation, the equation is
\begin{equation}
\frac{{\rm d}^2\tilde u}{{\rm d}p^2}
+
\left(
\frac1{p+3} + \frac1{p+1} + \frac1{p} + \frac1{p-1} + \frac1{p-3} -
\frac{4p}{p^2+3}
\right)
\frac{{\rm d}\tilde u}{{\rm d}p}
{}-
\left[
\frac{24}{(p^2+3)^2}
\right]\tilde u
=0.
\end{equation}
(As with $\mathcal{C}_3$, there is no evident generalization to non-zero
order~$-\alpha$.)  The table of real $p$-intervals and monotone $p\mapsto
L,R$ maps is the same as for~$\mathcal{C}_6'$, and the derivation of
identities is similar.

\smallskip
\emph{Finer asymptotics.}---It has now been explained how each identity in
Theorems \ref{thm:i4}--\ref{thm:i3p} is derived, except for
$I_r(\overline{i})$, $r=3,4,6$.  Each of these relates a $\widetilde P,{\rm
  Q}$, i.e., relates an \emph{ad~hoc} Legendre function on the left
(a~linear combination of~$P,\widehat Q$) to a Ferrers function of the
second kind, on the right.  Any identity $I_r(\overline{i})$ is anomalous
because, as the tables in Theorems \ref{thm:i4},~\ref{thm:i6}
and~\ref{thm:i3} show, its $R$-interval, over which the Ferrers argument
ranges, does not extend the entire way from $R=1$ to~$R=-1$.  This is why
the above proof technique, applied to this $R$-range and the corresponding
$p$-interval, produced only one identity (i.e., $I_r(i)$), which came by
requiring identical left and right asymptotics at the $R=1$ end: at the
singular point $p=1$.  The local behavior at the other end, which is not a
singular point, is not given by any simple formula.

This difficulty can be worked around by focusing on the $p=1$ end of the
relevant $p$-interval (which is $(1,\infty)$, $(1,3)$, $(1,\sqrt3)$ for
$r=4,6,3$), but employing finer asymptotic approximations.  The leading
behaviors of $P_\nu^{-\mu}(z),\allowbreak{\rm P}_\nu^{-\mu}(z)$ as~$z\to1$
are given in (\ref{eq:asympP}),(\ref{eq:asympPFerrers}).  Those of
$\widehat Q_\nu^{\mu}(z),\allowbreak{\rm Q}_\nu^{\mu}(z)$ as~$z\to1$ are
more difficult to compute.  (The point $z=1$ is not the defining singular
point for~$\widehat Q_\nu^\mu$, and the Ferrers function~${\rm Q}_\nu^\mu$
is not a Frobenius solution at any singular point.)  But one can exploit
the representation of~$\widehat Q_\nu^\mu$ as a combination
of~$P_\nu^\mu,P_\nu^{-\mu}$ \cite[3.3(10)]{Erdelyi53}, and that of~${\rm
  Q}_\nu^\mu$ as a combination of~${\rm P}_\nu^\mu,{\rm P}_\nu^{-\mu}$
\cite[3.4(14)]{Erdelyi53}.  One finds that if $\mu$~is not an integer and
$\nu\pm\mu$ are not negative integers,
\begin{equation}
\label{eq:512}
  \frac{(2/\pi)\sin(\mu\pi)}{\Gamma(\nu+\mu+1)}\widehat Q_\nu^\mu(z)\sim
  \frac{\left[(z-1)/2\right]^{-\mu/2}}{\Gamma(1-\mu)\Gamma(\nu+\mu+1)}
  -
  \frac{\left[(z-1)/2\right]^{\mu/2}}{\Gamma(1+\mu)\Gamma(\nu-\mu+1)}
  ,\qquad z\to1,
\end{equation}
and a similar statement holds with $\widehat Q_\nu^\mu$ and $z-1$ replaced
by ${\rm Q}_\nu^\mu$ and~$1-z$, if the first term on the right is
multiplied by~$\cos(\mu\pi)$.  Such asymptotic statements must be
interpreted with care: the two terms are the leading terms of distinct
Frobenius series, from exponents $-\mu/2,+\mu/2$.

It is easily checked that if in Theorems \ref{thm:i4},~\ref{thm:i6}
and~\ref{thm:i3}, the right function~$v$ equals $2/\pi$ times the specified
Ferrers function~${\rm Q}$, and the left function~$u$ equals $\csc(\pi/r)$
times the specified function~$\widetilde P$, the two sides of the identity
$I_r(\overline{i})$ will have the same \emph{fine} asymptotics at~$p=1$:
the coefficients of each of the two Frobenius solutions will be in
agreement.  In fact, it was to obtain this agreement that the \emph{ad~hoc}
Legendre function~$\widetilde P$ was defined in~(\ref{eq:widetildeP}) as it
was, as a certain combination of~$P,\widehat Q$.

In deriving identity $I_3(\overline{i})$ of Theorem~\ref{thm:i3}, a
modified approach is needed.  This identity relates $\widetilde P_{-1/3}$
to~${\rm Q}_{-1/2}$, with both functions of order zero (there is no
$\alpha$~parameter).  In the asymptotic development of $\widehat
Q_{-1/3}(z),\allowbreak {\rm Q}_{-1/2}(z)$ as~$z\to1$, the Frobenius
solutions $(z-\nobreak1)^{-\mu/2},\allowbreak (z-\nobreak1)^{\mu/2}$
of~(\ref{eq:512}) are replaced by $1,\ln(z-\nobreak1)$; see
\cite[\S\,3.9.2]{Erdelyi53}.  The modifications are straightforward.

\section{Elliptic integral representations}
\label{sec:ellreps}

The now-proved identities of section~\ref{sec:mainresults}, joined with
differential recurrences for Legendre and Ferrers functions, lead to useful
representations in terms of the first and second complete elliptic
integrals, $K=K({\rm m})$ and $E=E({\rm m})$, the argument ${\rm
  m}$~denoting the elliptic modular parameter.

\begin{theorem}
  The Legendre functions\/ $P_\nu^m(\cosh\xi),\widehat Q_\nu^m(\cosh\xi)$
  and Ferrers functions\/ ${\rm P}_\nu^m(\cos\theta),\allowbreak {\rm
    Q}_\nu^m(\cos\theta)$, where the degree\/ $\nu$ differs by\/ $\pm1/r$
  {\rm(}$r=2,3,4,6${\rm)} from an integer and the order\/ $m$ is an
  integer, can be expressed in closed form in terms of the complete
  elliptic integrals\/ $K,E$.
\end{theorem}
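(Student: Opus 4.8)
The plan is to reduce the whole statement to the classical case $r=2$ — half-odd-integer degree, integer order — and then to climb outward by recurrences. Two closure facts underpin everything. First, the three-term recurrence in the degree, $(\nu-\mu+1)P_{\nu+1}^\mu(z)=(2\nu+1)zP_\nu^\mu(z)-(\nu+\mu)P_{\nu-1}^\mu(z)$ and its analogues for the other three functions, is \emph{purely algebraic}: it shifts $\nu$ by integers with coefficients rational in $z$. Second, the family of functions of the form $a\,K+b\,E$, with $a,b$ algebraic in the variable and the modular parameter~${\rm m}$ itself an algebraic function of that variable, is closed under differentiation, because ${\rm d}K/{\rm d}{\rm m}$ and ${\rm d}E/{\rm d}{\rm m}$ are rational in ${\rm m},K,E$. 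I shall call such expressions \emph{closed form}.

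I would first dispose of the base case $r=2$, which already accounts for all degrees differing from an integer by $\pm\tfrac12$. The order-zero seeds are classical: ${\rm P}_{-1/2}(z)=(2/\pi)K((1-z)/2)$ as noted in the Introduction, together with ${\rm P}_{1/2}$ and the second-kind ${\rm Q}_{-1/2},{\rm Q}_{1/2}$ (and the Legendre counterparts $P_{\pm1/2},\widehat Q_{\pm1/2}$), each being $a\,K+b\,E$ for suitable algebraic $a,b$ and algebraic modulus. From two consecutive seeds the algebraic degree recurrence reaches every half-odd-integer degree while remaining closed form; integer order is then produced by the operator $(z^2-1)^{m/2}\,{\rm d}^m/{\rm d}z^m$ recalled in~\S\ref{sec:asymptotics} (valid for $P$ and $\widehat Q$, and extending to arbitrary degree at integer $m\ge0$), which preserves closed form by the differentiation-closure above, with negative orders supplied by the order-reflection formula. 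This settles $r=2$ and, crucially, furnishes closed-form values for every half-odd-integer-degree, integer-order function appearing on the right-hand sides of Theorems \ref{thm:i4}--\ref{thm:i3p}.

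Next I would treat degree exactly $-1/r$ for $r=3,4,6$. When $r=4$ or~$6$, setting $-\alpha=m$ in Theorem~\ref{thm:i4},~\ref{thm:i6} or~\ref{thm:i6p} rewrites the order-$m$ left-hand function as a numerical constant times an algebraic prefactor times a function of \emph{half-odd-integer} degree ($-m-\tfrac12$ or $-2m-\tfrac12$) and \emph{integer} order ($m$ or~$2m$), evaluated at an argument algebraic in $\cosh\xi$ or $\cos\theta$; by the base case this is closed form — at a modulus algebraic in the original variable but still complete — so the composite is closed form. When $r=3$, where $I_3,I_3'$ are available only at order zero, one first obtains ${\rm P}_{-1/3},P_{-1/3}$ in closed form in the same manner and then raises to integer order by the operator $(z^2-1)^{m/2}\,{\rm d}^m/{\rm d}z^m$ (with the Ferrers analogue for ${\rm P}$), which again preserves closed form. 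The second-kind functions ${\rm Q}_{-1/r}^m,\widehat Q_{-1/r}^m$ are recovered by pairing an unbarred identity with its barred companion and inverting the $2\times2$ system coming from the defining relations~(\ref{eq:barP}),(\ref{eq:widetildeP}) for $\xbar{\rm P}$ and $\widetilde P$; its determinant is a nonzero multiple of $\sin[(\nu+\mu)\pi]$, which does not vanish at fractional degree, and the Ferrers$\leftrightarrow$Legendre passage uses~(\ref{eq:ferrers}).

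Finally I would remove the restriction to degree exactly $\pm1/r$. A degree differing from an integer by $+1/r$ is carried to one differing by $-1/r$ through the reflection invariance $\nu\mapsto-1-\nu$ of~(\ref{eq:ode}) (which identifies $P_\nu^m$ with $P_{-1-\nu}^m$ and supplies the companion connection formulas for the second kind), after which any integer adjustment of the degree is made by the algebraic three-term recurrence, each step preserving closed form. The main obstacle is not any individual reduction but the bookkeeping that certifies closure end-to-end: one must verify that every recurrence step and every order-raising differentiation keeps the coefficients algebraic and never leaks out of the $\{K,E\}$-span — equivalently, that no incomplete elliptic integral or new transcendental is ever introduced, the only genuinely new modulus being the complementary one $K(1-{\rm m})=K'$ that enters the second-kind seeds and is harmless because $1-{\rm m}$ is algebraic. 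Granting the two closure facts, the theorem follows by composing them with the fractional-degree identities of~\S\ref{sec:mainresults}.
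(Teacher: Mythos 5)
Your outline is close to the paper's own proof, and your device for $r=4,6$ of setting $-\alpha=m$ in Theorems \ref{thm:i4} and~\ref{thm:i6} so as to land directly on half-odd-integer degree and integer order is precisely the ``enhancement'' the paper notes after its proof; likewise your recovery of the second-kind functions by inverting (\ref{eq:barP}),(\ref{eq:widetildeP}), with determinant a nonzero multiple of $\sin[(\nu+\mu)\pi]$, is correct. But there is a genuine gap in your last step, where you assert that ``any integer adjustment of the degree is made by the algebraic three-term recurrence.'' A three-term recurrence in the degree, $(\nu-\mu+1)P_{\nu+1}^\mu=(2\nu+1)zP_\nu^\mu-(\nu+\mu)P_{\nu-1}^\mu$, can only be run once closed forms are known at \emph{two consecutive} degrees. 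In the base case $r=2$ you correctly supply two seeds, $\nu=-\tfrac12$ and $\nu=+\tfrac12$. In each fractional class $\mathbb{Z}\pm1/r$ ($r=3,4,6$), however, your construction produces exactly \emph{one} degree value: every identity of \S\,\ref{sec:mainresults} has degree exactly $-1/r$ on its left side, and the reflection $\nu\mapsto-1-\nu$ sends $-1/r$ to $-1+1/r$, which lies in the \emph{other} residue class modulo~$\mathbb{Z}$, not one integer step away in the same class. So the recurrence you invoke can never be initialized, and the degrees $n-1/r$ with $n\neq0$ --- which are the bulk of the theorem's claim --- are not reached by your argument as written.

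The repair is small but requires a different tool, and it is the one the paper actually uses: first-order \emph{differential} recurrences (ladder operators) in the degree, which need only a single seed. With $M_\pm=-(\sinh\xi)D_\xi-\bigl[\tfrac12\pm(\nu+\tfrac12)\bigr]\cosh\xi$ one has $M_\pm F_\nu^\mu=\bigl[\mp(\nu+\tfrac12)+(\mu-\tfrac12)\bigr]F_{\nu\pm1}^\mu$, and the coefficient on the right never vanishes for $\nu\in\mathbb{Z}\pm1/r$, $\mu\in\mathbb{Z}$, so each step is invertible; your own second closure fact (differentiation stays in the algebraic $\{K,E\}$-span) then guarantees that every step preserves closed form. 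Alternatively, you could stay purely algebraic by adding one mixed degree--order contiguous relation to your toolkit, e.g.\ $(1-x^2)^{1/2}\,{\rm P}_\nu^{\mu+1}(x)=(\nu-\mu)\,x\,{\rm P}_\nu^{\mu}(x)-(\nu+\mu)\,{\rm P}_{\nu-1}^{\mu}(x)$: since you possess all integer orders at degree $-1/r$, this yields degree $-1/r-1$ at all integer orders, after which your three-term recurrence does run. With either substitution your proof goes through and is then essentially the paper's argument, differing only in that you apply the \S\,\ref{sec:mainresults} identities at integer order (for $r=4,6$) where the paper first lowers to order zero.
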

\begin{proof}
  The case $r=2$ is well-known (the Legendre functions of half-odd-integer
  degree and integer order are the classical toroidal functions).  The
  fundamental representations are
\begin{subequations}
  \begin{alignat}{2}
  \label{eq:fundrepsa}
    P_{-1/2}(\cosh\xi)&=(2/\pi)\,{\rm sech}(\xi/2)K(\tanh^2(\xi/2)), \qquad& \widehat Q_{-1/2}(\cosh\xi)&= 2\,{\rm e}^{-\xi/2}K({\rm e}^{-2\xi}),
  \\
    {\rm P}_{-1/2}(\cos\theta)&=(2/\pi)K(\sin^2(\theta/2)), \qquad & {\rm Q}_{-1/2}(\cos\theta)&= K(\cos^2(\theta/2)),
  \label{eq:fundrepsb}
  \end{alignat}
\end{subequations}
  and more general $\nu,m$ are handled by applying standard differential
  recurrences on the degree and order.  Let $F_\nu^\mu$ denote
  $P_\nu^m(\cosh\xi)$ or~$\widehat Q_\nu^m(\cosh\xi)$, and ${\rm
    F}_\nu^\mu$ denote ${\rm P}_\nu^m(\cos\theta)$ or~${\rm
    Q}_\nu^m(\cos\theta)$.  
  The order recurrences are
  \begin{subequations}
    \begin{align}
      M^\pm F_\nu^\mu &= s\, C^\pm\, F_\nu^{\mu\pm1},\\
      {\rm M}^\pm {\rm F}_\nu^\mu &= \pm C^\pm\, F_\nu^{\mu\pm1},
    \end{align}
  \end{subequations}
  where the Legendre and Ferrers `ladder' operators for the order,
  $M^{\pm}$ and~${\rm M}^{\pm}$, are defined (with $D_\xi = {\rm d}/{\rm
    d}\xi$ and $D_\theta = {\rm d}/{\rm d}\theta$) by
  \begin{subequations}
    \begin{align}
      M^\pm &= D_\xi \mp \mu \coth\xi,\\ {\rm M}^\pm &= D_\theta \mp \mu
      \cot\theta.
    \end{align}
  \end{subequations}
  The constant of proportionality $C^-$ equals
  $(\nu+\frac12)^2-(\mu-\frac12)^2$, $C^+$ equals unity and the sign
  factor~$s$ has the following meaning: $s=1,-1$ for $F=P,\widehat Q$.  The
  degree recurrences are
  \begin{subequations}
    \begin{align}
      M_{\pm} F_\nu^\mu &= \left[\mp(\nu+\tfrac12) + (\mu-\tfrac12)\right]\,F_{\nu\pm1}^\mu,\\
      {\rm M}_{\pm} {\rm F}_\nu^\mu &= \left[\mp(\nu+\tfrac12) +
        (\mu-\tfrac12)\right]\,{\rm F}_{\nu\pm1}^\mu,
    \end{align}
  \end{subequations}
  where the ladder operators for the degree, $M_{\pm}$ and~${\rm M}_{\pm}$,
  are given by
  \begin{subequations}
    \begin{align}
      M_\pm &=  -(\sinh\xi)D_\xi - \left[\tfrac12 \pm
        (\nu+\tfrac12)\right]\cosh\xi, \\
      {\rm M}_\pm &=  -(\sin\theta)D_\theta - \left[\tfrac12 \pm
        (\nu+\tfrac12)\right]\cos\theta.
    \end{align}
  \end{subequations}
  By applying these recurrences to any of $P_\nu^m(\cosh\xi),\widehat
  Q_\nu^m(\cosh\xi)$ or ${\rm P}_\nu^m(\cos\theta),{\rm
    Q}_\nu^m(\cos\theta)$, where $\nu$~is a half-odd-integer, one can
  express it in~terms of the corresponding $P_{-1/2}(\cosh\xi),\widehat
  Q_{-1/2}(\cosh\xi)$ or ${\rm P}_{-1/2}(\cos\theta),{\rm
    Q}_{-1/2}(\cos\theta)$, and its derivatives.  By then exploiting
  formulas (\ref{eq:fundrepsa}), (\ref{eq:fundrepsb}) and the known
  differentiation formulas for $K=K({\rm m})$ and $E=E({\rm m})$, which are
  \begin{equation}
    \frac{{\rm d}}{{\rm d}{\rm m}} K = \frac{E-{\rm m}K}{2\,{\rm m}(1-{\rm m})},
    \qquad
    \frac{{\rm d}}{{\rm d}{\rm m}} E = \frac{E-K}{2\,{\rm m}},
  \end{equation}
  the desired representation is produced.

  The preceding algorithm is easily extended from $r=2$ (the classical
  case) to $r=3,4,6$.  Suppose one were given a Legendre function
  $P_\nu^m(\cosh\xi)$ or $\widehat Q_\nu^m(\cosh\xi)$ with
  $\nu\in\mathbb{Z}-1/r$ and~$m\in\mathbb{Z}$.  One would first apply the
  recurrences on the degree and order, to express it in~terms of
  $P_{-1/r}(\cosh\xi)$ or $\widehat Q_{-1/r}(\cosh\xi)$, and its
  derivatives.  The pair of signature-$r$ identities $I_r(i),\allowbreak
  I_r(\overline{i})$, which are found in Corollaries \ref{cor:i4},
  \ref{cor:i6},~\ref{cor:i3} for $r=4,6,3$ respectively, will express these
  Legendre functions in~terms of the Ferrers functions ${\rm P}_{-1/2},{\rm
    Q}_{-1/2}$.  (Recall that $\widetilde P$, which appears on the left
  of~$I_r(i)$, is a linear combination of~$P,\widehat Q$;
  see~(\ref{eq:widetildeP}).)  Thus the cases $r=3,4,6$ reduce to the
  classical case.

  If one were given a Ferrers function, one of ${\rm P}_\nu^m(\cos\theta)$
  or ${\rm Q}_\nu^m(\cos\theta)$ with $\nu\in\mathbb{Z}-1/r$ and
  ${m\in\mathbb{Z}}$, the reduction would be similar, but one of the other
  three pairs of signature-$r$ identities (say, the pair
  $I_r(ii),I_r(\overline{ii})$) would be used.  The identities in each of
  these pairs have ${\rm P},\xbar{\rm P}$ on their left sides; but since
  $\xbar{\rm P}$~is a combination of~${\rm P},{\rm Q}$
  (see~(\ref{eq:barP})), this is sufficient for reduction.

  The only thing that remains to be explained is how to handle the case
  when the degree~$\nu$ differs by~$+1/r$ rather than~$-1/r$ from an
  integer.  The additional effort required is minor.  The functions
  $P_\nu^\mu,{\rm P}_\nu^\mu$ are unaffected by the replacement of $\nu$
  by~$-\nu-1$, which interchanges the two cases; and for $\widehat
  Q_\nu^\mu,{\rm Q}_\nu^\mu$, applying the
  identities\cite[3.3(9)\ and\ 3.4(16)]{Erdelyi53}
  \begin{subequations}
  \begin{gather}
    \widehat Q_{-\nu-1}^\mu - \widehat Q_\nu^\mu = \cos(\nu\pi)\Gamma(\nu+\mu+1)\Gamma(\mu-\nu)\,P_\nu^{-\mu},\\
    \sin\left[(\nu-\mu)\pi\right]\,
    {\rm Q}_{-\nu-1}^\mu - 
    \sin\left[(\nu+\mu)\pi\right]\,
    {\rm Q}_\nu^\mu 
    =
    -\pi \cos(\nu\pi)\cos(\mu\pi)\,P_\nu^\mu
  \end{gather}
  \end{subequations}
  reduces either case to the other.
\end{proof}

The algorithm in this proof is not optimal when $r=4,6$.  For these two
values of~$r$, the Ferrers functions ${\rm P}_{-1/r}^m,\xbar{\rm
  P}_{-1/r}^m$ for any $m\in\mathbb{Z}$ can be reduced directly to the
toroidal functions $P_{-m-1/2}^m, \widehat Q_{-m-1/2}^m$,
resp.\ $P_{-2m-1/2}^m, \widehat Q_{-2m-1/2}^m$, by identities
$I_r(ii),I_r(\overline{ii})$, though there is no analogous reduction for
$r=3$ if $m$~is non-zero.  This enhancement for $r=4,6$ may be of numerical
relevance, since the recurrences for Legendre and Ferrers functions are
often numerically unstable, and modern schemes for evaluating toroidal
functions do not employ them~\cite{Gil2000}.

\section{Algebraic Legendre functions}
\label{sec:algebraic}
One of the identities of section~\ref{sec:mainresults}, the signature-$6$
identity $I_6'(\overline{ii})$ of Theorem~\ref{thm:i6p} and
Corollary~\ref{cor:i6p}, can be employed to generate closed-form
expressions for ${\rm P}_{-1/6}^{-1/4}$, ${P}_{-1/6}^{-1/4}$ and
$\widehat{Q}_{-1/4}^{-1/3}$.  These turn~out to be elementary
(specifically, algebraic) functions of their arguments, so the expressions
are conceptually as~well as practically simpler than the ones for
$P_\nu^m,\widehat Q_\nu^m,\allowbreak {\rm P}_\nu^m,\widehat {\rm Q}_\nu^m$
(with $\nu\in\mathbb{Z}\pm\nobreak1/2$) that were covered in the last
section.  No~elliptic integrals are involved.

The key fact is that while identity $I_6'(\overline{ii})$ transforms $\xbar
{\rm P}_{-1/6}^{-\alpha}$ to~$\widehat Q_{\alpha-1/2}^{-2\alpha}$, a
closed-form expression for~$\widehat Q_\nu^\mu$ (and also $P_\nu^\mu,{\rm
  P}_\nu^\mu$ and~${\rm Q}_\nu^\mu$) is available whenever the order~$\mu$
is a half-odd-integer.  This expression involves only elementary functions
\cite[\S\,14.5(iii)]{Olver2010}.  That any Legendre or Ferrers function
with (i)~$\mu\in\mathbb{Z}+1/2$, or (ii)~$\nu\in\mathbb{Z}$, can be
represented without quadratures is an important result~\cite{Poole30}.  In
this statement, cases (i) and~(ii) are related by Whipple's transformation.

\begin{theorem}
  The following formulas hold when\/ $\theta\in(0,\pi)$ and\/
  $\xi\in(0,\infty)$.
  \begin{align*}
    {\rm P}_{-1/6}^{-1/4}(\cos\theta) &= 3^{3/4}\Gamma(5/4)^{-1}\,(\sin\theta)^{-1/4}\left[\cos(\theta/3)-\sqrt{\frac{\sin\theta}{3\sin(\theta/3)}}\,\right]^{1/4},\\
    {P}_{-1/6}^{-1/4}(\cosh\xi) &= 3^{3/4}\Gamma(5/4)^{-1}\,(\sinh\xi)^{-1/4}\left[-\cosh(\xi/3)+\sqrt{\frac{\sinh\xi}{3\sinh(\xi/3)}}\,\right]^{1/4}.
  \end{align*}
  Moreover,
  \begin{displaymath}
    \widehat Q_{-1/4}^{-1/3}(\coth\xi) = 
C\,(\sinh\xi)^{-1/4}\left[-\cosh(\xi/3)+\sqrt{\frac{\sinh\xi}{3\sinh(\xi/3)}}\,\right]^{1/4},
  \end{displaymath}
  where\/ $C=3^{3/4}\sqrt{\pi/2}\,\Gamma(5/12)/\Gamma(5/4) =
  2^{3/4}3^{9/8}\Gamma(2/3)\sqrt{\sqrt3 - 1}$ is the constant prefactor.
\label{prop:71}
\end{theorem}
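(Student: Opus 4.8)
The plan is to exploit the fact, noted just above, that identity $I_6'(\overline{ii})$ of Theorem~\ref{thm:i6p} carries $\xbar{\rm P}_{-1/6}^{-\alpha}$ into $\widehat Q_{\alpha-1/2}^{-2\alpha}$, and to specialize $\alpha=1/4$. At this value the right-hand function is $\widehat Q_{-1/4}^{-1/2}$, whose order $-1/2$ is a half-odd-integer, so by \cite[\S\,14.5(iii)]{Olver2010} it is elementary. I would take the trigonometric form of $I_6'(\overline{ii})$ from Corollary~\ref{cor:i6p} and then apply $\xbar{\rm P}_\nu^\mu(z)={\rm P}_\nu^\mu(-z)$ with the substitution $\theta\mapsto\pi-\theta$, which transfers the identity to ${\rm P}_{-1/6}^{-1/4}(\cos\theta)$ and turns the $\widehat Q$-argument into $R=\sqrt3\,\cot(\theta/3)>1$ for $\theta\in(0,\pi)$. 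Writing $R=\cosh\eta$, one has $\widehat Q_{-1/4}^{-1/2}(\cosh\eta)\propto(\sinh\eta)^{-1/2}\,{\rm e}^{-\eta/4}$, up to a constant involving $\Gamma(1/4)$.

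The algebraic heart of this step is the simplification of $\sinh\eta$ and ${\rm e}^{-\eta/4}$. From $\cosh\eta=\sqrt3\,\cot(\theta/3)$ and the triple-angle identity $\sin\theta=\sin(\theta/3)\,[\,4\cos^2(\theta/3)-1\,]$ one finds $\sinh^2\eta=\cosh^2\eta-1=\sin\theta/\sin^3(\theta/3)$, and hence
\begin{displaymath}
{\rm e}^{-\eta}=\cosh\eta-\sinh\eta=\frac{\sqrt3}{\sin(\theta/3)}\left[\cos(\theta/3)-\sqrt{\frac{\sin\theta}{3\sin(\theta/3)}}\,\right].
\end{displaymath}
Raising to the power $1/4$ produces precisely the bracketed radical of the theorem, while the various powers of $\sin(\theta/3)$ coming from $(\sinh\eta)^{-1/2}$, from the prefactor $\sqrt{\cos(\pi/6)/\sin(\theta/3)}$, and from ${\rm e}^{-\eta/4}$ combine to leave the single factor $(\sin\theta)^{-1/4}$. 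The constant $3^{3/4}\Gamma(5/4)^{-1}$ then emerges after collecting the powers of~$3$ and reducing the gamma factors (here $\Gamma(1/4),\Gamma(3/4)$) by the duplication and reflection formulas. This gives the first display.

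The second display, for the Legendre function $P_{-1/6}^{-1/4}(\cosh\xi)$, I would obtain from the first by analytic continuation off $(-1,1)$ onto $(1,\infty)$, using the Ferrers--Legendre relation~(\ref{eq:ferrersp}) together with the general continuation principle of \S\,\ref{sec:mainresults}: under $\theta={\rm i}\xi$ one has $\cos(\theta/3)\to\cosh(\xi/3)$ and $\sin\theta/(3\sin(\theta/3))\to\sinh\xi/(3\sinh(\xi/3))$, so the radical continues (with the sign reversal inside the bracket) to the stated hyperbolic form. The third display, for $\widehat Q_{-1/4}^{-1/3}(\coth\xi)$, then follows from the second by Whipple's transformation: the corollary identity $W_2(i)$ of Theorem~\ref{thm:w2}, taken with $\alpha=1/3$ and $\beta=1/4$, relates $P_{-1/6}^{-1/4}(\cosh\xi)$ to $\boldsymbol Q_{-1/4}^{-1/3}(\coth\xi)$; rewriting $\boldsymbol Q_{-1/4}^{-1/3}$ as $\widehat Q_{-1/4}^{-1/3}/\Gamma(5/12)$ and inserting the second display yields the third, with the prefactor assembling into $C=3^{3/4}\sqrt{\pi/2}\,\Gamma(5/12)/\Gamma(5/4)$, whose alternative form follows once more from the gamma identities.

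I expect the main obstacle to be neither the choice of identities nor the triple-angle reduction, but the careful tracking of branches and constant factors. The fractional powers $(\cdot)^{1/4}$ and $(\cdot)^{1/2}$ require fixing consistent branches so that both sides are real and positive on the stated intervals; in particular the sign reversal inside the bracket during the continuation $\theta\mapsto{\rm i}\xi$, and the phase factors ${\rm e}^{\pm\mu\pi{\rm i}/2}$ in~(\ref{eq:ferrersp}), must be shown to cancel against the branch of the radical so as to leave a real second display. Verifying that the accumulated constants---powers of $2$ and $3$ together with $\Gamma(1/4),\Gamma(3/4),\Gamma(5/12),\Gamma(5/4)$---collapse exactly to the stated $3^{3/4}\Gamma(5/4)^{-1}$ and $C$ is routine but unforgiving, and is most safely confirmed by matching leading asymptotics as $\theta\to0$ (resp.\ $\xi\to0$) against~(\ref{eq:asympPFerrers}) and~(\ref{eq:asympQhat}).
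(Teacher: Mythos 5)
Your proposal follows the paper's own proof in every essential: you specialize $I_6'(\overline{ii})$ at $\alpha=1/4$ so that the right-hand function becomes the elementary $\widehat Q_{-1/4}^{-1/2}$ of \cite[\S\,14.5(iii)]{Olver2010}, use $\xbar{\rm P}_{-1/6}^{-1/4}(\cos\theta)={\rm P}_{-1/6}^{-1/4}(-\cos\theta)$ with a reflection of $\theta$ to pass to ${\rm P}_{-1/6}^{-1/4}$, continue analytically via $\theta={\rm i}\xi$ to get the Legendre-function display, and finish with Whipple's transformation --- exactly the paper's route. Your algebraic core also checks out: $\sinh^2\eta=\sin\theta/\sin^3(\theta/3)$, the expression for ${\rm e}^{-\eta}$, the exact cancellation of the powers of $\sin(\theta/3)$, and the collapse of the gamma factors to $3^{3/4}\Gamma(5/4)^{-1}$ are all correct.

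One concrete point needs fixing, however: your Whipple step does \emph{not} reproduce the third display as printed. Identity $W_2(i)$ with $(\alpha,\beta)=(1/3,1/4)$ gives
\begin{displaymath}
\widehat Q_{-1/4}^{-1/3}(\coth\xi)=\sqrt{\pi/2}\,\Gamma(5/12)\,(\sinh\xi)^{1/2}\,P_{-1/6}^{-1/4}(\cosh\xi),
\end{displaymath}
and inserting the second display then yields $C\,(\sinh\xi)^{+1/4}\bigl[-\cosh(\xi/3)+\sqrt{\sinh\xi/(3\sinh(\xi/3))}\,\bigr]^{1/4}$: the exponent of $\sinh\xi$ comes out as $+1/4$, not the $-1/4$ appearing in the theorem, so your assertion that this ``yields the third'' display is off by a factor $(\sinh\xi)^{1/2}$. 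The asymptotic cross-check you yourself propose would have exposed this: as $\xi\to0^+$, equation~(\ref{eq:asympQhat}) gives $\widehat Q_{-1/4}^{-1/3}(\coth\xi)\sim\sqrt{\pi}\,\Gamma(5/12)\,\xi^{3/4}/\bigl(2^{3/4}\Gamma(5/4)\bigr)$, while the bracket behaves like $\xi^2/54$, and these are consistent only with the exponent $+1/4$. In other words, your derivation is sound and in fact proves the corrected formula; the printed display contains an error in that exponent, and a blind proof attempt should flag the discrepancy rather than claim literal agreement with the statement.
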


\begin{remark*}
  To obtain explicit formulas when the degree and order differ by integers
  from those shown in this theorem, one would apply differential
  recurrences, as in the last section.
\end{remark*}

\begin{proof}
  An explicit formula for $\widehat Q_{-1/4}^{-1/2}(\cosh\xi)$ follows from
  \cite[eq.~14.5.17]{Olver2010}.  In algebraic rather than trigonometric
    form, it is
    \begin{equation}
      \widehat Q_{-1/4}^{-1/2}(z)= {\rm i}\,Q_{-1/4}^{-1/2}(z)= 4\sqrt{{\pi}/2} \left[ (z^2-1)^{-1}
        (z-\sqrt{z^2-1}) \right]^{1/4}, \qquad z>1.
    \end{equation}
    Substituting this into the right side of the $\alpha=1/4$ case
    of~$I_6'(\overline{ii})$, and performing some lengthy trigonometric
    manipulations, yields a formula for $\xbar{\rm
      P}_{-1/6}^{-1/4}(\cos\theta) = {\rm P}_{-1/6}^{-1/4}(-\cos\theta)$,
    which upon $\theta$~being replaced by $\theta+\pi$, becomes the one for
    ${\rm P}_{-1/6}^{-1/4}(\cos\theta)$ given in the theorem.

    The formula for ${P}_{-1/6}^{-1/4}(\cosh\xi)$ comes by analytic
    continuation (informally, by setting $\theta={\rm i}\,\xi$).  The one
    for $\widehat Q_{-1/4}^{-1/3}(\coth\xi)$, with the first value given
    for the prefactor~$C$, then comes by applying Whipple's transformation.
    The equality of the two values given for~$C$ comes from a
    gamma-function identity~\cite[p.~270]{Vidunas2005a}.
\end{proof}

The formulas of Theorem~\ref{prop:71} can be written in algebraic form,
since $\cosh(\xi/3),\sinh(\xi/3)$ are algebraic functions of
$\cosh\xi$,~etc.  The significance of ${\rm P}_{-1/6}^{-1/4}(z)$,
${P}_{-1/6}^{-1/4}(z)$ and~$\widehat{Q}_{-1/4}^{-1/3}(z)$ being elementary
functions of~$z$, expressible in~terms of radicals, is the following.
Legendre's differential equation~(\ref{eq:ode}) on the Riemann
sphere~$\mathbb{P}^1$ is of the `hypergeometric' sort, with only three
singular points, $z=\pm1$ and~$z=\infty$; and their respective
characteristic exponent differences are~$\mu,\mu,2\nu+1$.  It is a
classical result of Schwarz (see \cite[\S\,2.7.2]{Erdelyi53},
\cite[Chap.~VII]{Poole36} and~\cite{Kimura69}) that for a differential
equation of the hypergeometric sort to have \emph{only algebraic
  solutions}, its unordered triple of exponent differences must be one of
$15$~types, traditionally numbered I--XV\null.  The case when
$(\nu,\mu)=(-1/4,-1/3)$ and $(\mu,\mu,2\nu+1)=(-1/3,-1/3,1/2)$ is of
Schwarz's type~II, and the case when $(\nu,\mu)=(-1/6,-1/4)$ and
$(\mu,\mu,2\nu+1)=(-1/4,-1/4,2/3)$ is of type~V\null.

For each type in Schwarz's list, there is a (projective) monodromy group:
the group of permutations of the branches of an algebraic solution that is
generated by loops around the three singular points.  (Strictly speaking,
the algebraic function here is not a solution of the equation, but the
ratio of any independent pair of solutions; which is the import of the term
`projective.')  For Schwarz's types II and~V, the respective groups are
tetrahedral and octahedral: they are isomorphic to the symmetry groups of
the tetrahedron and octahedron, which are of orders $12$ and~$24$.  It is
no~accident that as an algebraic function of $\cos\theta$ or $\cosh\xi$,
each right side in Theorem~\ref{prop:71} has a multiple of $12$ branches.

An interesting consequence of the formula for $P_{-1/6}^{-1/4}$ is a
formula in terms of radicals for an octahedral case of the Gauss
hypergeometric function,~${}_2F_1$.  Taking into account the relation
\begin{equation}
P_\nu^\mu(z) = \frac1{\Gamma(1-\mu)}\,\left(
\frac{z+1}{z-1}\right)^{\mu/2} {}_2F_1\left(-\nu,\nu+1;\,1-\mu;\,\frac{1-z}2\right),
\end{equation}
and using Cardano's formula to solve for $\cosh(\xi/3),\sinh(\xi/3)$ in
terms of $z=\cosh\xi$, one deduces
\begin{equation}
\begin{gathered}
{}_2F_1(\tfrac16,\tfrac56;\,\tfrac54;\,x) =
3^{3/4}(-2x)^{-1/4}
\left[
-\,\frac{A^{1/3}+A^{-1/3}}2 + \sqrt{ \frac{1+A^{2/3}+A^{-2/3}}3 }
\,\right]^{1/4},
\\
A = \bigl( \sqrt{-2x} + \sqrt{-2(x-1)}\, \bigr)^2 \!\bigm/ 2.
\end{gathered}
\label{eq:octahedral}
\end{equation}
This holds when $x<0$, and in fact on the complex $x$-plane with cut
$[1,\infty)$, on which the left side is analytic in~$x$; provided, that~is,
  that the branch of each radical is appropriately chosen.

It has long been known how to obtain \emph{parametric} expressions for
algebraic hypergeometric functions \cite[Chap.~VII]{Poole36}, and a
parametric formula for ${}_2F_1(\tfrac16,\tfrac56;\,\tfrac54;\,x)$ has
recently been derived \cite[eq.~(2.8)]{Vidunas2013}.  But the explicit
formula~(\ref{eq:octahedral}) may be more useful.  It does not appear in
the best-known data base of closed-form expressions for hypergeometric
functions \cite{Prudnikov86c}, or in the data base generated by
Roach~\cite{Roach96}, which is currently available
at~\texttt{www.planetquantum.com}.

\section{A curiosity}
\label{sec:curiosity}

Until this point, each Legendre transformation formula derived in this
paper has been related at~least loosely to the function transformations in
Ramanujan's theory of signature-$r$ elliptic integrals.  More exotic
Legendre transformations exist, as the curious theorem and corollary below
reveal.  They relate ${\rm P}_{-1/4}^{-1/10}$ to ${\rm P}_{-1/4}^{-1/5}$
(or~${P}_{-1/4}^{-1/5}$), despite neither of these functions being an
algebraic function of its argument, expressible in~terms of complete
elliptic integrals, or indeed (by~results of Kimura~\cite{Kimura69})
expressible at~all in~terms of elementary functions and their integrals.

\begin{definition*}
  The algebraic $L$--$R$ curve $\mathcal{X}$ is defined by the rational
  parametrization
  \begin{equation}
    \label{eq:lastcovering}
    \begin{aligned}
      L &= \frac{1-p^2}{1+p^2}=1-\frac{2p^2}{1+p^2} = -1 + \frac2{1+p^2},\\
      R &= 1-\frac{2p(2+p)^5}{(1+p^2)(1+11p-p^2)^2}
      = -1+\frac{2(1-2p)^5}{(1+p^2)(1+11p-p^2)^2},
    \end{aligned}
  \end{equation}
  and is invariant under $(L,R)\mapsto (-L,-R)$, which is performed by
  $p\mapsto-1/p$.  An associated prefactor function $A=A(p)$, equal to
  unity when $p=0$ and $(L,R)=(1,1)$, is
  \begin{displaymath}
    A(p) = \sqrt{\frac{(2+p)(1-2p)}{2(1+11p-p^2)}}.
  \end{displaymath}
\end{definition*}

\begin{theorem}
  \label{thm:x}
  For each pair $u,v$ of Legendre or Ferrers functions listed below, an
  identity
  \begin{displaymath}
    u_{-1/4}^{-1/10}(L(p)) = \frac{\Gamma(6/5)}{\sqrt2\,\Gamma(11/10)}\, A(p)\, v_{-1/4}^{-1/5}(R(p))
  \end{displaymath}
  of type\/ $X$, coming from the curve\/ $\mathcal{X}$, holds for the
  specified range of values of the parameter\/ $p$.

\medskip\medskip
\begin{tabular}{llllll}
\hline
Label &$u_{-1/4}^{-1/10}$ &$v_{-1/4}^{-1/5}$ &$p$ range & $L$ range & $R$ range\\
\hline
\hline
(i) & ${\rm P}_{-1/4}^{-1/10}$ & ${\rm P}_{-1/4}^{-1/5}$ & $(0,\frac12)$ & $1>L>\frac35$ & $1>R>-1$ \\
\hline
(ii) & ${\rm P}_{-1/4}^{-1/10}$ & ${P}_{-1/4}^{-1/5}$ & $(-\frac12(5\sqrt5-11),0)$ & $11/(5\sqrt5)<L<1$ & $\infty>R>1$ \\
\hline
\end{tabular}
\end{theorem}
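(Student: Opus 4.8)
The plan is to prove Theorem~\ref{thm:x} by the lifting technique of \S\ref{sec:derivation}, exactly as in the proofs of Theorems~\ref{thm:m}, \ref{thm:w2} and~\ref{thm:w4}. Here both sides have degree $\nu=-1/4$, but the orders differ: the left function has order $\mu=-1/10$ and the right function order $\mu=-1/5$. I would lift Legendre's equation~(\ref{eq:ode}) with $(\nu,\mu)=(-1/4,-1/10)$ along the degree-$2$ covering $L=L(p)$ to obtain an equation $\mathcal{E}_L$ on the $p$-sphere, and separately lift~(\ref{eq:ode}) with $(\nu,\mu)=(-1/4,-1/5)$ along $R=R(p)$, following this with the linear change of dependent variable supplied by the prefactor $A(p)=\sqrt{(2+p)(1-2p)/[2(1+11p-p^2)]}$, to obtain $\mathcal{E}_R$. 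The crux, as always, is $\mathcal{E}_L=\mathcal{E}_R$; once this is known, each listed pair $u,v$ gives an identity because $u(L(p))$ and $A(p)\,v(R(p))$ are then the same Frobenius solution of the common equation.

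First I would carry out the preliminary P-symbol (exponent-difference) check. The map $L=(1-p^2)/(1+p^2)$ has $L^{-1}(1)=\{0\}$ and $L^{-1}(-1)=\{\infty\}$ each of multiplicity~$2$ and $L^{-1}(\infty)=\{\pm{\rm i}\}$ unramified, so $\mathcal{E}_L$ has singular points $p=0,\infty$ with exponent difference $2\cdot\tfrac1{10}=\tfrac15$ and $p=\pm{\rm i}$ with difference $\tfrac12$. The degree-$6$ map $R=R(p)$ has $R^{-1}(1)=\{0,-2\}$ (multiplicities $1,5$), $R^{-1}(-1)=\{\tfrac12,\infty\}$ (multiplicities $5,1$), and $R^{-1}(\infty)=\{\pm{\rm i},(11\pm5\sqrt5)/2\}$ (multiplicities $1,1,2,2$); hence $\mathcal{E}_R$ has the same genuine singular points $p=0,\infty$ (difference $\tfrac15$) and $p=\pm{\rm i}$ (difference $\tfrac12$), together with four extra points $p=-2,\tfrac12,(11\pm5\sqrt5)/2$ at which the exponent difference is the integer~$1$. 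These four are ordinary points of $\mathcal{E}_L$, so the exponent-difference data of the two equations coincide. That they are in fact \emph{apparent} singular points of $\mathcal{E}_R$, carrying no logarithmic solution, will follow once the equations are shown to be equal.

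The main obstacle is that this agreement is only necessary, not sufficient: with four true singular points (plus the apparent ones) the lifted equations are not pinned down by their exponents alone, so I would have to write out $\mathcal{E}_L$ and $\mathcal{E}_R$ explicitly as second-order equations in~$p$ and verify that they coincide term by term, including the accessory-parameter (zeroth-order coefficient) data. This is the heaviest part, made more laborious by $R$ being a degree-$6$ covering and by the irrational apparent points $(11\pm5\sqrt5)/2$; but since those occur as a conjugate pair, $\mathcal{E}_R$ has rational coefficients and the verification, though lengthy, is mechanical. The functional form of $A(p)$ is precisely what aligns the exponent \emph{pairs} (not merely their differences) at every singular point.

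Finally I would read off the identities and fix the constant. The real singular and apparent points partition the $p$-axis, and the two intervals abutting $p=0$, namely $(0,\tfrac12)$ and $((11-5\sqrt5)/2,0)$, give identities (i) and~(ii); in each case the table of monotone maps shows that $p=0$ is the common \emph{defining} end, where $(L,R)\to(1,1)$, the left argument yielding a Ferrers $\mathrm P$ and the right argument a Ferrers $\mathrm P$ (case~(i), $R\in(-1,1)$) or a Legendre $P$ (case~(ii), $R\in(1,\infty)$). To determine the overall factor I would compare leading behaviors at $p=0$ using~(\ref{eq:asympPFerrers}) and $A(0)=1$: from $1-L\sim 2p^2$ and $1-R\sim 64p$ one gets ${\rm P}_{-1/4}^{-1/10}(L)\sim p^{1/10}/\Gamma(11/10)$ and ${\rm P}_{-1/4}^{-1/5}(R)\sim\sqrt2\,p^{1/10}/\Gamma(6/5)$, so that $\Gamma(6/5)/[\sqrt2\,\Gamma(11/10)]$ is exactly the constant making the two sides agree to leading order, which completes the proof.
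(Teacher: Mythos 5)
Your proposal is correct and takes essentially the same approach as the paper's own proof: lift Legendre's equation along the coverings $L(p)$ and $R(p)$ (with the prefactor $A(p)$ aligning the exponent pairs), recognize that equality of the two lifted equations must be verified explicitly because the genuine singular points $p=0,\infty,\mathrm{i},-\mathrm{i}$ number more than three, read off identities (i) and (ii) from the two real $p$-intervals abutting $p=0$, and fix the constant $\Gamma(6/5)/[\sqrt2\,\Gamma(11/10)]$ by comparing leading asymptotics at $p=0$. The paper's proof likewise records the common lifted equation explicitly (its equation~(\ref{eq:lastode})) and treats $p=-2,\tfrac12,\tfrac12(11\pm5\sqrt5)$ as singular points that have ``disappeared'' upon lifting, exactly the structure you anticipated.
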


\smallskip
To construct trigonometric versions of these identities, one substitutes
$L=\cos\theta$ into the parametrization, obtaining $p=\pm\tan(\theta/2)$,
which can be used for $X(i)$ and $X(ii)$ respectively, and then writes $R$
in terms of~$\theta$.  This yields the following.

\begin{corollary}
  The following identities coming from\/ $\mathcal{X}$ hold when\/
  $\theta\in(0,\tan^{-1}(4/3))$ and\/ $\theta\in(0,\tan^{-1}(2/11))$,
  respectively.
  \begin{align*}
    &X(i):\quad 
    {\rm P}_{-1/4}^{-1/10}(\cos\theta)
    = C\sqrt{\frac{4\cos\theta - 3\sin\theta}{2\cos\theta + 11\sin\theta}}\,
    {\rm P}_{-1/4}^{-1/5}\left(
    1 - 8\,\frac{[2\cos(\theta/2) + \sin(\theta/2)]^5}{(2\cos\theta+11\sin\theta)^2}\sin(\theta/2)
    \right)
;&
\\
    &X(ii):\quad
    {\rm P}_{-1/4}^{-1/10}(\cos\theta)
    = C\sqrt{\frac{4\cos\theta + 3\sin\theta}{2\cos\theta - 11\sin\theta}}\,
    {P}_{-1/4}^{-1/5}\left(
    1 + 8\,\frac{[2\cos(\theta/2) - \sin(\theta/2)]^5}{(2\cos\theta-11\sin\theta)^2}\sin(\theta/2)
    \right)
.&
  \end{align*}
  In both, the constant prefactor\/ $C$ equals
  $\Gamma(6/5)/\,{2}\Gamma(11/10)$.
\end{corollary}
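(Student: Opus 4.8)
The plan is to apply the lifting technique introduced in \S\ref{sec:suppl} and used throughout \S\ref{sec:derivation}. Both sides satisfy Legendre's equation~\eqref{eq:ode} of degree $\nu=-1/4$, the left-hand functions with order $-1/10$ (i.e.\ $\mu=1/10$) and the right-hand functions with order $-1/5$ (i.e.\ $\mu=1/5$). First I would lift this equation along the covering $L=L(p)$ of~\eqref{eq:lastcovering}, which is a degree-two map ramified only over $L=1$ (at $p=0$, since $1-L\sim2p^2$) and $L=-1$ (at $p=\infty$, since $L+1\sim2/p^2$), to obtain an equation $\mathcal{E}_L$ on the $p$-sphere; and separately along $R=R(p)$, a degree-six map, following that lift by the change of dependent variable $\tilde u\mapsto A(p)\tilde u$ to obtain an equation I shall call $\mathcal{E}_R$.

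The exponent bookkeeping runs as follows. Because $L$ ramifies with index two over each of $L=\pm1$, the equation $\mathcal{E}_L$ has singular points $p=0,\infty$ with exponent difference $2\cdot\tfrac1{10}=\tfrac15$, while the unramified fibre $p=\pm{\rm i}$ over $L=\infty$ gives two further singular points of exponent difference $\tfrac12$; there are no others. The map $R$ carries $p=0$ (simply) and $p=-2$ (with multiplicity five) to $R=1$, carries $p=\infty$ (simply) and $p=\tfrac12$ (with multiplicity five) to $R=-1$, and carries $p=\pm{\rm i}$ (simply) and the two roots of $1+11p-p^2$ (each with multiplicity two) to $R=\infty$. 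Hence, beyond the four points above, the bare lift acquires apparent singular points at $p=-2,\tfrac12$, where the exponents are $5\cdot\{\pm\tfrac1{10}\}=\{\pm\tfrac12\}$, and at the roots of $1+11p-p^2$, where they are $2\cdot\{\tfrac14,\tfrac34\}=\{\tfrac12,\tfrac32\}$. The design feature of the curve $\mathcal{X}$ is that $A(p)$ has a half-order zero at $p=-2,\tfrac12$ and a half-order pole at those roots, so the prefactor shifts these three exponent pairs precisely to $\{0,1\}$: after it is applied all four extra points become ordinary, and $\mathcal{E}_R$ has the same P-symbol as $\mathcal{E}_L$, with $A(0)=1$ ensuring that the exponents $\{\pm\tfrac1{10}\}$ at $p=0$ also agree.

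Matching P-symbols is necessary but, with four finite singular points and hence one accessory parameter, not sufficient. The decisive step---and the step I expect to be the main obstacle---is to write $\mathcal{E}_L$ and $\mathcal{E}_R$ out explicitly as second-order operators in $p$ and verify that they coincide term by term, accessory parameter included. Because $R(p)$ is a sextic covering, this computation is heavy, and it is exactly here that the special coefficients in~\eqref{eq:lastcovering}---the quintic factors $(2+p)^5,(1-2p)^5$ and the quadratic $1+11p-p^2$---must conspire. I would organize it by reducing both operators to the normal form carried by the four regular singular points $0,\infty,{\rm i},-{\rm i}$ and comparing the single free coefficient.

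Once $\mathcal{E}_L=\mathcal{E}_R$ is confirmed, the remainder is routine. The tabulated ranges show that on $(0,\tfrac12)$ both $L$ and $R$ lie in $(-1,1)$, yielding the Ferrers--Ferrers identity $X(i)$, whereas on $(-\tfrac12(5\sqrt5-11),0)$ one has $L\in(-1,1)$ but $R>1$, yielding the Ferrers--Legendre identity $X(ii)$; in both cases the defining point $R=1$ (equally $L=1$) sits at the shared endpoint $p=0$, so $u_{-1/4}^{-1/10}(L(p))$ and $A(p)\,v_{-1/4}^{-1/5}(R(p))$ are the same Frobenius solution up to a scalar. To fix that scalar I would compare leading behavior at $p=0$. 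For $X(i)$ both sides are Ferrers functions, and by~\eqref{eq:asympPFerrers}, from $1-L\sim2p^2$ the left side behaves as $p^{1/10}/\Gamma(11/10)$, while from $1-R\sim 2^6p$ the factor ${\rm P}_{-1/4}^{-1/5}(R)$ behaves as $\sqrt2\,p^{1/10}/\Gamma(6/5)$; since $A(0)=1$, equality forces the constant prefactor to be $\Gamma(6/5)/\sqrt2\,\Gamma(11/10)$, as stated. Identity $X(ii)$, whose right-hand Legendre factor is handled instead by~\eqref{eq:asympP}, follows from the same match together with the analytic-continuation principle of \S\ref{sec:mainresults}.
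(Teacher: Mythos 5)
Your proposal is correct and is essentially the paper's own argument: the paper proves the rationally parametrized Theorem~\ref{thm:x} exactly as you do (lifting along $L(p)$ and $R(p)$, identifying the singular points $p=0,\infty,\mathrm{i},-\mathrm{i}$ with exponent differences $\tfrac15,\tfrac15,\tfrac12,\tfrac12$ while the points over $R=\pm1,\infty$ of higher multiplicity are neutralized by the prefactor $A(p)$, verifying $\mathcal{E}_L=\mathcal{E}_R$ explicitly because the shared P-symbol with four singular points is not sufficient, reading off $X(i),X(ii)$ from the monotone $p\mapsto L,R$ table, and fixing the scalar by asymptotics at $p=0$), and then deduces the corollary by substituting $p=\pm\tan(\theta/2)$. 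The only step you leave implicit is that final substitution, under which $A(p)$ becomes $\tfrac1{\sqrt2}\sqrt{(4\cos\theta-3\sin\theta)/(2\cos\theta+11\sin\theta)}$, so that your constant $\Gamma(6/5)/\bigl(\sqrt2\,\Gamma(11/10)\bigr)$ absorbs the extra $1/\sqrt2$ and yields the corollary's stated $C=\Gamma(6/5)/\bigl(2\,\Gamma(11/10)\bigr)$.
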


\begin{proof}[Proof of Theorem~\ref{thm:x}]
This resembles the proofs in~\S\,\ref{sec:derivation} of the main results,
and will only be sketched.  On the curve $\mathcal{X}$ viewed as the
$p$-sphere, the equation $\mathcal{E}_L=\mathcal{E}_R$ has singular points
$p=0,\infty,{\rm i},-{\rm i}$, which are the vertices of a square.  The
respective exponent differences are $\frac15,\frac15,\frac12,\frac12$.  The
lifted equations $\mathcal{E}_L,\mathcal{E}_R$ for $\tilde u=\tilde
u(p)=u(L(p))$ and $A(p)v(R(p))$ both take the form
\begin{equation}
\label{eq:lastode}
\frac{{\rm d}^2\tilde u}{{\rm d}p^2}
+
\frac1p
\frac{{\rm d}\tilde u}{{\rm d}p}
-
\left[
\frac1{100p^2} + \frac{3}{4(p^2+1)^2}
\right]\tilde u
=0,
\end{equation}
by a separate computation.  The real $p$-intervals and monotonic $p\mapsto
L,R$ maps are tabulated as
\begin{equation}
\label{eq:lasttabular}
\begin{tabular}{c||ccccccc}
$p$ & $-\infty$ & $-2$ & $-\frac12(5\sqrt5-11)$ & $0$ & $\frac12$ & $\frac12(5\sqrt5+11)$ & $+\infty$ \\
\hline
$L(p)$ & $-1^*$ & $-\frac35$ & $11/(5\sqrt5)$ & $1^*$ & $\frac35$ & $-11/(5\sqrt5)$ &
$-1^*$
\\
$R(p)$ & $-1$ & $1$ & $+\infty^*$ & $1$ & $-1$ &
$-\infty^*$ & $-1$.
\end{tabular}
\end{equation}
The $p$-interval $(0,\frac12)$ yields the identity $X(i)$, relating ${\rm
  P},{\rm P}$, and the $p$-interval $(-\frac12(5\sqrt5-11),0)$ yields
$X(ii)$, relating ${\rm P},{P}$.  The $p$-interval $(-\infty,-2)$ also
yields an identity, but this third identity, relating $\xbar{\rm
  P},\xbar{\rm P}$, is related to~$X(i)$ by $(L,R)\mapsto-(L,R)$, which is
performed by~$p\mapsto-1/p$.

From the covering map $R=R(p)$ of~(\ref{eq:lastcovering}) and the
table~(\ref{eq:lasttabular}), one would expect that ${p=-2}$
(in~$R^{-1}(1)$), $p=\frac12$ (in~$R^{-1}(-1)$) and
$p=\mp\frac12(5\sqrt5\mp11)$ (in~$R^{-1}(\infty)$) would also be singular
points of $\mathcal{E}_L=\mathcal{E}_R$.  But they are ordinary points,
i.e.\ singular points that have `disappeared' upon lifting, as explained
in~\S\,\ref{sec:suppl}, on account of their characteristic exponents
being~$0,1$.

Since the cardinality of the set of singular points $\{0,\infty,{\rm
  i},-{\rm i}\}$ is greater than three, the equality of
$\mathcal{E}_L,\mathcal{E}_R$ cannot be verified by an (easy) comparison of
their P-symbols; that they are equal because both are of the
form~(\ref{eq:lastode}) must be checked explicitly.
\end{proof}

The rather mysterious algebraic curve~$\mathcal{X}$ was discovered
heuristically; although, as one conjectures from a close examination of the
resulting identities $X(i),X(ii)$, their existence is in some way tied to
the equations $3^2+4^2=5^2$ and $2^2+11^2=5^3$.  The existence of other
exotic algebraic curves that lead to Legendre or Ferrers identities will be
explored elsewhere.




\end{document}